\documentclass[reqno, final,a4paper]{amsart}
\usepackage[notref,notcite]{showkeys}
\usepackage{color}
\usepackage[colorlinks=true,allcolors=blue
,backref=page
]{hyperref}
\usepackage{amsmath, amssymb, amsthm}
\usepackage{mathrsfs}
\usepackage{mathtools}
\usepackage[noabbrev,capitalize,nameinlink]{cleveref}
\crefname{equation}{}{}
\usepackage{fullpage}
\usepackage[noadjust]{cite}
\usepackage{graphics}
\usepackage{pifont}
\usepackage{tikz}
\usepackage{bbm}
\usepackage[T1]{fontenc}
\usetikzlibrary{arrows.meta}

\usepackage{environ}
\usepackage{framed}
\usepackage{url}
\usepackage[linesnumbered,ruled,vlined]{algorithm2e}
\usepackage[noend]{algpseudocode}
\usepackage[labelfont=bf]{caption}
\usepackage{framed}
\usepackage[framemethod=tikz]{mdframed}
\usepackage{pgfplots}
\usepackage{appendix}
\usepackage{graphicx}
\usepackage[textsize=tiny]{todonotes}
\usepackage{tcolorbox}
\usepackage[shortlabels]{enumitem}
\crefformat{enumi}{#2#1#3}
\crefrangeformat{enumi}{#3#1#4 to~#5#2#6}
\crefmultiformat{enumi}{#2#1#3}
{ and~#2#1#3}{, #2#1#3}{ and~#2#1#3}
\allowdisplaybreaks
\pgfplotsset{compat=1.18}

\let\originalleft\left
\let\originalright\right
\renewcommand{\left}{\mathopen{}\mathclose\bgroup\originalleft}
\renewcommand{\right}{\aftergroup\egroup\originalright}

\crefname{algocf}{Algorithm}{Algorithms}

\crefname{equation}{}{} 
\AtBeginEnvironment{appendices}{\crefalias{section}{appendix}} 


\crefname{algocf}{Algorithm}{Algorithms}

\numberwithin{equation}{section}
\newtheorem{theorem}{Theorem}[section]

\newtheorem{lemma}[theorem]{Lemma}
\newtheorem{claim}[theorem]{Claim}
\crefname{claim}{Claim}{Claims}

\newtheorem*{question*}{Question}

\crefname{conjecture}{Conjecture}{Conjectures}
\newtheorem*{theorem*}{Theorem}

\theoremstyle{definition}
\newtheorem{definition}[theorem]{Definition}

\newtheorem*{definition*}{Definition}

\theoremstyle{remark}
\newtheorem{remark}[theorem]{Remark}
\newtheorem*{remark*}{Remark}


\newcommand{\floor}[1]{\left\lfloor #1 \right\rfloor}
\newcommand{\ceil}[1]{\left\lceil #1 \right\rceil}

\DeclareMathOperator{\e}{\mathrm{e}}
\DeclareMathOperator{\vol}{\mathrm{vol}}

\newcommand{\mb}{\mathbb}

\newcommand{\mc}{\mathcal}

\newcommand{\mr}{\mathrm}

\newcommand{\ol}{\overline}
\newcommand{\on}{\operatorname}

\renewcommand{\Pr}{\mb P}

\DeclareMathOperator{\cE}{\mathcal{E}}
\DeclareMathOperator{\cF}{\mathcal{F}}
\DeclareMathOperator{\cG}{\mathcal{G}}



\newcommand*{\claimproofname}{Proof of claim}
\newenvironment{claimproof}[1][\claimproofname]{\begin{proof}[#1]}{\end{proof}}

\setlist{itemsep=0.3em} 

\allowdisplaybreaks

\title{Colouring random Hasse diagrams and box-Delaunay graphs}

\author[Jin]{Zhihan Jin}

\address{Department of Mathematics, ETH, Z\"urich, Switzerland}
\email{zhihan.jin@math.ethz.ch}

\author[Kwan]{Matthew Kwan}

\address{Institute of Science and Technology Austria (ISTA). Am Campus 1, 3400 Klosterneuburg, Austria.}
\email{matthew.kwan@ist.ac.at}

\author[Lichev]{Lyuben Lichev}
\address{Institute of Science and Technology Austria (ISTA). Am Campus 1, 3400 Klosterneuburg, Austria.}
\email{lyuben.lichev@ist.ac.at}

\begin{document}

\begin{abstract}
Fix $d\ge2$ and consider a uniformly random set $P$ of $n$ points in $[0,1]^{d}$. Let $G$ be the Hasse diagram of $P$ (with respect to the coordinatewise partial order), or alternatively let $G$ be the Delaunay graph of $P$ with respect to axis-parallel boxes (where we put an edge between $u,v\in P$ whenever there is an axis-parallel box containing $u,v$ and no other points of $P$).

In each of these two closely related settings, 
we show that the chromatic number of $G$ is typically $(\log n)^{d-1+o(1)}$
and the independence number of $G$ is typically $n/(\log n)^{d-1+o(1)}$.
When $d=2$, we obtain bounds that are sharp up to constant factors:
the chromatic number is typically of order $\log n/\log\log n$ and
the independence number is typically of order $n\log\log n/\log n$.

These results extend and sharpen previous bounds by Chen, Pach, Szegedy
and Tardos. In addition, they provide new bounds on the largest possible
chromatic number (and lowest possible independence number) of a $d$-dimensional
box-Delaunay graph or Hasse diagram, in particular resolving a conjecture
of Tomon.
\end{abstract}

\maketitle

\section{Introduction}
\newcommand{\Pp}{P_{\mr{Po}}}

In this paper, we consider two related types of graphs arising
from quite different contexts. 
First, there is a vast literature on
graphs defined via geometric configurations; one important class of
such graphs (of particular significance in computational geometry, see e.g.\ \cite{H-PS03}) is the class of \emph{Delaunay graphs}.
\begin{definition*}
Let $\mathcal{C}$ be a family of $d$-dimensional convex bodies in
$\mb R^{d}$, and consider a set of points $P\subseteq\mb R^{d}$. The\emph{
Delaunay graph} $D_{\mathcal{C}}(P)$ of $P$ with respect to $\mathcal{C}$
is the graph on the vertex set $P$ with an edge between $p,q\in P$
whenever there is $C\in\mathcal{C}$ with $C\cap P=\{p,q\}$ (i.e.,
we put an edge between $p$ and $q$ if one of our convex bodies contains
$p$ and $q$ but no other point of $P$).
\end{definition*}
Most famously, if $d=2$ and $\mathcal{C}$ is the family of balls
in $\mb R^{2}$, then $D_{\mathcal{C}}(P)$ is the graph of the \emph{Delaunay
triangulation} of $P$, which is in a certain sense the ``dual''
of the Voronoi diagram of $P$. After balls, perhaps the second
most important case is where $\mathcal{C}$ is the set of axis-parallel
boxes (i.e., the family of all sets of the form $[a_{1},b_{1}]\times\dots\times[a_{d},b_{d}]\subseteq \mb R^d$).
In the case $d=2$, such graphs are sometimes called \emph{rectangular
visibility graphs}; in this paper, we use the term \emph{box-Delaunay
graphs} to describe Delaunay graphs with respect to axis-parallel boxes of any dimension.

\medskip On the other hand, \emph{Hasse diagrams} (also known as \emph{covering
graphs}) are an important class of graphs defined in terms of partial
orders.
\begin{definition*}
Let $(P,\preceq)$ be a partially ordered set (poset). The \emph{Hasse
diagram} $H_{\preceq}(P)$ of this partial order is the graph on the
vertex set $P$ with an edge between $p,q\in P$ whenever we have
$p\preceq q$ and there is no $r\in P$ with $p\preceq r\preceq q$
(i.e., we put an edge between $p$ and $q$ if they are comparable
but there is no other element ``between'' $p$ and $q$).
\end{definition*}
It turns out that there is a very close relation between box-Delaunay
graphs and Hasse diagrams. Indeed, a set of points $P\subseteq\mb R^{d}$
naturally gives rise to a poset $(P,\preceq)$, where $p\preceq q$ if every coordinate of $q$ dominates the respective coordinate of $p$
(the posets that arise in
this way are precisely the posets with \emph{Dushnik--Miller dimension}
at most $d$). If we allow ourselves the freedom to reverse the order in some of the $d$ directions,
we can actually associate $2^{d}$
different posets to $P$, and the union of the Hasse diagrams of these
posets is precisely the box-Delaunay graph of $P$. In particular,
every Hasse diagram can be viewed as a subgraph of a box-Delaunay
graph of the same dimension.

\medskip For any class of graphs (especially those defined in terms of geometry
or order), one of the most fundamental questions is how large the
\emph{chromatic number} can be. It was first proved by Bollob\'as~\cite{Bol77}, in 1977, that Hasse diagrams can have arbitrarily large chromatic
number (this result was later independently proved by Ne\v set\v ril
and R\"odl~\cite{NR79}). Also, if an $n$-vertex graph has chromatic
number $\chi$, then it has an independent set of size at least $n/\chi$,
but the converse does not necessarily hold. So, a strengthening of
the fact that Hasse diagrams can have unbounded chromatic number is
that $n$-vertex Hasse diagrams can have independence number $o(n)$;
this was first proved by Brightwell and Ne\v set\v ril~\cite{BN91}.
Quantitative improvements to these results were later obtained by
Pach and Tomon~\cite{PT19} and by Suk and Tomon~\cite{ST21}.

None of the aforementioned results say anything about Hasse diagrams
(or box-Delaunay graphs) of \emph{fixed} dimension, but an ingenious
construction of K\v r\'i\v z and Ne\v set\v ril~\cite{KN91} shows
that even Hasse diagrams of dimension 2 (and therefore box-Delaunay
graphs of dimension 2) can have arbitrarily large chromatic number.
The K\v r\'i\v z--Ne\v set\v ril construction has very poor
quantitative aspects, and says nothing about the independence number;
both these aspects were later addressed in later work by Chen, Pach,
Szegedy and Tardos~\cite{CPST09} (answering a question of Matou\v sek and
P\v r\'iv\v etiv\'y~\cite{MP06}).

In fact, Chen, Pach, Szegedy and Tardos did not just consider worst-case behaviour; they were the first to consider questions of this type for \emph{random} point sets. 
They were able to obtain quite sharp estimates on the typical independence number for random point sets, as follows.
\begin{theorem*}
    Let $P\subseteq[0,1]^{2}$ be a uniformly random set of $n$ points in $[0,1]^{2}$, and let $G$ be either the box-Delaunay graph of $P$ or the Hasse diagram of $P$ (with respect to the natural coordinatewise partial order).
    Then, whp\footnote{We say an event holds \emph{with high probability}, or \emph{whp} for short, if it holds with probability $1-o(1)$ as $n\to\infty$.}, the chromatic number $\chi(G)$ and the independence number $\alpha(G)$ satisfy the inequalities
    \[  
        \frac{c\log n}{(\log\log n)^2}\le \chi(G),\qquad 
        \frac{cn\log \log n}{\log n\log\log\log n}\le \alpha(G)\le\frac{Cn(\log \log n)^2}{\log n},
    \]
for some absolute constants $c,C>0$.
\end{theorem*}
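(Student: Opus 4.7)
The theorem has three parts. The lower bound $\chi(G)\ge c\log n/(\log\log n)^2$ follows from the upper bound $\alpha(G)\le Cn(\log\log n)^2/\log n$ via $\chi(G)\ge n/\alpha(G)$, so the real work is in the two $\alpha$-bounds. Moreover, the Hasse diagram $H=H_{\preceq}(P)$ is an edge-subgraph of the box-Delaunay graph $D(P)$ (the latter being the union of the four Hasse diagrams arising from reflecting subsets of coordinate axes), hence $\alpha(D(P))\le\alpha(H)$: an upper bound on $\alpha(H)$ transfers to $\alpha(D(P))$, and a lower bound on $\alpha(D(P))$ transfers to $\alpha(H)$. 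The two principal tasks are thus the upper bound on $\alpha(H)$ and the lower bound on $\alpha(D(P))$.

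Both tasks begin with an edge-density calculation: for uniform $p\prec q\in[0,1]^2$, the probability that the rectangle $R_{p,q}$ contains none of the other $n-2$ uniform points integrates to $\Theta(\log n/n)$, so both $H$ and $D(P)$ have $\Theta(n\log n)$ expected edges and average degree $\Theta(\log n)$. A structural observation specific to $H$ is that it is \emph{triangle-free}: three pairwise comparable points necessarily form a chain $p\prec q\prec r$ with $q\in R_{p,r}$, which rules out the edge $pq$. (This fails for $D(P)$, where three points with pairwise disjoint bounding boxes can be pairwise box-separated.)

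For the lower bound on $\alpha(D(P))$: the Caro-Wei/greedy bound already gives $\Omega(n/\log n)$, so an extra factor of $\log\log n/\log\log\log n$ is what must be gained. I would apply an Ajtai-Koml\'os-Szemer\'edi/Shearer-type bound to each of the four triangle-free Hasse diagrams $H_1,\ldots,H_4$ whose union is $D(P)$, after first excising the small set of atypically high-degree vertices near the corners of the square; this yields an independent set in a single $H_i$ of size $\Omega(n\log\log n/\log n)$. A common independent set across all four $H_i$ (which is, by definition, an independent set in $D(P)$) is then extracted via an iterative restriction argument, at the cost of the $\log\log\log n$ factor that arises from the successive intersections.

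For the upper bound $\alpha(H)\le Cn(\log\log n)^2/\log n$, I would use a first-moment / union bound. Put $s=Cn(\log\log n)^2/\log n$; by exchangeability of the $n$ points it suffices to show $\binom{n}{s}\cdot\Pr[\{P_1,\ldots,P_s\}\text{ is independent in }H]=o(1)$. For $S:=\{P_1,\ldots,P_s\}$ to be independent, every covering pair of the induced poset on $S$ must have a \emph{blocker} from $P\setminus S$ in the corresponding rectangle. Conditioning on $S$, blocker events are independent across pairwise disjoint rectangles, and the plan is to produce, for any configuration of $S$, a sufficiently large family of disjoint witness rectangles (supplemented, if needed, by a Janson-type argument to handle overlapping rectangles) so that the product probability beats $\binom{n}{s}\le(en/s)^s$. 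The main obstacle is exactly this step: a single longest chain in $S$ produces only $\Theta(\sqrt s)$ disjoint rectangles, which is too few, so one must carefully combine long chains from many substructures of $S$ (for instance, from a partition of $[0,1]^2$ into thin horizontal strips of width $\approx(\log\log n)^2/\log n$, each of which contains $\Theta((\log\log n)^2)$ points of $S$ and a chain of length $\Theta(\log\log n)$), and then argue that blocker events in different strips are sufficiently independent for the per-strip probabilities to multiply.
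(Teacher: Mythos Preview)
Your reductions (chromatic from independence, Hasse as subgraph of box-Delaunay) and the edge-density and triangle-freeness observations are all correct, but both substantive parts diverge from the Chen--Pach--Szegedy--Tardos argument the paper describes, and each has a real gap.

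For the lower bound on $\alpha(D(P))$: CPST does not split $D(P)$ into four triangle-free Hasse diagrams. Rather (see the remark in \S2.1.2), one notes that $D(P)$ is itself $K_T$-free for an absolute $T$---indeed $K_5$-free in the plane, since five points sorted by first coordinate always contain a triple with monotone second coordinates, whose middle point lies in the bounding box of the outer pair---and applies Shearer's $K_T$-free bound $\alpha(G)=\Omega(n\log\bar d/(\bar d\log\log\bar d))$ directly with $\bar d=\Theta(\log n)$; the $\log\log\log n$ loss in the theorem is exactly the $\log\log\bar d$ in Shearer. Your ``iterative restriction'' is where your route breaks: after extracting a Shearer independent set $I_1$ in $H_1$, you have no handle on the average degree of $H_2[I_1]$ (an independent set for one quadrant's covering relation can be arbitrarily dense in another's), so a second Shearer application has nothing to work with.

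For the upper bound on $\alpha(H)$: your strip decomposition is effectively a \emph{single} round of the CPST scheme, and one round produces only $\exp(-O(k^2/n))$ for the probability that a fixed $k$-set is independent, which loses to $\binom nk$ for every $k=o(n)$. The key idea you are missing (sketched in \S2.2.1) is iterated digit exposure: sort points by first coordinate so the candidate set is a fixed $I\subseteq\{1,\dots,n\}$, set $L=n/k$, and reveal second coordinates one $L$-ary digit at a time. After $r-1$ digits the points occupy $L^{r-1}$ levels; within each level, disjoint consecutive pairs of $I$ yield mutually independent edge-chances, contributing a \emph{fresh} factor $\exp(-\Omega(k^2/n))$ at step $r$. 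These multiply over $\Theta(\log k/\log L)$ rounds to $\exp\bigl(-\Omega\bigl((k^2/n)\cdot\log k/\log(n/k)\bigr)\bigr)$, which does beat $\binom nk$ once $k$ is a large multiple of $n(\log\log n)^2/\log n$. Chains within strips cannot substitute for this iteration: any single-round scheme furnishes at most $O(k)$ disjoint witness pairs, each an edge with probability $O(k/n)$, and the product is still only $\exp(-O(k^2/n))$; neither Janson nor longer chains changes that exponent.
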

It is worth mentioning that there is an enormous literature on geometrically-defined random graphs (see for example the monograph \cite{Pen03}), but random box-Delaunay graphs have a rather different character than many of the standard notions of random geometric graphs, as they are curiously ``non-local'' (for example, if two points are very close in one coordinate, they are very likely to be connected by an edge, even if they are very far in their other coordinates).

As our first result, we improve both directions of the Chen--Pach--Szegedy--Tardos bounds, obtaining sharp results up to constant factors (and thereby obtaining new bounds on the maximum chromatic number and minimum independence number of an $n$-vertex Hasse diagram or box-Delaunay graph of dimension 2).
\begin{theorem}\label{thm:d=2}
Let $P\subseteq[0,1]^{2}$ be a uniformly random set of $n$ points
in $[0,1]^{2}$, and let $G$ be either the box-Delaunay graph of
$P$ or the Hasse diagram of $P$. Then, whp
\[
\frac{c\log n}{\log\log n}\le\chi(G)\le \frac{C\log n}{\log\log n},\qquad 
\frac{cn\log \log n}{\log n}\le \alpha(G)\le\frac{Cn\log \log n}{\log n},
\]
for some absolute constants $c,C>0$.
\end{theorem}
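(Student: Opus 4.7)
The plan is to reduce the four bounds in the theorem to two main inequalities. Let $H$ and $D$ denote the Hasse diagram and the box-Delaunay graph of $P$, respectively. As noted in the introduction, $H$ is a subgraph of $D$, so $\chi(H)\le\chi(D)$ and $\alpha(D)\le\alpha(H)$. The two main bounds to establish are: (a) $\alpha(H)\le Cn\log\log n/\log n$, and (b) $\chi(D)\le C\log n/\log\log n$. Bound (a), combined with $\alpha(D)\le\alpha(H)$ and $\chi\ge n/\alpha$, yields the lower bounds $\chi(H),\chi(D)\ge c\log n/\log\log n$; bound (b), combined with $\chi(H)\le\chi(D)$ and $\alpha\ge n/\chi$, yields the lower bounds $\alpha(D),\alpha(H)\ge cn\log\log n/\log n$. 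Throughout, I would work in the equivalent combinatorial model $P=\{(i/n,\pi(i)/n):i\in[n]\}$ for a uniformly random permutation $\pi\in S_n$, since the Hasse and box-Delaunay structures depend only on the $x$- and $y$-rank permutations.

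For bound (a) I would proceed by induction on $n$ using a multiscale grid partition. Divide $[0,1]^2$ into $L^2$ square cells of side $1/L$ with $L=\log n$; by a Chernoff bound, each cell typically contains $m=n/L^2$ points of $P$. The crucial geometric observation is that for a comparable pair $p\prec q$ lying in the same cell $C$, the order box $[p_1,q_1]\times[p_2,q_2]$ is entirely contained in $C$, so whether $(p,q)$ is a Hasse-edge depends only on $P\cap C$; the sub-problem within each cell is a self-similar copy of the original with $m$ points. Applying the inductive hypothesis cell by cell: if a Hasse-independent set $S\subseteq P$ satisfied $|S\cap C|\ge Cm\log\log m/\log m$ for some cell, then $S\cap C$ would contain a Hasse-edge in $P\cap C$, contradicting independence of $S$ in $P$. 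Summing the per-cell bound gives $|S|<L^2\cdot Cm\log\log m/\log m\approx Cn\log\log n/\log n$, since $\log m/\log\log m\sim\log n/\log\log n$ when $m=n/(\log n)^2$. The recursion terminates after $\Theta(\log n/\log\log n)$ levels, when the cells become constant-sized and the conclusion is trivial.

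For bound (b) I would construct an explicit $O(\log n/\log\log n)$-colouring of $D$ via a multiscale grid decomposition analogous to (a). Within each cell $C$, recursively colour $D[P\cap C]$ with $O(\log m/\log\log m)$ local colours (noting that $D[P\cap C]$ is a subgraph of the fresh box-Delaunay graph of $P\cap C$, to which the inductive hypothesis applies). Cross-cell edges must then be handled by a ``coarse'' colouring of cells. The delicate point is that cross-cell box-Delaunay edges are not confined to neighbouring cells: a thin bounding box straddling a cell boundary can still connect cells in the same column (or row) that are far apart in the other direction. A careful probabilistic/combinatorial argument must show that the surviving cross-cell edge structure is sufficiently sparse to admit a compatible coarse colouring with $O(1)$ additional colours per level, and that the coarse palette can be shared across recursive levels so the total number of colours grows additively rather than multiplicatively. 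An alternative would be to first prove the lower bound $\alpha(D)\ge cn\log\log n/\log n$ constructively (via an explicit large independent set in $D$) and then derive (b) by iterative peeling, provided the construction remains robust enough to give the required density inside every residual set it produces.

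The main obstacle is the tight accounting in these multiscale recursions. In (a), the density $|S|/|P|$ must propagate across $\Theta(\log n/\log\log n)$ levels without accruing a $\log\log n$ loss (which is precisely what limited Chen--Pach--Szegedy--Tardos to $(\log\log n)^2/\log n$); sharper concentration of the per-cell counts of both $P$ and $S$, and a careful choice of the scale parameter $L$, are required. In (b), the challenge is that the natural cell-based construction produces an extra constant factor per recursive level, potentially blowing up the chromatic bound by a factor $c^{\log n/\log\log n}=n^{\Theta(1/\log\log n)}\gg\log n/\log\log n$; avoiding this requires either a shared coarse palette (reused at every level) or an entirely different global construction. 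These refinements constitute the main technical heart of the proof.
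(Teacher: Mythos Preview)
Your reduction to the two inequalities (a) and (b) is correct and matches the paper's strategy. However, both of your proposed arguments have genuine gaps.

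\textbf{On (a).} The multiscale induction does not prove anything. Your inductive step only uses the inequality
\[
\alpha(H(P))\le \sum_{C}\alpha(H(P\cap C)),
\]
which comes from the local observation that $S\cap C$ is independent in the cell $C$. This inequality uses \emph{no cross-cell edges at all}, so it cannot improve the density $\alpha/n$: it merely asserts that the density of $S$ in $P$ is at most the maximum density in any cell. When you unroll the recursion down to constant-size cells (your stated base case), the density there is $1$, and the inductive step preserves density, so you recover only the trivial bound $\alpha\le n$. Equivalently, the induction does not close: you need $\alpha\le Cn\,\frac{\log\log n}{\log n}$ but obtain $\alpha\le Cn\,\frac{\log\log m}{\log m}$ with $m=n/(\log n)^2$, which is strictly weaker. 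The Chen--Pach--Szegedy--Tardos $(\log\log n)^2$ loss was not a matter of ``sharper concentration of cell counts''; it came from a completely different argument (digit-by-digit revelation of the second coordinate), and the paper's improvement sharpens that argument by additionally tracking the \emph{distribution of gap lengths} between consecutive marked points at each digit-revelation step. Cross-cell edges (or their analogue, the randomness of the second coordinate linking points at many scales simultaneously) are essential.

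\textbf{On (b).} You correctly identify the obstruction (cross-cell box-Delaunay edges are highly non-local), but you do not propose a mechanism to handle it; both ``a careful argument must show\dots'' and the iterative-peeling alternative are placeholders, and the latter would require the independence-number lower bound to hold in \emph{every} vertex subset, which is a much stronger statement than you are assuming. The paper avoids recursion entirely: it proves that whp the maximum degree of $D$ is $O(\log n)$ (via a sweep-line exploration of a Poissonised process) and that every vertex lies in only $O(\log n)$ triangles (a deterministic fact in dimension $2$), then applies the Alon--Krivelevich--Sudakov theorem to get $\chi(D)=O(\Delta/\log f)=O(\log n/\log\log n)$.
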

The Chen--Pach--Szegedy--Tardos proof strategy does not straightforwardly generalise to higher dimensions, but by considering a more complicated probability distribution on box-Delaunay graphs, 
Tomon~\cite{Tom24} was able to prove that, for any constant $d\ge 2$, there exist $n$-vertex box-Delaunay graphs of dimension $d$ which have independence number at most $n/(\log n)^{(d-1)/2+o(1)}$ (and therefore chromatic number at least $(\log n)^{(d-1)/2+o(1)}$). 
Although his proof does not work for Hasse diagrams, he conjectured that a similar bound should 
hold in that setting (namely, he conjectured that there is a constant $c>0$ such that, for any $d$ and sufficiently large $n$, there are $n$-vertex Hasse diagrams of dimension $d$ whose independence number is at least $n/(\log n)^{cd}$).

As our second result, we prove the first results for Hasse diagrams and box-Delaunay graphs of random point sets in dimension $d\ge 3$: the typical chromatic number is $(\log n)^{d-1+o(1)}$ and the typical independence number is $n/(\log n)^{d-1+o(1)}$. In particular, this resolves Tomon's conjecture.

\begin{theorem}\label{thm:d>2}
Fix $d\ge 3$. Let $P\subseteq[0,1]^{d}$ be a uniformly random set of $n$ points
in $[0,1]^{d}$, and let $G$ be either the box-Delaunay graph of
$P$ or the Hasse diagram of $P$. Then, whp
\[
\frac{c_d(\log n)^{d-1}}{(\log\log n)^{2d-2}}\le\chi(G)\le \frac{C_d(\log n)^{d-1}}{\log\log n},\qquad 
\frac{c_dn\log \log n}{(\log n)^{d-1}}\le \alpha(G)\le\frac{C_dn(\log \log n)^{2d-2}}{(\log n)^{d-1}},
\]
for some $c_d,C_d>0$ only depending on $d$.
\end{theorem}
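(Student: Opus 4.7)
Since $\chi(G)\,\alpha(G)\ge n$ and since the Hasse diagram is a subgraph of the box-Delaunay graph on the same point set, it suffices to establish, whp, the two upper bounds $\chi(\text{box-Delaunay})\le C_d(\log n)^{d-1}/\log\log n$ and $\alpha(\text{Hasse})\le C_d n(\log\log n)^{2d-2}/(\log n)^{d-1}$; the other two bounds in \cref{thm:d>2} then follow by taking reciprocals (using $\chi(\text{Hasse})\le\chi(\text{box-Delaunay})$ and $\alpha(\text{Hasse})\ge\alpha(\text{box-Delaunay})$).

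For the upper bound on $\chi$, the plan is to apply Johansson's theorem to the Hasse diagram and a locally-sparse chromatic theorem (in the spirit of Molloy--Reed) to the box-Delaunay graph. The Hasse diagram is triangle-free, since three pairwise-adjacent vertices would contradict the transitivity of the covering relation. For each $v\in P$, its Hasse-neighbors split into the coordinatewise maxima of $\{u\in P:u\prec v\}$ and the coordinatewise minima of $\{w\in P:v\prec w\}$; by the classical fact that $n$ uniform points in $[0,1]^d$ have $O_d((\log n)^{d-1})$ maxima whp, both pieces have this size, so the maximum degree $\Delta$ of the Hasse diagram is $O_d((\log n)^{d-1})$ whp. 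Johansson's theorem then gives $\chi(\text{Hasse})=O_d(\Delta/\log\Delta)=O_d((\log n)^{d-1}/\log\log n)$. The box-Delaunay graph is the union of $2^d$ Hasse diagrams over coordinate reflections, so its max-degree is of the same order, but it can contain triangles; I would verify, by a direct second-moment computation, that the expected number of edges in a typical vertex's neighborhood is $o(\Delta^2)$, so that a local-sparsity extension of Johansson's theorem yields the same asymptotic bound.

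For the upper bound on $\alpha(\text{Hasse})$, the plan is a first-moment argument. Set $m=\lceil C_d n(\log\log n)^{2d-2}/(\log n)^{d-1}\rceil$ and bound the expected number of Hasse-independent $m$-subsets of $P$ by $\binom{n}{m}$ times the probability that a fixed $m$-subset $I\subseteq P$ is Hasse-independent. The independence condition forces every comparable pair $u\prec v$ in $I$ to admit a \emph{blocker} in $[u,v]\cap(P\setminus I)$, and the goal is to show that the demand for blockers exceeds the available supply of $n-m$ points whenever $|I|$ violates the desired bound.

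The main obstacle is that a single blocker can cover many comparable pairs of $I$, so the naive supply-versus-demand count is too wasteful. I would handle this by partitioning $I$ according to a dyadic scale structure across the $d-1$ ``shape'' coordinates and inducting on the dimension: fixing a slab in the $d$-th coordinate reduces the problem to a $(d-1)$-dimensional analogue on the slab, with the base case $d=2$ supplied by \cref{thm:d=2}. Each induction layer should lose a factor of $(\log\log n)^2$, arising from concentration of Poisson-type counts at a given scale combined with a union bound over the $\Theta(\log n)$ dyadic scales used at that step; the $d-1$ induction layers then compound multiplicatively to produce the stated $(\log\log n)^{2d-2}$ factor.
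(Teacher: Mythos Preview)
Your overall reduction (it suffices to bound $\chi$ of the box-Delaunay graph from above and $\alpha$ of the Hasse diagram from above) matches the paper's. But both of the substantive parts of your proposal have real gaps.

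\textbf{Chromatic number.} For the Hasse diagram your argument is fine (and the paper notes this too). For the box-Delaunay graph, however, ``the expected number of edges in a typical vertex's neighbourhood is $o(\Delta^2)$'' is not enough. The Alon--Krivelevich--Sudakov theorem gives $\chi=O(\Delta/\log f)$ when every vertex sees at most $\Delta^2/f$ triangles; to recover the full $\log\log n$ saving you need $f=\Delta^{\Omega(1)}$, i.e.\ a \emph{polynomial} gap, not merely $o(\Delta^2)$. The paper obtains this by proving that every vertex lies in $O((\log n)^{2d-3+o(1)})$ triangles, and this is not a second-moment computation: it requires a geometric classification of edges into ``far'' and ``close'' (by the volume of the spanned box), a separate bound on far edges per vertex, and a volume estimate for the region in which a third point $y$ can close a triangle of close edges with a given close pair $(v,x)$. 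A separate minor point: your appeal to the ``classical fact'' about the number of maxima of $n$ random points gives the right order for a \emph{single} vertex; to get it for all $n$ vertices simultaneously you need tail bounds beating $1/n$, which is what the paper's exploration-process argument is for.

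\textbf{Independence number.} Your supply-versus-demand framing (each comparable pair in $I$ needs a blocker, so count blockers) does not by itself give anything: as you already observe, one point of $P\setminus I$ can block arbitrarily many comparable pairs of $I$, so there is no shortage of supply in any naive sense. Your proposed fix---``partition $I$ by a dyadic scale structure, fix a slab in the $d$-th coordinate, reduce to dimension $d-1$''---is not a recognisable argument, and I do not see how to make it one. The paper's proof is structurally different: one fixes the first $d-1$ coordinates, then reveals the $d$-th coordinate of each point $L$-ary digit by digit (with $L$ chosen carefully). The inductive hypothesis produces, after each digit, a large collection of \emph{disjoint} marked pairs $(i,j)$ on the same ``level'' with few points of $P$ between them in the first $d-1$ coordinates; one then uses \emph{only the randomness of the marked points} in the next digit to show that many of these pairs become Hasse edges (or, in the inductive step, pairs with few points between them in all $d$ coordinates). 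The disjointness is what gives the independence needed for a Chernoff bound, and the restriction to marked-point randomness is what sidesteps the heavy correlations you are implicitly fighting in the blocker picture. None of this is visible in your sketch.
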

We believe that, in \cref{thm:d>2}, the upper bound on the chromatic number and the lower bound on the independence number are both sharp up to constant factors (i.e., we conjecture that the correct exponent of $\log \log n$ is 1, for any fixed dimension $d$). Given that we were unable to prove this, we made no particular effort to optimise the exponent ``$2d-2$'' (however, in \cref{remark: improvable}, we sketch how it can be improved to $d-1$ ).

\subsection{Further directions}
As stated above, an obvious question left open by our work is to close the gaps between the upper and the lower bounds in \cref{thm:d>2}.

Actually, one might wonder whether (for every constant $d\ge2$) $(\log n)^{d-1+o(1)}$ is the maximum chromatic number of \emph{any} $n$-vertex box-Delaunay graph or Hasse diagram of dimension $d$, and $n/(\log n)^{d-1+o(1)}$ is the minimum independence number. 
However, this currently seems far out of reach: some of the most important open questions in this direction are whether the chromatic number of a 2-dimensional Hasse diagram or box-Delaunay graph is always $n^{o(1)}$, and whether the independence number is always $n^{1-o(1)}$. 
The current best general upper bounds on the chromatic number (implying lower bounds on the independence number) are due to Chan~\cite{Cha12}, based on ideas of Ajwani, Elbassioni, Govindarajan, and Ray~\cite{AEGR12}.

Another obvious direction is to try to better understand the dependence on $d$ (in \cref{thm:d>2}, we treat $d$ as a fixed constant). In particular, we wonder if it might be possible to improve bounds on the maximum chromatic number of an $n$-vertex Hasse diagram by considering a random set of $n$ points in $[0,1]^{d(n)}$, for a judicious choice of the function $d(n)$.

Finally, we remark that much of the work in computational geometry on box-Delaunay graphs is motivated by a problem called \emph{conflict-free colouring} (introduced by Even, Lotker, Ron and Smorodinsky~\cite{ELRS03}). Specifically, a conflict-free colouring of a point set $P\subseteq \mb R^d$ (with respect to axis-aligned boxes) assigns colours to the points in $P$ in such a way that, for every box $B\subseteq \mb R^d$ containing a point of $P$, there is a colour which appears exactly once in $B\cap P$. It is easy to see that conflict-free colourings of $P$ are proper colourings of the box-Delaunay graph of $P$, so our bounds on the chromatic number immediately imply bounds on the minimum number of colours in a conflict-free colouring. It would be interesting to investigate to what extent these implied bounds are sharp.

\subsection*{Notation}
We use standard asymptotic notation throughout, as follows. For functions $f=f(n)$ and $g=g(n)$ (with $g$ nonnegative), we write $f=O(g)$ to mean that there is a constant $C$ such that $|f(n)|\le Cg(n)$, and we write $f=\Omega(g)$ to mean that there is a constant $c>0$ such that $f(n)\ge cg(n)$ for sufficiently large $n$.
We write $f=\Theta(g)$ to mean that $f=O(g)$ and $g=\Omega(f)$, and
we also write $f=o(g)$ to mean that $f(n)/g(n)\to0$ as $n\to\infty$. Subscripts on asymptotic notation indicate quantities that should be treated as constants (though we will usually not write this explicitly for the dimension $d$, which we will always view as a constant).

For a real number $x$, the floor and ceiling functions are denoted $\lfloor x\rfloor=\max\{i\in \mb Z:i\le x\}$ and $\lceil x\rceil =\min\{i\in\mb Z:i\ge x\}$. We will however sometimes omit floor and ceiling symbols and assume large numbers are integers, wherever divisibility considerations are not important. 
In addition, all logarithms without an explicit base should be interpreted as having base $\e$. 
Finally, for any point $x \in \mb{R}^d$, we will use $x_i$ to denote its $i$-th coordinate.

\section{Proof outline}\label{sec:outline}

In this section, we outline some of the key aspects of the
proofs of \cref{thm:d=2,thm:d>2}, making comparisons to previous work (and explaining the limitations of some natural alternative ideas). This section also serves as a roadmap to lemmas proved in the rest of the paper.

Fix $d\ge2$ and let $P\subseteq[0,1]^{d}$ be a uniformly random set of $n$ points in $[0,1]^{d}$. 
Write $D(P)$ for the box-Delaunay graph of $P$, and write $H(P)$ for the Hasse diagram of $P$ (with respect to the usual coordinatewise partial order). 
Since $H(P)$ is a subgraph of $D(P)$, it holds that 
\[
    \alpha(H(P)) \ge \alpha(D(P)) \ge \frac{|P|}{\chi(D(P))}, \quad   
    \chi(D(P)) \ge \chi(H(P)) \ge \frac{|P|}{\alpha(H(P))}.
\]
So, to prove \cref{thm:d=2,thm:d>2}, it suffices to show that whp
\begin{align}
\chi(D(P)) & = O\left(\frac{(\log n)^{d-1}}{\log\log n}\right),\label{eq:chi}\\
\alpha(H(P)) & = O\left(\frac{n\log\log n}{\log n}\right)\text{ for }d=2,\label{eq:alpha-2}\\
\alpha(H(P)) & = O\left(\frac{n(\log\log n)^{2d-2}}{(\log n)^{d-1}}\right)\text{ for }d>2.\label{eq:alpha}
\end{align}
(We remind the reader that, throughout this paper, $d$ is always treated
as a constant for the purpose of asymptotic notation).

The upper bound on $\chi(D(P))$ in \cref{eq:chi} is proved in a completely
different way to the upper bounds on $\alpha(H(P))$ in \cref{eq:alpha-2}
and \cref{eq:alpha}; we outline both separately.

\subsection{Upper bounds on the chromatic number}

First, we outline the proof of \cref{eq:chi}.

\subsubsection{The maximum degree}

The first ingredient is the following bound on the maximum degree
of a random box-Delaunay graph.

\begin{lemma}\label{lemma: max degree}
    Fix $d \ge 2$.
    Let $P \subseteq [0,1]^d$ be a uniformly random set of $n$ points, and let $G$ be the box-Delaunay graph of $P$.
    Then, whp, the maximum degree of $G$ is $O((\log n)^{d-1})$.
\end{lemma}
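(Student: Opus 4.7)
First I would reduce the maximum-degree problem to bounding Pareto-front sizes. For distinct $p,q\in P$, the smallest axis-parallel box containing both is $\prod_i[\min(p_i,q_i),\max(p_i,q_i)]$, and $pq$ is an edge of the box-Delaunay graph iff this box contains no other point of $P$. Classifying each candidate neighbour $q$ of $p$ by the sign pattern $\sigma(q-p)\in\{-,+\}^d$ places it in one of $2^d$ orthants at $p$, and within any such orthant, the neighbours of $p$ are precisely the minimal elements (under coordinatewise dominance from $p$) of $P\setminus\{p\}$ restricted to that orthant. Hence
\[
    \deg_G(p) \;=\; \sum_{\sigma\in\{-,+\}^d} \nu^\sigma(p),
\]
where $\nu^\sigma(p)$ denotes the number of Pareto-minimal points of $P\setminus\{p\}$ in the $\sigma$-orthant at $p$. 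Since $d$ (and hence $2^d$) is constant, it suffices to prove $\max_{p,\sigma}\nu^\sigma(p)=O((\log n)^{d-1})$ whp.

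Next I would establish a high-probability tail bound on the Pareto-front of random points: writing $M_d(N)$ for the number of Pareto minima of $N$ i.i.d.\ uniform points in $[0,1]^d$, I aim to show that for every $N\le n$ and some constant $C_d>0$,
\[
    \Pr[M_d(N)\ge C_d(\log n)^{d-1}] \;\le\; n^{-2}.
\]
For $d=2$ this is classical: sorting the points by first coordinate, the $i$-th point is Pareto-minimal iff its second coordinate is the smallest among the first $i$, so $M_2(N)=\sum_{i=1}^N Y_i$ with $Y_i\sim\text{Ber}(1/i)$ independent, and a Chernoff bound supplies the tail. For $d\ge3$ I would proceed by induction on $d$, applying the analogous decomposition (sort by the $d$-th coordinate; the $i$-th point is Pareto-minimal iff its $(d-1)$-dimensional projection is minimal in the $(d-1)$-dimensional Pareto order of the first $i$ projections) together with a dyadic union bound over geometrically-spaced index windows that converts the conditional indicator probabilities into applications of the $(d-1)$-dimensional bound.

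The argument then concludes by a short conditioning step. Fix $p\in P$ and an orthant $\sigma$: conditional on the position of $p$ and on $N_{p,\sigma}$, the number of other points of $P$ lying in the $\sigma$-orthant at $p$, those $N_{p,\sigma}$ points are i.i.d.\ uniform in the orthant, so $\nu^\sigma(p)$ is distributed as $M_d(N_{p,\sigma})$ and thus stochastically dominated by $M_d(n)$. Applying the tail bound and union-bounding over the $n$ choices of $p$ and $2^d$ orthants yields $\max_p \deg_G(p)=O((\log n)^{d-1})$ whp.

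The main obstacle is the tail bound of the second step for $d\ge3$. The case $d=2$ is immediate from Chernoff on independent Bernoullis, but in higher dimensions the natural representation $M_d(N)=\sum_i Z_i$ has genuine dependencies, so obtaining exponentially small tail probabilities requires a careful inductive or moment-based treatment (computing joint intensities for $k$-tuples of antichain points if one prefers the latter); controlling this dependence uniformly across all fixed dimensions $d$ is the key technical point.
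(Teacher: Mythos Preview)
Your reduction to Pareto-front sizes in each orthant is correct, and this is also the heart of the paper's argument: the paper's Poissonised Lemma bounds exactly the number of points $x$ with $\mathrm R[o,x]\cap \Pp=\emptyset$, i.e., the Pareto minima in one orthant, and then de-Poissonises and union-bounds over the $2^d$ orthants at each vertex just as you do. The difference is entirely in how the tail bound on $M_d$ is obtained. The paper Poissonises (so disjoint spatial regions become independent) and runs a recursive ``sweeping'' exploration along the $d$-th axis with a shrinking $(d-1)$-dimensional window: it records a $(d-1)$-dimensional array of $O((\log n)^{d-1})$ points $x(i_1,\dots,i_{d-1})$, shows that any unrecorded point of $X$ must lie in a leftover region of total volume $O((\log n)^{d-1}/n)$, and finishes with Bernstein and Poisson tail bounds. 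Poissonisation buys the spatial independence that makes this exploration clean; there is no induction on $d$ in the sense you propose.

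Your route via the last-coordinate record decomposition $M_d(N)=\sum_i Z_i$ and induction on $d$ is natural and the target inequality is certainly true, but you correctly identify the gap: your sketch of the inductive step (``dyadic union bound over geometrically-spaced index windows that converts the conditional indicator probabilities into applications of the $(d-1)$-dimensional bound'') is where all the work lies and is too vague to evaluate as written. The $Z_i$ are genuinely dependent, and you have not specified what quantity in each dyadic window is being bounded by an instance of $M_{d-1}$, nor how these pieces combine into an $n^{-2}$ tail. A concrete way to fill this in is to bound the moment generating function of $M_d$ inductively in $d$ (or to cite known concentration results for the number of maxima of i.i.d.\ points); without one of these, the proposal is a correct plan but not yet a proof. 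One minor point: the stochastic-domination claim $M_d(N)\preceq M_d(n)$ for $N\le n$ is true but not immediate---it follows from the coupling in which points are added in increasing order of their $d$-th coordinate, so that each new arrival has the largest last coordinate so far and hence cannot dominate any existing point---and this deserves a sentence of justification.
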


It is a near-trivial fact that any graph with maximum degree $\Delta$
has chromatic number at most $\Delta+1$, so \cref{lemma: max degree} immediately implies
that whp $\chi(D(P)) = O((\log n)^{d-1})$, which differs only by
a $\log\log n$ factor from the desired bound in \cref{eq:chi}.

\begin{remark}\label{rem:avg-vs-max} In the setting of \cref{lemma: max degree}, it is much easier to study the
\emph{average} degree of $G$ than its maximum degree. Indeed, a straightforward
computation (summing over all pairs of vertices, using linearity of
expectation) shows that the expected average degree is $\Theta((\log n)^{d-1})$,
and one can show that the average degree is concentrated around its
expectation using the second moment method (this is done in \cite{CPST09}
for $d=2$). The so-called \emph{Caro--Wei bound} (see for example \cite[pp.~100--101]{AS16}) says that every $n$-vertex graph $G$ with average degree $\bar{d}$
has $\alpha(G)\ge n/(\bar{d}+1)$, so this is already enough to obtain
a nearly-optimal lower bound on the independence number.\end{remark}

We prove \cref{lemma: max degree} via a union bound over all vertices. For each vertex $v$,
we consider an exploration process that scans through the unit box
$[0,1]^{d}$, identifying large regions in which $v$ cannot possibly
have any neighbours while revealing as little information about $P$
as possible. Then, using the remaining
randomness of $P$, standard
concentration inequalities imply that, in the part of $[0,1]^{d}$ which
has not been ruled out, there are likely only $O((\log n)^{d-1})$ points of $P$.

In order to actually implement this kind of ``iterative exposure''
idea, it is important to use the technique of \emph{Poissonisation}:
namely, instead of studying our uniformly random $n$-point set $P$
directly, we consider a Poisson point process $\Pp$ with intensity $n$ 
and then, after proving the desired bounds, we relate the distributions
of $P$ and $\Pp$. The advantage of a Poisson process is that disjoint
regions of space are stochastically independent (so we can reveal
what happens in one region while retaining all the randomness of
a second region).

We collect some preliminary probabilistic tools in \cref{sec:prelims}, and the details
of the proof of \cref{lemma: max degree} appear in \cref{subsec:maxdegree}.

\subsubsection{Local sparsity}

To obtain the $\log\log n$ factor in \cref{eq:chi}, we need to use
more information about $D(P)$ than its maximum degree; namely, we
need to take advantage of \emph{local sparsity} 
information. For example,
it is easy to see that every Hasse diagram is triangle-free (equivalently,
for every vertex $v$, there are no edges among the neighbours of
$v$). This allows one to apply a well-known theorem of Johansson
\cite{Joh96b}\footnote{This paper is not easily available on the internet, but (an alternative presentation of) the proof can be found in \cite{MR02}. Similarly, a presentation of the proof in \cite{Joh96} can be found in \cite{BGG}.}, that every triangle-free graph with maximum degree $\Delta$ has
chromatic number at most $O(\Delta/\log\Delta)$. Indeed, Johansson's
theorem (together with \cref{lemma: max degree}) immediately implies \cref{eq:chi} with $H(P)$ in
place of $D(P)$.

Box-Delaunay graphs are not triangle-free in general, but it is easy
to see (e.g.\ using Ramsey's theorem or the Erd\H{o}s-Szekeres theorem) that, for every $d\ge 2$,
there is an integer $T$ such that $d$-dimensional box-Delaunay
graphs are $K_{T}$-free (i.e., contain no complete subgraph on $T$ vertices). 
Johansson also proved~\cite{Joh96} that $K_{T}$-free graphs with maximum
degree $\Delta$ have chromatic number $O_{T}(\Delta\log\log\Delta/\log\Delta)$; 
this result can be used to prove \cref{eq:chi} up to a factor of $\log\log\log n$.

\begin{remark}
    Parallelling \cref{rem:avg-vs-max}, we observe that, before Johansson, Shearer~\cite{Shearer95} famously proved that, for an $n$-vertex graph $G$ with average degree $\bar{d}$, if $G$ is triangle-free, then $\alpha(G) = \Omega(n\log\bar{d}/\bar{d})$ and, if $G$ is $K_{T}$-free, then $\alpha(G) = \Omega(n\log\bar{d}/(\bar{d}\log\log\bar{d}))$.
    The latter of these facts was used in \cite{CPST09} to prove a near-optimal lower bound on the independence number in the 2-dimensional case.
\end{remark}

It is conjectured that Johansson's bound for $K_{T}$-free graphs
can be improved by a factor of $\log\log \Delta$ but, as this is still
an open problem, we need to find another route to prove the precise bound in \cref{eq:chi}. Namely,
we use the following theorem of Alon, Krivelevich and Sudakov~\cite{AKS99} (which
they deduced from Johansson's theorem on triangle-free graphs).
\begin{theorem} \label{theorem: AKS}
    Let $G$ be a graph with maximum degree $\Delta$ where every vertex is incident to at most $\Delta^2/f$ triangles.
    Then, $\chi(G) = O(\Delta / \log f)$.
\end{theorem}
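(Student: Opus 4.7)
The plan is to prove this by the semirandom (R\"odl nibble / Lov\'asz Local Lemma) colouring method, following the original strategy of Alon, Krivelevich, and Sudakov. The hypothesis makes $G$ \emph{locally triangle-sparse}: every neighbourhood $N(v)$ contains at most $\Delta^2/f$ edges. The overall goal is to ``imitate'' Johansson's probabilistic colouring argument for triangle-free graphs, using the bound $e(N(v)) \le \Delta^2/f$ in place of the stronger hypothesis $e(N(v)) = 0$, and extract a $\log f$ factor in place of $\log \Delta$.

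Concretely, I would fix a palette of size $k = C \Delta / \log f$ for a sufficiently large constant $C$, assign each vertex an independent uniform random colour from the palette, and study the random variable $A_v$ counting colours $c \in [k]$ that no neighbour of $v$ has received. The expected value of $A_v$ is roughly $k(1 - 1/k)^{\deg(v)} \approx k e^{-\Delta/k}$; the key step is to show that, with positive probability, $A_v = \Omega(k)$ simultaneously for every vertex $v$. In Johansson's triangle-free setting this concentration comes from near-independence of the events ``neighbour $u$ takes colour $c$'' as $u$ ranges over $N(v)$, which fails only when two such neighbours are themselves adjacent. Under the local-sparsity hypothesis there are only $\Theta(\Delta^2/f)$ such exceptional pairs, and a careful second-moment (or entropy) calculation shows that they contribute a negligible correction. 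Combining this with the Lov\'asz Local Lemma yields a random partial proper colouring in which every vertex still has $\Omega(k)$ available colours; iterating on the residual uncoloured subgraph (whose maximum degree decreases geometrically) then produces a proper colouring using $O(\Delta / \log f)$ colours in total.

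The main obstacle is the moment calculation controlling the second-order correction to $\var(A_v)$ coming from the adjacent pairs in $N(v)$: one must track how these pairs interact with the random colouring so that the variance stays small enough across all $v$ for the Lov\'asz Local Lemma to close, and the threshold $\Delta^2/f$ must enter the computation in exactly the right way to pin down the $\log f$ factor (rather than, say, $\log \Delta$ or $\log (f/\Delta)$). A cleaner modern route would be to invoke Molloy's entropy-compression strengthening of Johansson's theorem, which directly accommodates a local-sparsity hypothesis of this form and would give the conclusion essentially as a black box.
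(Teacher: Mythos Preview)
The paper does not prove this theorem; it is quoted as a known result of Alon, Krivelevich and Sudakov and used as a black box. The only information the paper gives about the proof is the parenthetical remark that AKS ``deduced [it] from Johansson's theorem on triangle-free graphs''.

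Your proposal takes a different route from what that remark indicates. Rather than \emph{reducing} to the triangle-free case and invoking Johansson as a black box, you sketch a \emph{direct} semirandom argument that essentially rewrites Johansson's nibble under the weaker local-sparsity hypothesis. That is a legitimate programme in principle (and close in spirit to later treatments such as Molloy's), but it is not the AKS argument, and what you have written is only a plan: the step you yourself identify as ``the main obstacle''---controlling the second-moment/entropy contribution from the at most $\Delta^2/f$ adjacent pairs inside each $N(v)$ precisely enough for the Local Lemma to close at palette size $C\Delta/\log f$ through every round of the iteration---is left entirely unexecuted. (Note also that a one-shot random colouring with $k=C\Delta/\log f$ colours leaves only $k\,e^{-\Delta/k}=k/f^{1/C}$ available colours in expectation, which is not $\Omega(k)$; the iteration is essential, not a cleanup step.) The actual AKS proof sidesteps all of this: it randomly partitions $V(G)$ and uses the local-sparsity hypothesis, via the Local Lemma, to ensure that each part induces a graph that is (after minor cleaning) triangle-free with appropriately reduced maximum degree, and then applies Johansson's theorem to each part. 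The reduction is short; the hard analysis stays inside Johansson's black box. Your approach, fully carried out, would amount to reproving a locally-sparse analogue of Johansson's theorem from scratch.
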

Note that, in a graph with maximum degree $\Delta$, every vertex is trivially incident to at most $\binom \Delta 2\le \Delta^2$ triangles. Informally speaking, \cref{theorem: AKS} says that whenever one has a significant improvement over this ``trivial bound'' (i.e., the neighbourhoods are ``sparse''), one gets a logarithmic improvement on the worst-case chromatic number.

So, in order to deduce \cref{eq:chi} from \cref{theorem: AKS} (and \cref{lemma: max degree}), it suffices to show the following.

\begin{lemma}\label{lemma: few triangles}
    Fix $d \ge 2$.
    Let $P\subseteq [0,1]^d$ be a uniformly random set of $n$ points, and let $G$ be the box-Delaunay graph of $P$.
    Then, whp, the number of triangles incident to every vertex is $O((\log n)^{2d-3+o(1)})$.
\end{lemma}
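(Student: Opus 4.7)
The plan is to Poissonise (as in the proof of \cref{lemma: max degree}) and then, for each fixed vertex $v$, bound the number of triangles $T_v$ at $v$ with probability at least $1-O(n^{-2})$, so that a union bound over the $\Theta(n)$ vertices gives the result. Combined with \cref{lemma: max degree}, we already have the trivial bound $T_v\le \binom{|N(v)|}{2}=O((\log n)^{2d-2})$ whp, so it suffices to save roughly one $\log n$ factor.

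First I would estimate $\mathbb{E}[T_v]$. Classify each triangle by the orthant patterns of $u-v$ and $w-v$ in $\{-,+\}^d$: if the orthants are opposite, then $v\in B(u,w)$, which precludes an edge between $u$ and $w$, so these configurations contribute nothing. By axis-flip symmetry the remaining cases reduce to $O(1)$ similar integrals, of which the main one is when $u$ and $w$ both lie in the positive orthant of $v$. Here $u,w$ must be incomparable in the coordinatewise order (otherwise one of them would lie in $B(v,\cdot)$ of the other), and the Slivnyak--Mecke formula gives the expected count of such triangles as
\[
    \tfrac{1}{2}\, n^2 \int\int \mathbf{1}[u,w\text{ incomparable}]\,\exp\bigl(-n\cdot V(v,u,w)\bigr)\,du\,dw,
\]
where $V(v,u,w)=|B(v,u)\cup B(v,w)\cup B(u,w)|$. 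After an appropriate change of variables (for instance $s_i=\min(u_i,w_i)$ and $c_i=|u_i-w_i|$, with a further partition by the sign pattern of $u-w$), this integral factorises into products of one-dimensional exponential integrals that evaluate to give $\mathbb{E}[T_v]=O((\log n)^{d-1+o(1)})$.

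To upgrade this expectation bound to a uniform high-probability bound, I would apply the $k$-th moment method with $k\sim \log n/\log\log n$. Enumerating $k$-tuples of triangles at $v$ by their incidence structure (that is, how they share additional points beyond $v$) and applying the same exponential-integration technique as above should yield $\mathbb{E}[T_v^k]\le C^k(\log n)^{(d-1+o(1))k}$; Markov's inequality then gives $\Pr[T_v\ge (\log n)^{2d-3+o(1)}]\le n^{-2}$, which is summable over the vertices.

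The hard part is likely the moment enumeration, because distinct triangles at $v$ may share further common neighbours, introducing correlations that must be treated case by case. An alternative (possibly cleaner) route is to first condition on the set $N(v)$ of neighbours of $v$ (whose size is $O((\log n)^{d-1})$ whp by \cref{lemma: max degree}) and bound the number of edges in $G[N(v)]$ directly: in each of the $2^d$ orthants of $v$ the neighbours form an antichain on which the induced adjacency is geometrically constrained (in 2D, only consecutive pairs on the staircase can be adjacent), and the main technical challenge is to find a higher-dimensional generalisation of this staircase bound giving $O((\log n)^{2d-3+o(1)})$ edges per vertex.
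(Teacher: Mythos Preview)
Your outline is reasonable, but it stops short of a proof at exactly the point where the difficulty lies. The $d=2$ idea you mention as an alternative --- bounding edges in $G[N(v)]$ via the staircase structure in each quadrant --- is precisely what the paper does, deterministically, to handle that case. For $d\ge 3$, however, you offer two routes and leave both open: the $k$-th moment enumeration requires controlling correlations among overlapping $k$-tuples of triangles, which you flag as ``the hard part'' without indicating how to carry it out; and the ``higher-dimensional staircase'' generalisation is posed as a challenge rather than supplied. Neither step is routine, and in particular the moment bound $\mathbb E[T_v^k]\le C^k(\log n)^{(d-1+o(1))k}$ is not something one should assert without a mechanism for handling the degenerate configurations.

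The paper sidesteps both difficulties with a different decomposition. It fixes a volume threshold $\zeta=\Theta((\log\log n)/n)$ and calls a pair $(x,y)$ \emph{far} if $\vol(\mr R[x,y])>\zeta$, and \emph{close} otherwise. Two lemmas then do the work. First, every vertex is whp incident to only $O((\log n)^{1+o(1)})$ far edges (a far box is very likely to contain a point of $P$), so triangles containing any far edge number at most $O((\log n)^{d+o(1)})$ per vertex, by combining with the max-degree bound. Second, for any close pair $(p,x)$, the region of $y$ with $\vol(\mr R[p,y])\le \vol(\mr R[p,x])$ and $\vol(\mr R[x,y])\le\zeta$ has volume $O(\zeta(\log n)^{d-2})$, hence whp contains only $O((\log n)^{d-2+o(1)})$ points; summing over the $O((\log n)^{d-1})$ neighbours $x$ of $p$ gives $O((\log n)^{2d-3+o(1)})$ all-close triangles at $p$. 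The volume estimate is the crux, and it is proved by an explicit covering of the region by $d+1$ simpler pieces --- a concrete geometric argument that replaces your moment computation entirely and needs no concentration beyond a single Poisson tail bound.
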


In our proof of \cref{lemma: few triangles}, we can afford to be rather crude (we believe the bound in \cref{lemma: few triangles} is far from best possible). 
We classify the edges (or pairs of vertices) in $D(P)$ to be ``far'' or ``close'' depending on the volume of the minimum axis-aligned box enclosing the two vertices of the edge. 
We show that, for every vertex $v$, there are typically not very many far edges involving $v$ (because, if two vertices are far, they are easily obstructed from being adjacent in $D(P)$ due to the presence of a vertex of $P$ ``between'' them).  
On the other hand, we show that there are typically not many triangles $vxy$ such that $vx,vy,xy$ are all close, for volume reasons. 
Indeed, for any two close vertices $v,x$, there is only a small region of space in which a third vertex $y$ could lie so that $(x,y)$ is a close pair, and $y$ is ``even closer'' to $v$ than $x$ is.
To actually prove this, we need to distinguish various cases for the relative positions of $v,x,y$.

The details of the proof of \cref{lemma: few triangles} appear in \cref{subsec:triangles}.
 
\subsection{Upper bounds on the independence number}
In this section, we describe how to prove \cref{eq:alpha-2} and \cref{eq:alpha}. 
As usual in the study of independence and chromatic numbers of random graphs, the overall strategy is to take a union bound over all sets of a suitable size $k$. 
For example, suppose we order the points in our random point set $P$ by their first coordinate and associate the points in $P$ with the integers $1,\dots,n$; we can then view the Hasse diagram $H(P)$ as a random graph on the vertex set $\{1,\dots,n\}$. 
Then, for each fixed set $I\subseteq\{1,\dots,n\}$ of size $k$, 
suppose we can show that there is an edge of $H(P)$ between vertices
of $I$ with probability at least $1-o(1/\binom{n}{k})$. This will
allow us to union bound over all $\binom{n}{k}$ choices of $I$ to show that whp $\alpha(H(P))<k$.

For a particular pair $i,j\in I$, it is not hard to compute the probability of the event $\mathcal{E}_{ij}$ that $ij$ forms an edge in $H(P)$ (for example, when $d=2$, this probability is exactly $1/(|i-j|+1)$). 
However, the edges of $H(P)$ are quite strongly dependent, and the main challenge is to deal with this dependence. It is informative to start by outlining the approach in \cite{CPST09}, as our proofs of \cref{eq:alpha-2} and \cref{eq:alpha} use it as a starting point.

\subsubsection{The Chen--Pach--Szegedy--Tardos approach}\label{subsubsec:CPST}
In \cite{CPST09}, the authors show that, when $d=2$, there is $k=O(n(\log\log n)^{2}/\log n)$ such that, for any fixed set $I\subseteq\{1,\dots,n\}$ of size $k$, we have $\Pr\big[\bigcup_{i,j\in I}\mathcal E_{ij}\big]\ge 1-o(1/\binom{n}{k})$.

First, note that, while the events $\mathcal{E}_{ij}$ are not mutually independent, it is possible to identify a collection of $\lfloor k/2\rfloor$ pairs $\{i,j\}$ such that the corresponding events $\mathcal{E}_{ij}$ are mutually independent; for example, write the elements of $I$ as $i_{1}<\dots<i_{k}$, and consider the ``consecutive'' pairs $\{i_{1},i_{2}\}$, $\{i_{3},i_{4}\}$, \dots, $\{i_{2\lfloor k/2\rfloor-1},i_{2\lfloor k/2\rfloor}\}$.
It is not hard to see that the \emph{expected} number of these pairs which form an edge in $H(P)$ is at least $\Omega(k\cdot(k/n))$ 
(because, no matter the choice of $I$, the average distance $|i_{q+1}-i_q|$ between consecutive elements $i_q,i_{q+1}\in I$ is about $n/k$), 
and one can show with a Chernoff bound that it is very unlikely that none of these pairs form an edge in $H(P)$: namely, this happens with probability $\exp(-\Omega(k^{2}/n))$. 
Unfortunately, this is too weak for a union bound over $\binom{n}{k}$ choices of $I$, no matter the value of $k$.

To go beyond this na\"ive bound, the authors of \cite{CPST09} observed
that it is actually not necessary to reveal the exact locations of
the points in $P$ to have a good chance of knowing whether a consecutive pair $\{i_{q},i_{q+1}\}$ forms an edge of $H(P)$. 
Indeed, writing $L=n/k$, one can
simply reveal the first digit of the $L$-ary expansions of the second coordinate of each point 
(roughly speaking, for $L$ random numbers $x(1),\dots,x(L)\in[0,1]$, knowing the first digits of their $L$-ary expansions is enough to tell us much of the information about the relative ordering between $x(1),\dots,x(L)$
).
So, the authors considered an ``iterative exposure'' scheme in which the $L$-ary digits of the (second coordinates of the) random points in $P$ are revealed one by one, 
and each time a new digit is revealed, we have a new opportunity to discover an edge of $H(P)$ between vertices of $I$. 
Specifically, before revealing the $r$-th digit, the vertices can be partitioned into $L^{r-1}$ ``levels'' according to their first $r-1$ digits. 
Then, when considering the effect of revealing the $r$-th digits, we can consider pairs of vertices $i,j\in I$ which are consecutive \emph{within their level} (i.e., $i,j$ are on the same level, and there are no other points of $I$ between $i,j$ in this level). 
As long as $L^{r-1}=o(k)$ (so, on average, the levels have many vertices of $I$ in them), we can argue similarly to the above paragraph to see that, at each digit revelation step, we fail to find an edge with probability $\exp(-\Omega(k^{2}/n))$.

Note that $L^{r-1}=o(k)$ as long as (say) $r\le(1/2)\log k/\log L$, 
so this argument can be used to show that there is an edge of $H(P)$ between two vertices of $I$, 
except with probability $\exp(-\Omega(k^{2}/n)\cdot(1/2)\log k/\log L)$.
A simple calculation shows that this probability is $o(1/\binom{n}{k})$ as long
as $k$ is a sufficiently large multiple of $n(\log\log n)^{2}/\log n$.

\subsubsection{Sharpening the analysis in the $2$-dimensional case}\label{subsubsec:sharper}

The analysis sketched above is inefficient in several respects. Most
obviously, at each digit revelation step, it considers only pairs of \emph{consecutive} vertices in $I$, within each level (we just call these ``consecutive pairs'').
It turns out that this inefficiency is not critical: intuitively, it is not too hard to imagine that if two vertices end up forming an edge, it is quite likely that they were consecutive at some point during the digit-revelation process.

More importantly, and somewhat less obviously, the above analysis makes a \emph{worst-case} assumption about the distances between consecutive pairs at each step (here the ``distance'' between two points on the same level is the number of points between them {\em on that level}, plus one). 
If all the $k$ vertices in $I$ were roughly equally distributed between the levels, and roughly equally spaced within each level, then the distance between the vertices of each consecutive pair would be about $n/k$. 
However, we can benefit from {\em fluctuations in these distances}. 
For example, it is reasonable to imagine that, for $i\le \log(n/k)/10$, say, about a $2^{-i}$-fraction of consecutive pairs are at distance about $2^{-i}(n/k)$. 
If this were the case at some point in the digit revelation process, then the expected number of consecutive pairs that would become an edge after the next digit revelation would be about $\sum_{i\le \log(n/k)/10}(2^{-i}k)/(2^{-i}n/k)$, which is of order $\log(n/k)k^{2}/n$. 
That is to say, we would gain an additional factor of $\log(n/k)$ compared to the pessimistic calculation in \cref{subsubsec:CPST}. 
If we were to blindly repeat the rest of the calculations with this additional $\log(n/k)$ factor, we would be able to obtain an optimal bound; 
that is to say, these (currently unjustified) calculations would show that whp $\alpha(H(P))=O(n\log\log n/\log n)$.

So, we need to study the distribution of the distances of consecutive pairs. 
We would like to consider some kind of ``typical distances event'', which occurs with probability $1-o(1/\binom{n}{k})$. 
Informally, this event should say that, ``on average'', the digit revelation process encounters distance fluctuations which yield the desired factor of $\log(n/k)$. 
We would then be able to study the digit revelation process as above, \emph{assuming} that our typical distances event holds, and our assumption will only have been incorrect with negligible probability $o(1/\binom{n}{k})$.

Now, we can study this typical distances event in a very similar way that we originally studied the event that $H(P)$ has an edge in $I$: namely, by revealing the second coordinates of our points in a digit-by-digit fashion, and considering consecutive pairs at each step.  
Crucially, here we consider \emph{binary} expansion, instead of base-$L$ expansion.  
Specifically, having revealed some binary digits (thereby partitioning the points into \emph{binary} levels), we can consider pairs of points in $I$ which are consecutive within their binary level. 
With probability $1/2$, such a pair will still be in the same binary level after revealing a further binary digit, and if this occurs we expect the ``distance'' between the two points in the pair to decrease by a factor of about $1/2$. 

Note that we can study the evolution of the distances of consecutive pairs (on binary levels) on a much more granular basis than we studied the emergence of edges (on $L$-ary levels). 
Indeed, if we assume that $L$ is a power of 2, then revealing an $L$-ary digit is tantamount to revealing a ``block'' of $\log_2 L$ binary digits. 
By revealing these $\log_2 L$ binary digits one by one, we can track the emergence and survival of consecutive pairs: the consecutive pairs which emerge early in the block are less likely to survive to the end of the block, but if they do survive, 
the distance between them  at the end tends to be proportionally smaller.
This allows us to study the distribution of distances of consecutive pairs at each $L$-ary digit revelation step, as required for the typical distances event.

At a very high level, this describes the key ideas in the proof of \cref{eq:alpha-2}. 
The details appear in \cref{sec:d=2}. 
As a few further technical remarks: first, since we need our typical distances event to hold with overwhelming probability, it is crucial that it concerns the \emph{average} behaviour of the
digit revelation process. 
In general, it is not sufficiently unlikely that there are \emph{some} steps where all points in $I$ are evenly spaced among $\{1,\dots,n\}$, and the distances between consecutive pairs are therefore all roughly equal.
Second, it is important that our typical distances event only concerns pairs of vertices which are reasonably far from each other (we can study pairs of vertices at distance at least $(n/k)^{1/10}$, say, but as we decrease the distances under consideration, pairs at such distances become too rare for concentration inequalities to provide the desired bounds).

\subsubsection{Higher dimensions}\label{subsubsec: d dimensions}
For \cref{eq:alpha}, we need to generalise the considerations in \cref{subsubsec:CPST} to $d>2$. There is a simple way to do this that does not really require any new ideas: we can start by fixing the first coordinates of all the points in $P$, and then generalise the digit revelation process to gradually reveal all other coordinates \emph{simultaneously}, at each step
revealing a new digit in all of the last $d-1$ coordinates. Unfortunately, while this approach provides stronger bounds for $d>2$ than for $d=2$, the improvement is limited to the exponent of $\log\log n$ (i.e., for any constant $d$, we obtain a
bound of the form $\alpha(H(P))\le n/(\log n)^{1+o(1)}$, whereas \cref{eq:alpha}
demands a bound of the form $\alpha(H(P))\le n/(\log n)^{d-1+o(1)}$)\footnote{The problem is that (the natural generalisation of) ``consecutive pair'' does not capture enough pairs of vertices. This is related to the unusual geometry of Hasse diagrams and box-Delaunay graphs: even if two vertices are very far in one of their coordinates, if they are very close in another coordinate then they are likely to form an edge.}.

To prove \cref{eq:alpha}, our approach is to first expose the first $d-1$ coordinates, and then consider the digit-by-digit revelation process only on the single last coordinate, adapting the
notion of ``consecutive pair'' to a notion of ``suitable pair''  that incorporates information from all the first $d-1$ coordinates (instead of just the first coordinate).
Borrowing the philosophy in \cref{subsubsec:sharper},  our definition of ``suitable pair'' does not need to guarantee the existence of many such pairs with probability 1; 
instead, we may work under a ``typical pairs event'' which holds with probability $1-o(1/\binom{n}{k})$ and ensures that enough suitable pairs exist throughout the digit
revelation process.

Recall that, in the analysis in \cref{subsubsec:CPST} for $d=2$, the key reason we restricted our attention to consecutive pairs was that they provided us with a very convenient source of \emph{mutual independence} in a highly dependent situation. 
Unfortunately, this very strong independence does not seem to be possible in our higher-dimensional setting, and our definition of ``suitable pair'' will be much less constrained.

To discuss the relevant complications, it is helpful to be a bit more concrete about our setting. 
Suppose that, for some appropriate $k$, we are interested in a particular subset\footnote{Formally speaking, we can view $P$ as a \emph{labelled} set of points with labels $\{1,\dots,n\}$, so it makes sense to ``specify a subset of $P$'' before revealing the randomness of $P$: this just means specifying a subset of the labels.} 
of $k$ of the points in $P$ (call these ``marked'' points); our main goal is to show that it is overwhelmingly likely that there is an edge between two marked points. 
Suppose that at some point we have fully revealed the first $d-1$ coordinates of every point of $P$, and we have revealed $r$ of the $L$-ary digits of the $d$-th coordinate of every point of $P$ (thereby partitioning the points into $L^r$ levels). 
For every pair of marked points $x,y$ on the same level, there is some subset $B_{x,y}\subseteq P$ of points on that level that are ``between'' $x$ and $y$ (according to the natural ordering on the first $d-1$ coordinates). 
This set $B_{x,y}$ 
is the set of all points that can influence whether $xy$ becomes an edge of $H(P)$.

In the $d=2$ case described in \cref{subsubsec:CPST}, at every digit-revelation step, consideration of consecutive pairs gave us a family of pairs $x,y$ for which the sets $B_{x,y}$ were completely disjoint. 
Unfortunately, in higher dimensions, we do not know whether it is possible to find such a family of pairs (we no longer have a good notion of ``consecutive'').

To sidestep this difficulty, our key observation is that (surprisingly) we can get quite far by simply ignoring the randomness of all the $n-k$ unmarked points of $P$, relying entirely on the randomness of our $k$ marked points. 
That is to say, when trying to bound the probability that our $k$ marked points form an independent set of $H(P)$, we can imagine that an adversary chooses the positions of the $n-k$ unmarked points, and only work with the randomness of the $k$ marked points. 
This means that, for mutual independence, we do not need our sets $B_{x,y}$ to be disjoint; we just need the sets $B_{x,y}$ not to contain any marked points except $x,y$, and we need the pairs $\{x,y\}$ themselves to be disjoint from each other.

It may be helpful to reconsider the $d=2$ analysis in \cref{subsubsec:CPST}, to see that the entire proof can be adapted to only use the randomness of the marked points. 
Indeed, our starting point was the observation that for a pair of marked points $i,j\in\{1,\dots,n\}$, the probability that they form an edge is about $1/|i-j|$. 
This is true even if we allow an adversary to first choose the positions of the $|i-j|-1$ points between $i$ and $j$: the second coordinates of these points divide $[0,1]$ into $|i-j|$ regions, and we simply need $i$ and $j$ to fall in the same region, which happens with probability at least $1/|i-j|$ by a simple convexity argument.

So, in our proof of \cref{eq:alpha}, we completely abandon the notion
of ``consecutive'', and simply work with arbitrary disjoint pairs of marked
vertices. 
At each step, our family of ``suitable pairs'' will be a collection of about $k$ disjoint pairs $\{x,y\}$ such that $|B_{x,y}|\le (n/k)/(\log n)^{d-2+o(1)}$.
The (overwhelmingly likely) existence of such pairs is established by induction on the dimension. 
Indeed, recall that an edge in $H(P)$ is a pair of $d$-dimensional points with no points ``between'' them, and we have explained how to study such pairs using digit revelation of the $d$-th coordinate via ``suitable pairs'' defined on the first $d-1$ coordinates. In much the same way, a suitable pair is a pair of $(d-1)$-dimensional points with \emph{few} points ``between'' them, and we can study such pairs using digit revelation of the $(d-1)$-th coordinate via a $(d-2)$-dimensional notion of ``suitable pairs''. 
Each time we reduce the dimension, the threshold for being considered a ``suitable pair'' increases by a factor of $(\log n)^{1-o(1)}$, until we reach a 1-dimensional problem where the desired bound is trivial.

At a very high level, this describes the ideas in the proof of \cref{eq:alpha}: it can be viewed as an inductive version of the Chen--Pach--Szegedy--Tardos scheme, using only the randomness of the $k$ marked points. The details appear in \cref{sec:d>2}. 
We remark that the improvement for the $d=2$ case described in \cref{subsubsec:sharper} seems to fundamentally depend on the randomness of the unmarked points, so it is not clear to us how to combine the ideas in this subsection with the ideas in \cref{subsubsec:sharper}. 
We believe that closing the gap in \cref{thm:d>2} would require combining ideas from both approaches.

\section{Probabilistic preliminaries}\label{sec:prelims}

In this short section, we collect a couple of basic probabilistic preliminaries.
First, in the proofs of \cref{lemma: max degree,lemma: few triangles}, we will make use of a technique known as \emph{Poissonisation}, which has many applications in geometric probability (see~\cite{Pen03} for a detailed account of the subject).
In essence, it replaces a binomial process on $n$ points in the cube $[0,1]^d$ with a homogeneous Poisson point process $\Pp\subseteq [0,1]^d$ with intensity $n$. For the reader who is not familiar with this notion, it is not too important to rigorously define Poisson point processes; we only need the following two key properties.
\begin{enumerate}
    \item[(i)] For every measurable set $\Lambda\subseteq [0,1]^d$, $|\Lambda\cap \Pp|$ is a Poisson random variable with parameter $n\cdot\vol(\Lambda)$, where $\vol(\Lambda)$ is the volume of $\Lambda$.
    \item[(ii)] For every pair of disjoint measurable sets $\Lambda_1, \Lambda_2\subseteq [0,1]^n$, the random variables $|\Lambda_1\cap \Pp|$ and $|\Lambda_2\cap \Pp|$ are independent.
\end{enumerate}

One key observation to apply the technique of Poissonisation is that, for a Poisson random variable $X$ with parameter~$n$,
\begin{equation}\label{eq:Pois}
\mathbb P(X = n) = \frac{\e^{-n} n^n}{n!} = \Theta\bigg(\frac{1}{\sqrt{n}}\bigg).
\end{equation}
In particular, every property that holds with probability $1-o(n^{-1/2})$ in the Poissonised model described above also holds whp in the binomial model on $n$ vertices.

Finally, we need some standard large deviation bounds. The following simple Chernoff bound for Binomial and Poisson random variables appears as, for example, \cite[Theorems~A.1.12 and~A.1.15]{AS16}.

\begin{theorem}\label{thm:chernoff}
For a Binomial or Poisson random variable $X$ with mean $\mu$ and any $\varepsilon>0$, we have
\[\Pr[X\le (1-\varepsilon)\mu]\le \exp(-\varepsilon^2\mu/2),\quad\qquad \Pr[X\ge (1+\varepsilon)\mu]\le (\e^\varepsilon (1+\varepsilon)^{-(1+\varepsilon)})^\mu.\]
In particular, the latter inequality implies
\begin{equation}\label{eq:standard}
\Pr[X > 2\mu] \le 2^{-\mu/2}.
\end{equation}
\end{theorem}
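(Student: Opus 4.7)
The standard route is the Cram\'er--Chernoff (exponential moment) method, and both cases can be handled uniformly by first bounding the moment generating function (MGF). For $X\sim\on{Poisson}(\mu)$, one has exactly $\Ex[\e^{tX}]=\exp(\mu(\e^t-1))$ for every real $t$; for $X\sim\on{Bin}(n,p)$ with $\mu=np$, one has $\Ex[\e^{tX}]=(1-p+p\e^t)^n$, and the inequality $1+x\le \e^x$ gives $\Ex[\e^{tX}]\le \exp(\mu(\e^t-1))$. So both cases reduce to the same MGF bound, and the two tail inequalities can be proved once and for all under this common bound.

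The plan for the upper tail is: for any $t>0$, by Markov's inequality,
\[
\Pr[X\ge (1+\eps)\mu]=\Pr[\e^{tX}\ge \e^{t(1+\eps)\mu}]\le \exp\bigl(\mu(\e^t-1)-t(1+\eps)\mu\bigr),
\]
and choosing $t=\log(1+\eps)$ (the value that minimises the right-hand side) yields exactly $\bigl(\e^{\eps}(1+\eps)^{-(1+\eps)}\bigr)^{\mu}$. For the lower tail, I would run the analogous argument with $t<0$: writing $s=-t>0$, Markov's inequality applied to $\e^{-sX}$ and optimising at $s=-\log(1-\eps)$ gives
\[
\Pr[X\le (1-\eps)\mu]\le \bigl(\e^{-\eps}(1-\eps)^{-(1-\eps)}\bigr)^{\mu}=\exp\!\bigl(\mu(-\eps-(1-\eps)\log(1-\eps))\bigr).
\]
To turn this into the cleaner bound $\exp(-\eps^2\mu/2)$, I would use the numerical inequality $-\eps-(1-\eps)\log(1-\eps)\le -\eps^2/2$ for $0<\eps<1$, which follows by expanding $\log(1-\eps)=-\sum_{k\ge 1}\eps^k/k$ and observing that the resulting series $-\eps^2/2-\eps^3/6-\eps^4/12-\cdots$ is termwise nonpositive.

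Finally, the special case \cref{eq:standard} is obtained by substituting $\eps=1$ in the upper-tail bound, giving $\Pr[X>2\mu]\le (\e/4)^\mu$, and then checking the elementary inequality $\e/4\le 2^{-1/2}$ (equivalently $\e^2\le 8$). Essentially no step presents a genuine obstacle: the only slightly nontrivial ingredient is the Taylor-series inequality used to pass from the explicit Legendre-transform expression to the Gaussian-type bound $\exp(-\eps^2\mu/2)$, and that is a routine one-variable calculus check.
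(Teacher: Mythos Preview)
Your argument is correct and is exactly the standard Cram\'er--Chernoff computation. The paper, however, does not prove \cref{thm:chernoff} at all: it simply quotes the two tail bounds as standard, pointing to \cite[Theorems~A.1.12 and~A.1.15]{AS16}, and then records the consequence \cref{eq:standard} without further comment. So there is nothing to compare at the level of ideas; you have supplied a self-contained proof where the paper outsources to a reference.

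One very minor remark: the statement asserts the lower-tail bound for \emph{all} $\eps>0$, whereas your Taylor-series step is formulated for $0<\eps<1$. This is harmless, since for $\eps\ge 1$ the event $\{X\le(1-\eps)\mu\}$ is either $\{X=0\}$ (which has probability at most $\e^{-\mu}\le \e^{-\eps^2\mu/2}$ when $\eps=1$) or empty, but it would be worth a sentence to cover that range explicitly.
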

The following version of Bernstein's inequality is a consequence of, for example,  \cite[Theorem~2.8.1]{Vershynin18}.
\begin{theorem}\label{thm:bernstein}
    Let $X_1,\dots,X_n$ be i.i.d.\ exponential random variables with rate $r$ (and therefore, mean $1/r$). Then, for any $t\ge 0$,
    \[\Pr\Big[|X_1+\dots+X_n-n/r|\ge t\Big]\le 2\exp\left(-\Omega\left(\frac{t^2}{n/r^2+t/r}\right)\right).\]
\end{theorem}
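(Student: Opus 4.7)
My plan is to derive this directly from the general sub-exponential Bernstein inequality (for instance, Vershynin's Theorem 2.8.1, as cited); essentially all the work is in computing the sub-exponential Orlicz norm of the centred exponential $Y_i := X_i - 1/r$. Once that constant is identified, the claimed bound is obtained by a mechanical substitution.

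First, I would recall that an exponential random variable $X_i$ with rate $r$ has density $r\,\mathrm{e}^{-rx}$ on $[0,\infty)$ and moment generating function $\mathbb{E}[\mathrm{e}^{\lambda X_i}] = r/(r-\lambda)$ for $\lambda < r$. This explicit MGF (or equivalently the tail bound $\Pr[X_i \ge t] = \mathrm{e}^{-rt}$) makes it straightforward to check that $\|X_i\|_{\psi_1} = \Theta(1/r)$, where $\|\cdot\|_{\psi_1}$ is the usual sub-exponential Orlicz norm. Since centring at most doubles the $\psi_1$-norm, we also have $\|Y_i\|_{\psi_1} = \Theta(1/r)$.

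Next, I would invoke the standard Bernstein bound for sums of independent, mean-zero, sub-exponential random variables, which asserts
\[
\Pr\Bigg[\bigg|\sum_{i=1}^{n} Y_i\bigg| \ge t\Bigg] \le 2\exp\Bigg(-c\min\Bigg(\frac{t^2}{\sum_{i=1}^{n}\|Y_i\|_{\psi_1}^2},\ \frac{t}{\max_i \|Y_i\|_{\psi_1}}\Bigg)\Bigg).
\]
Substituting $\|Y_i\|_{\psi_1} = \Theta(1/r)$ gives $\sum_{i=1}^n \|Y_i\|_{\psi_1}^2 = \Theta(n/r^2)$ and $\max_i \|Y_i\|_{\psi_1} = \Theta(1/r)$. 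Using $\min(a,b) = \Theta(ab/(a+b))$, the exponent becomes $\Omega\bigl(t^2/(n/r^2 + t/r)\bigr)$, which is exactly the bound stated.

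The only care required is bookkeeping of absolute constants in the definition of the $\psi_1$-norm and in the quoted Bernstein bound; there is no conceptual obstacle. If one preferred a fully self-contained proof, the same argument can be executed by hand: bound each MGF by $\mathbb{E}[\mathrm{e}^{\lambda(X_i - 1/r)}] \le \exp(C\lambda^2/r^2)$ for $|\lambda|\le cr$ (using the explicit formula $r/(r-\lambda)$ and $\log(1/(1-x)) - x \le x^2$ for $|x|\le 1/2$), multiply MGFs by independence, apply Markov's inequality to $\mathrm{e}^{\lambda(\sum_i Y_i)}$, and optimise $\lambda$ in the two regimes $t\le n/r$ and $t\ge n/r$ to recover the two terms in the denominator.
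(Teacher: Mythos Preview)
Your proposal is correct and matches the paper's approach exactly: the paper does not prove this statement at all but simply cites \cite[Theorem~2.8.1]{Vershynin18}, and your argument is precisely the intended deduction from that result (with the additional detail of computing $\|X_i-1/r\|_{\psi_1}=\Theta(1/r)$ spelled out).
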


\section{The local statistics}\label{sec: local statistics}

In this section, we prove \cref{lemma: max degree,lemma: few triangles}, concerning local statistics of random box-Delaunay graphs. As discussed in \cref{sec:outline}, these lemmas imply \cref{eq:chi}, which provides the upper bounds on chromatic number and lower bounds on independence number in \cref{thm:d=2,thm:d>2}.

We start with some notation.
\begin{definition}
We denote by $o$ the origin of $\mb{R}^d$. 
For any measurable subset $A$ of $\mb{R}^d$, we write $\vol(A)$ for its volume.
Given two points $x,y \in \mb{R}^d$, define $\mr R[x,y]$ to be the {\em open rectangle}
with endpoints $x,y$, that is, the set of points $z$ such that $\min(x_i,y_i) < z_i < \max(x_i,y_i)$ for all $i=1,2,\dots,d$. 
\end{definition}
We often assume that no two points share any of the coordinates and no point has any coordinates equal to 0 or 1 (this happens with probability 1). 
Also, we remind the reader that, throughout this paper, $d$ is always treated
as a constant for the purpose of asymptotic notation.

\subsection{The maximum degree}\label{subsec:maxdegree}
First, we prove \cref{lemma: max degree}. 
In fact, we prove a stronger Poissonised version. Recall that, if $x,y$ form an edge in $G$, then $\mr R[x,y]\cap P = \emptyset$.

\begin{lemma} \label{lemma: degree in poisson}
    For every $d\ge 2$, there exists a constant $C=C(d)>0$ such that the following holds.
    Let $\Pp$ be a Poisson point process with intensity $n$ on the space $[0,1]^d$, and let $X$ be the set of points $x \in \Pp$ such that $\mr R[o,x]\cap \Pp=\emptyset$.
    Then, $\Pr[|X| > C(\log n)^{d-1}] = o(n^{-2})$.
\end{lemma}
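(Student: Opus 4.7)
The plan is to work in the Poissonised model throughout, aiming to establish $\Pr[|X| > C(\log n)^{d-1}] = o(n^{-2})$ and then invoke \cref{eq:Pois} to transfer to the binomial setting. The starting observation is the Poisson void identity $\Pr[x \in X \mid x \in \Pp] = \exp(-n \prod_i x_i)$, which via Mecke's formula gives
\[
    \Ex|X| = \int_{[0,1]^d} n \exp(-n \textstyle\prod_i x_i)\,dx = \Theta\bigl((\log n)^{d-1}\bigr),
\]
the evaluation following from the substitution $u_i = -\log x_i$ that reduces the integral to a Gamma-type one. This pins down the correct order of magnitude, and the remainder of the argument is about concentration.

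My next step is a truncation to the ``low-product'' region $R = \{x : \prod_i x_i \le t_0\}$ with $t_0 = C_1 (\log n)/n$. The expected number of $x \in X$ with $\prod_i x_i > t_0$ is at most $\int_{\prod_i x_i > t_0} n \exp(-n \prod_i x_i)\,dx = n^{1 - C_1 + o(1)}$, which is $o(n^{-2})$ for $C_1$ a sufficiently large absolute constant; Markov then lets me restrict attention to $R$ at the cost of a $o(n^{-2})$ event. I would then slice $R$ into dyadic layers $L_k = \{x : 2^{-k-1} t_0 < \prod_i x_i \le 2^{-k} t_0\}$ for $k \ge 0$. The same change of variable gives $\vol(L_k) = \Theta(2^{-k}(\log n)^d / n)$, so $n \vol(L_k) = \Theta(2^{-k} (\log n)^d)$.

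The concentration argument then splits at the threshold $k^* := \lceil \log_2(C_1 \log n) \rceil$, where $n \prod_i x_i$ transitions from $\gg 1$ to $\ll 1$. For ``deep'' layers $k \ge k^*$, each point has a constant lower-bounded probability of being minimal, so I would simply bound $|X \cap L_k| \le |\Pp \cap L_k|$; the total Poisson count $\sum_{k \ge k^*} |\Pp \cap L_k|$ is itself Poisson with mean $\Theta((\log n)^{d-1})$, and the tail bound from \cref{thm:chernoff} gives the required $o(n^{-2})$ control. For ``shallow'' layers $k < k^*$, each point in $L_k$ has probability at most $\exp(-\Omega(2^{k^* - k}))$ of being minimal, so the expected count $\Ex|X \cap L_k|$ decays doubly exponentially in $k^* - k$ and summing over all shallow layers gives only $O((\log n)^{d-1})$ in expectation.

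The main obstacle I expect is the shallow-layer case: although the expected number of minimals per shallow layer is well-controlled, the bound $|X \cap L_k| \le |\Pp \cap L_k|$ is far too weak there (the total Poisson count across shallow layers can reach $\Theta((\log n)^d)$). To upgrade the expectation bound to a high-probability bound I would further sub-decompose each shallow $L_k$ into disjoint sub-regions on which the Poisson count has mean $O(1)$, exploit the Poisson independence across these sub-regions, and apply a Chernoff-type estimate together with a union bound over sub-regions. This is where the ``exploration'' philosophy from the outline seems most essential: by revealing the process sub-region by sub-region, one rules out most of each shallow layer as containing no minimals before using the remaining randomness to bound the surviving count.
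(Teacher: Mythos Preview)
Your layer decomposition by the product $\prod_i x_i$ is a reasonable first-moment setup, and the deep-layer bound (where $n\prod_i x_i = O(1)$, using $|X \cap L_k| \le |\Pp \cap L_k|$) goes through. But the shallow layers carry the entire difficulty, and your treatment there is a sketch rather than an argument. The specific obstacle with ``sub-decompose and apply Chernoff'': Poisson independence across disjoint sub-regions $S_j$ controls the counts $|\Pp \cap S_j|$, not $|X \cap S_j|$. The event $x \in X$ depends on $\Pp \cap \mr R[o,x]$, which stretches all the way to the origin and is nowhere near local to $S_j$; the minimality indicators are in fact strongly \emph{positively} correlated (for incomparable $x,y$ with nearly-nested boxes one has $\Pr[x,y\in X] \approx \Pr[x \in X]$, not $\Pr[x\in X]\Pr[y\in X]$), so no off-the-shelf Chernoff bound applies to their sum. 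Your last sentence points in the right direction---first certify regions as inactive, then count survivors using fresh randomness---but you give no concrete mechanism for the first stage.

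The paper's route is structurally different from a static layer decomposition. It runs an exploration indexed by $(i_1,\dots,i_{d-1})\in\{1,\dots,O(\log n)\}^{d-1}$: for each index, one sweeps the box $[0,2^{-i_1}]\times\dots\times[0,2^{-i_{d-1}}]$ upward in the $d$-th coordinate until hitting a point $x(i_1,\dots,i_{d-1})\in\Pp$, which then certifies a large region as containing no element of $X$. The $O((\log n)^{d-1})$ sweep-hit points account for part of $X$; the residual region has volume governed by sums of independent exponential sweep lengths, hence by Bernstein's inequality is $O((\log n)^{d-1}/n)$ with the required probability, and the conditional Poisson count there finishes the job. A static dyadic-box argument closer to your plan \emph{does} appear elsewhere in the paper (proof of \cref{lemma: edges of far pairs in Poisson}): one observes that $x\in X\cap R(i_1,\dots,i_d)$ forces the \emph{disjoint} boxes $R(i_1+1,\dots,i_d+1)$ to be empty, and those emptiness events are genuinely independent. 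But as written there this incurs an extra $\log\log n$ factor from the number of relevant weight levels, which for $d=2$ would overshoot the target $(\log n)^{d-1}$.
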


Before proving \cref{lemma: degree in poisson}, we show how to deduce \cref{lemma: max degree} from it.

\begin{proof}[Proof of \cref{lemma: max degree}]
    Let $C=C(d)>0$ be the constant in \cref{lemma: degree in poisson}.
    Denote by $p_1,\dots,p_n$ the $n$ uniformly random points in $P$.
    For every $i \in \{1,\ldots,n\}$, let $X_i$ be the set of points $p_j$, $j \neq i$, such that $\mr R[p_i,p_j] \cap P=\emptyset$. Note that $|X_i|$ is precisely the degree of $p_i$ in $G$, so it suffices to show that $\Pr[|X_i|> 2^d\cdot C(\log n)^{d-1}] = o(n^{-1})$ for each $i$.
    To this end, fix any $i$ and fix an outcome of the $i$-th point $p_i \in [0,1]^d$. Let $\Pp$ and $\Pp'$ be Poisson point processes with intensity $n$ on the spaces $[0,1]^d$ and $p_i+[-1,1]^d$, respectively. (Here, we write $p_i+[-1,1]^d$ for the set of points $x\in \mb R^d$ such that $x-p_i\in [-1,1]^d$).
    Define $$
        X_i(\Pp) := \left\{ x \in \Pp: \mr R[p_i,x]\cap \Pp = \emptyset \right\}\quad  \text{ and } \quad 
        X_i(\Pp') := \left\{ x \in \Pp': \mr R[p_i,x]\cap \Pp' = \emptyset \right\}.
    $$
    Observe that the distribution of $\{p_j:j\ne i\}$ is identical to that of $\Pp$ conditioned on the event $|\Pp| = n-1$, which happens with probability $\Theta(n^{-1/2})$ (recalling \cref{eq:Pois}).
    Using the fact that $|X_i(\Pp')|$ stochastically dominates $|X_i(\Pp)|$, we obtain
    \begin{equation} \nonumber
      \begin{aligned}
        \Pr\left[|X_i| > 2^d\cdot C(\log n)^{d-1} \big| p_i \right]
        &=\, \Pr\Big[|X_i(\Pp)| > 2^d \cdot C(\log n)^{d-1} \big| |\Pp|=n-1, p_i \Big] \\
        &\le\, \frac{\Pr\Big[|X_i(\Pp)| > 2^d\cdot C(\log n)^{d-1} \big| p_i \Big]}{\Pr\big[|\Pp|=n-1 \big]} \\
        &=\, O(\sqrt{n}) \cdot \Pr\left[|X_1(\Pp')| > 2^d\cdot C(\log n)^{d-1}  \big\vert p_i\right].
      \end{aligned}
    \end{equation}
    By applying \cref{lemma: degree in poisson} in all the $2^d$ orthants of $[-1,1]^d$, we see that $\Pr\left[|X_1(\Pp')| > 2^d\cdot C(\log n)^{d-1} \big| p_i\right] = o(n^{-2})$.
    Hence, $\Pr\!\left[|X_i| > 2^d\cdot  C(\log n)^{d-1} \big| p_i \right] = o(n^{-3/2})$. Since this is true for any outcome of $p_i$, the desired result follows.
\end{proof}

Before getting into the details of the proof of \cref{lemma: degree in poisson}, we sketch how it works when $d=2$.

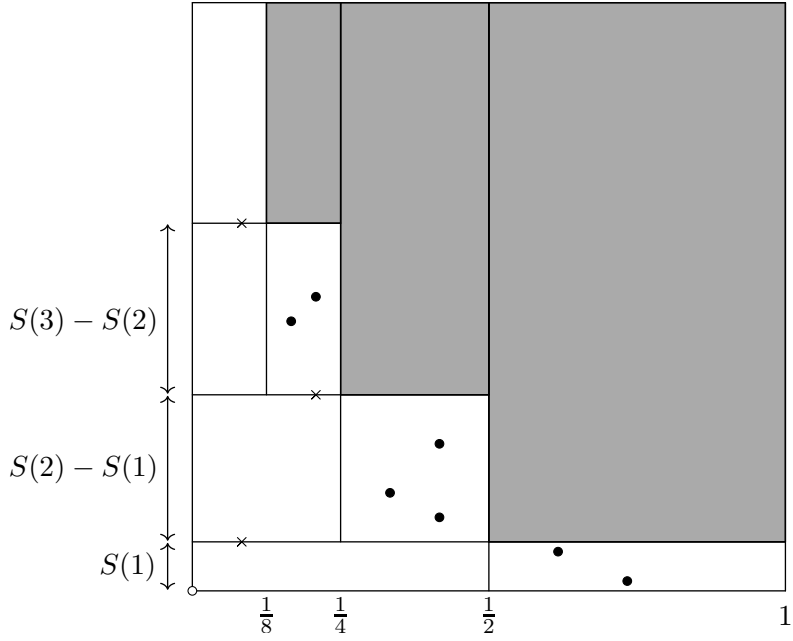
\begin{figure}
    \centering
    \begin{tikzpicture}[scale=0.65,line cap=round,line join=round,x=1cm,y=1cm]
        \clip(-19.5,-5.85) rectangle (12.247622119457375,7.1);
        \fill[line width=0.5pt,fill=black,fill opacity=0.1] (-7,7) -- (-7,-4) -- (-1,-4) -- (-1,7) -- cycle;
        \fill[line width=0.5pt,fill=black,fill opacity=0.1] (-10,7) -- (-10,-1) -- (-7,-1) -- (-7,7) -- cycle;
        \draw [fill={rgb:black,4;white,8}] (-11.5,2.5) rectangle (-10,7);
        \draw [fill={rgb:black,4;white,8}] (-10,-1) rectangle (-7,7);
        \draw [fill={rgb:black,4;white,8}] (-7,-4) rectangle (-1,7);

        \draw [line width=0.5pt] (-13,7)-- (-13,-5);
        \draw [line width=0.5pt] (-13,-5)-- (-1,-5);
        \draw [line width=0.5pt] (-13,-4)-- (-1,-4);
        \draw [line width=0.5pt] (-1,-4)-- (-1,-5);
        \draw [line width=0.5pt] (-7,-4)-- (-7,7);
        \draw [line width=0.5pt] (-7,7)-- (-1,7);
        \draw [line width=0.5pt] (-1,7)-- (-1,-4);
        \draw [line width=0.5pt] (-7,-4)-- (-7,-5);
        \draw [line width=0.5pt] (-13,7)-- (-7,7);
        \draw [line width=0.5pt] (-7,7)-- (-7,-4);
        \draw [line width=0.5pt] (-7,-4)-- (-1,-4);
        \draw [line width=0.5pt] (-1,-4)-- (-1,7);
        \draw [line width=0.5pt] (-1,7)-- (-7,7);
        \draw [line width=0.5pt] (-13,-1)-- (-10,-1);
        \draw [line width=0.5pt] (-10,-1)-- (-7,-1);
        \draw [line width=0.5pt] (-10,-1)-- (-10,-4);
        \draw [line width=0.5pt] (-10,7)-- (-10,-1);
        \draw [line width=0.5pt] (-10,7)-- (-10,-1);
        \draw [line width=0.5pt] (-10,-1)-- (-7,-1);
        \draw [line width=0.5pt] (-7,-1)-- (-7,7);
        \draw [line width=0.5pt] (-7,7)-- (-10,7);
        \draw [line width=0.5pt] (-11.5,-1)-- (-11.5,2.5);
        \draw [line width=0.5pt] (-11.5,2.5)-- (-13,2.5);
        \draw [line width=0.5pt] (-11.5,2.5)-- (-10,2.5);
        \draw [line width=0.5pt] (-11.5,2.5)-- (-11.5,7);
        \draw [line width=0.5pt] (-11.5,7)-- (-11.5,2.5);
        \draw [line width=0.5pt] (-11.5,2.5)-- (-10,2.5);
        \draw [line width=0.5pt] (-10,2.5)-- (-10,7);
        \draw [line width=0.5pt] (-10,7)-- (-11.5,7);
        \draw [<->,line width=0.5pt] (-13.5,-4.98) -- (-13.5,-4.02);
        \draw [<->,line width=0.5pt] (-13.5,-3.98) -- (-13.5,-1.02);
        \draw [<->,line width=0.5pt] (-13.5,-0.98) -- (-13.5,2.48);
        \begin{scriptsize}
            \draw [color=black] (-12,2.5)-- ++(-2.5pt,-2.5pt) -- ++(5pt,5pt) ++(-5pt,0) -- ++(5pt,-5pt);
            \draw [color=black] (-10.5,-1)-- ++(-2.5pt,-2.5pt) -- ++(5pt,5pt) ++(-5pt,0) -- ++(5pt,-5pt);
            \draw [fill=white] (-13,-5) circle (2.5pt);
            \draw [fill=black] (-8,-2) circle (2.5pt);
            \draw [fill=black] (-9,-3) circle (2.5pt);
            \draw [fill=black] (-8,-3.5) circle (2.5pt);
            \draw [fill=black] (-11,0.5) circle (2.5pt);
            \draw [fill=black] (-10.5,1) circle (2.5pt);
            \draw [fill=black] (-5.6,-4.2) circle (2.5pt);
            \draw [fill=black] (-4.2,-4.8) circle (2.5pt);
            \draw [color=black] (-12,-4)-- ++(-2.5pt,-2.5pt) -- ++(5pt,5pt) ++(-5pt,0) -- ++(5pt,-5pt);
            \draw[color=black] (-14.3,-4.5) node {\large{$S(1)$}};
            \draw[color=black] (-15.2,-2.5) node {\large{$S(2)-S(1)$}};
            \draw[color=black] (-15.2,0.5) node {\large{$S(3)-S(2)$}};
            \draw[color=black] (-1,-5.5) node {\large{$1$}};
            \draw[color=black] (-7,-5.5) node {\large{$\tfrac{1}{2}$}};
            \draw[color=black] (-10,-5.5) node {\large{$\tfrac{1}{4}$}};
            \draw[color=black] (-11.5,-5.5) node {\large{$\tfrac{1}{8}$}};
        \end{scriptsize}
    \end{tikzpicture}
    \caption{An illustration of the 2-dimensional case. The points $x(1)$, $x(2)$, $x(3)$ are denoted by crosses,
    and other points that could possibly contribute to $X$ (i.e., points $x\in \Pp$ with $\mr R[o,x]\cap \Pp=\emptyset$) are denoted by black dots. In this example, only the bottom-most two dots actually contribute to $X$.
    The points $x(1),x(2),x(3)$ ensure that no point in the grey area contributes to $X$.}    \label{figure: max degree}
\end{figure}

\vspace{0.5em}

We consider an exploration process that gradually reveals some of
the points in our Poisson process $\Pp\in[0,1]^{2}$, by ``sweeping''
a gradually shrinking horizontal line segment from the bottom of
$[0,1]^{2}$ to the top. This process is illustrated in \cref{figure: max degree}. 

First, we start with the line segment of length $1/2$ between the points $o=(0,0)$ and $(1/2,0)$. 
Imagine that this segment continuously sweeps upwards until it encounters a point $x(1)\in \Pp$. 
At this moment, we shrink our sweeping segment by a factor of a half, so it is now a line segment between\footnote{Recall that $x(1)_2$ stands for the second coordinate of the point $x(1) \in \mb{R}^2$.} $(0,x(1)_{2})$ and $(1/4,x(1)_{2})$. 
This segment continues to sweep upwards until it encounters the next point $x(2)\in \Pp$ (so $x(2)_{1}\le1/4$). 
We again shrink our sweeping segment by a factor of a half (so it is now a line segment between $(0,x(2)_{2})$ and $(1/8,x(2)_{2})$), and so on. 
We continue this process until the sweeping segment reaches the top of the square $[0,1]$. 
Write $x(1),\dots,x(m)$ for the sequence of points encountered by the process.

The purpose of this exploration process is that it defines large regions of the
square $[0,1]^{2}$ which cannot possibly contribute to our random
variable $X$, while revealing very little information about $\Pp$
(here, it is crucial that we are working with a Poisson process, so the
behaviour within disjoint regions is completely independent). Indeed,
writing $S(0)=0$ and $S(i)=x(i)_{2}$ for $i\in \{1,\dots,m\}$, we make several observations.
\begin{itemize}
    \item Note that regions of the form $(2^{-i},1]\times (S(i),1]$ cannot possibly contribute to $X$ since, for any point $x\in (2^{-i},1]\times (S(i),1]$, the rectangle $\mathrm{R}[o,x]$ contains $x(i)\in P_{i}$. 
    The union of these regions is illustrated in grey in \cref{figure: max degree}.     \item We have revealed all the points of $\Pp$ inside regions of the
    form $[0,2^{-i}]\times[0,S(i)]$, and in the region $[0,2^{-m}]\times [S(m),1]$. Namely, these are the points $x(1),\dots,x(m)$. 
        The increments $S(i)-S(i-1)$ are exponentially distributed with mean $2^{i}/n$,        and a standard concentration inequality can be used to show that we are very likely to have $m = O(\log n)$.
    \item This just leaves the regions of the form $(2^{-i},2^{-(i-1)}]\times(S(i-1),S(i)]$.
    It is easy to describe the total area $A$ of these regions (namely,
    $A$ can be bounded by a certain weighted sum of independent exponential
    distributions, which turns out to have mean $O(\log n/n)$). Our exploration process has
    revealed nothing at all about the points of $\Pp$ inside these
    regions, so the number of such points conditionally has a Poisson distribution with
    mean $A$. With standard concentration inequalities, we can show that
    this number, and hence $|X|$, is very likely to be $O(\log n)$.
\end{itemize}

For general $d \ge 2$, we would like to generalise the above process by sweeping a gradually shrinking $(d-1)$-dimensional box perpendicular to the $d$-th coordinate.
For $d\ge 3$, there are multiple different dimensions which may be shrunk by a factor of a half, and we have to consider all such choices ``in parallel''. 
That is to say, instead of a sequence of points $x(i)\in [0,1]^2$ for an integer $i\ge 1$ (where we keep track of the second coordinate $x(i)_2=S(i)\in [0,1]$), we have a \emph{$(d-1)$-dimensional array} of points $x(i_1,\dots,i_{d-1})\in [0,1]^d$, for integers $i_1,\dots,i_{d-1}\ge 1$ (and we keep track of the $d$-th coordinate $x(i_1,\dots,i_{d-1})_{d}=S(i_1,\dots,i_{d-1})\in [0,1]$). 
The most convenient way to describe this formally is via a \emph{recursive} exploration process, where $x(i_1,\dots,i_{d-1})$ is defined in terms of the $x(j_1,\dots,j_{d-1})$ for which $(j_1,\dots,j_{d-1})$ ``precedes'' $(i_1,\dots,i_{d-1})$.

For convenience, write $(i_1,\dots,i_{d-1}) \preceq (j_1,\dots,j_{d-1})$ if $i_k\le j_k$ for all $k\in \{1,\dots,d-1\}$, and write $(i_1,\dots,i_{d-1}) \prec (j_1,\dots,j_{d-1})$ if moreover $(i_1,\dots,i_{d-1})\neq (j_1,\dots,j_{d-1})$. For each $i_1,\dots,i_d\ge 1$:
\begin{itemize}
    \item Let\footnote{In defining $S'(1,\dots,1)$, we use the convention that the maximum of the empty set is zero.} $$
            S'(i_1,\dots,i_{d-1}) := \max_{(j_1,\dots,j_{d-1})\prec (i_1,\dots,i_{d-1})} S(j_1,\dots, j_{d-1}).
        $$
        This can be interpreted as the ``furthest along the $d$-th coordinate we have reached so far''.
    \item Let $x(i_1,\dots,i_{d-1})$ be the point 
        \[x \in \Pp \cap \Big([0,2^{-i_1}]\times\dots\times [0,2^{-i_{d-1}}]\times (S'(i_1,\dots,i_{d-1}),1]\Big)
        \]
        with the minimum possible value of $x_d$, and let $S(i_1,\dots,i_{d-1})$ be this value of $x_d$. (If no such $x$ exists, let $S(i_1,\dots,i_{d-1})=1$ and leave $x(i_1,\dots,i_{d-1})$ undefined).
        \item Let $T(i_1,\dots,i_{d-1}):= S(i_1,\dots,i_{d-1})-S'(i_1,\dots,i_{d-1})$.
\end{itemize}

According to this procedure, given $S'(i_1,\dots,i_{d-1})$, there exists an exponential random variable $T$ with mean $2^{i_1+\dots+i_{d-1}}/n$ such that $T(i_1,\dots,i_{d-1})= \min(T, 1- S'(i_1,\dots,i_{d-1}))$.
Additionally, the points in $X$ which are not of the form $x(i_1,\dots,i_{d-1})$ are guaranteed to lie in a certain region of $[0,1]^d$ described as follows (this corresponds to the unshaded region in \cref{figure: max degree}).

\begin{claim} \label{claim: partition of the cube}
Write $R(i_1,\dots,i_{d-1})$ for the box $[0,2^{-i_1}]\times\dots\times [0,2^{-i_{d-1}}]$. Then, with probability $1$, the points in $X \setminus \left\{x(i_1,\dots,i_{d-1}): i_1,\dots,i_{d-1}\ge 1\right\}$ all lie in 
    \[               
        \bigcup_{j_1,\ldots,j_{d-1}\ge 1} 
            \big( R(j_1-1,\dots,j_{d-1}-1)\setminus R(j_1,\dots,j_{d-1}) \big)
            \times \big( S'(j_1,\dots,j_{d-1}),S(j_1,\ldots,j_{d-1}) \big].
    \]
\end{claim}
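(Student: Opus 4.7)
The plan is to fix an arbitrary $y\in X$ not of the form $x(i_1,\dots,i_{d-1})$, work under the probability-one event that all coordinates of all points of $\Pp$ are distinct and nonzero, and explicitly produce a tuple $(j_1,\dots,j_{d-1})\in\pint^{d-1}$ placing $y$ in the required cell. For each $k\in\{1,\dots,d-1\}$, let $J_k\ge 1$ be the unique integer with $2^{-J_k}<y_k\le 2^{-(J_k-1)}$; then the projection of $y$ onto the first $d-1$ coordinates automatically belongs to $R(J_1-1,\dots,J_{d-1}-1)\setminus R(J_1,\dots,J_{d-1})$, and the only real work is to identify a $(j_1,\dots,j_{d-1})\preceq (J_1,\dots,J_{d-1})$ with $j_{k'}=J_{k'}$ for at least one $k'$ and $y_d\in(S'(j_1,\dots,j_{d-1}),S(j_1,\dots,j_{d-1})]$.

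My first step would be to observe $y_d\le S(J_1,\dots,J_{d-1})$: if instead $S(J_1,\dots,J_{d-1})<y_d\le 1$, then $x(J_1,\dots,J_{d-1})$ exists with $x(J_1,\dots,J_{d-1})_k\le 2^{-J_k}<y_k$ for $k\le d-1$ and $x(J_1,\dots,J_{d-1})_d=S(J_1,\dots,J_{d-1})<y_d$, so $x(J_1,\dots,J_{d-1})\in\mr R[o,y]\cap\Pp$, contradicting $y\in X$. This guarantees that the finite set $U:=\{(i_1,\dots,i_{d-1}): (i_1,\dots,i_{d-1})\preceq(J_1,\dots,J_{d-1}),\, S(i_1,\dots,i_{d-1})\ge y_d\}$ is non-empty, and I let $(j_1,\dots,j_{d-1})$ be any $\preceq$-minimal element of $U$. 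Any strict $\preceq$-predecessor of such a $(j_1,\dots,j_{d-1})$ also satisfies $\preceq (J_1,\dots,J_{d-1})$, hence is excluded from $U$ by minimality, so $S(i_1,\dots,i_{d-1})<y_d$ for every $(i_1,\dots,i_{d-1})\prec(j_1,\dots,j_{d-1})$; by the definition of $S'$ this gives exactly $S'(j_1,\dots,j_{d-1})<y_d\le S(j_1,\dots,j_{d-1})$.

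The last and most delicate step, which I expect to be the main obstacle, is to check that $j_{k'}=J_{k'}$ for at least one $k'$; this is what ensures $y\notin R(j_1,\dots,j_{d-1})$ rather than merely $y\in R(j_1-1,\dots,j_{d-1}-1)$. I would argue by contradiction: if $j_k\le J_k-1$ for all $k$, then $y_k\le 2^{-(J_k-1)}\le 2^{-j_k}$ for every $k\le d-1$ while $y_d>S'(j_1,\dots,j_{d-1})$, so $y$ itself lies in the region $[0,2^{-j_1}]\times\dots\times[0,2^{-j_{d-1}}]\times (S'(j_1,\dots,j_{d-1}),1]$ out of which $x(j_1,\dots,j_{d-1})$ is chosen as a minimiser of the $d$-th coordinate. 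The selection rule then forces $x(j_1,\dots,j_{d-1})_d\le y_d$, and combined with $y_d\le S(j_1,\dots,j_{d-1})=x(j_1,\dots,j_{d-1})_d$ together with the distinctness of $d$-th coordinates across $\Pp$, this gives $y=x(j_1,\dots,j_{d-1})$, contradicting the assumption on $y$. Hence some $k'$ satisfies $j_{k'}=J_{k'}$, and putting the three steps together places $y$ in the required cell of the partition.
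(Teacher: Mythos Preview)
Your proof is correct and follows essentially the same approach as the paper. The only cosmetic differences are that the paper selects $(j_1,\dots,j_{d-1})$ by minimising $S(j_1,\dots,j_{d-1})$ over the relevant set rather than taking a $\preceq$-minimal element as you do, and the paper phrases the final step directly as ``$y\notin R(j_1,\dots,j_{d-1})$, else $y=x(j_1,\dots,j_{d-1})$'' rather than via your equivalent contrapositive ``some $j_{k'}=J_{k'}$''; your choice of $\preceq$-minimality arguably makes the derivation of $S'(j_1,\dots,j_{d-1})<y_d$ a touch cleaner.
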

\begin{proof}
    Consider any $x\in X$ which is not one of the $x(i_1,\dots,i_{d-1})$. We show that $x$ lies in the above region.
    
    First, for all $k \in \{1,\dots,d\}$, let $i_k$ the unique integer such that $2^{-i_k} < x_k\le 2^{1-i_k}$ (recall that we may assume all coordinates of points in $\Pp$ are strictly between zero and one).
    
    If we had $x_d > S(i_1,\dots,i_{d-1})$, then $x(i_1,\dots,i_{d-1})$ would lie in $\mr R[o,x]$, which would contradict our assumption that $x \in X$, causing a contradiction. Hence, $x_d \le S(i_1,\dots,i_{d-1})$.
    
    Let $(j_1,\dots,j_{d-1}) \preceq (i_1,\dots,i_{d-1})$ be such that $S(j_1,\dots,j_{d-1})$ is minimised subject to the condition $x_d \le S(j_1,\dots,j_{d-1})$.
    This minimality implies $x_d > S'(j_1,\dots,j_{d-1})$, so 
    \begin{equation}\label{eq:xinR}
        \begin{split}
            x \in R(i_1-1,\dots,i_{d-1}-1)\times 
            &(S'(j_1,\dots,j_{d-1}), S(j_1,\dots,j_{d-1})]\\
            &\subseteq R(j_1-1,\dots,j_{d-1}-1)\times 
            (S'(j_1,\dots,j_{d-1}), S(j_1,\dots,j_{d-1})].
        \end{split}
    \end{equation}
    Finally, observe that $x \notin R(j_1,\dots,j_{d-1})\times (S'(j_1,\dots,j_{d-1}), S(j_1,\dots,j_{d-1})]$. 
    Otherwise, this would mean that $x=x(j_1,\dots,j_{d-1})$, contradicting the definition of $x$. 
        \end{proof}

The next claim tells us that, with very high probability, once one of the $i_k$ reaches $2\ceil{3\log_2 n}$, we have finished sweeping through the entire hypercube $[0,1]^d$.

\begin{claim}\label{claim: finite steps of max degree}
    Let $m=2\ceil{3\log_2 n}$. With probability $1-n^{-\omega(1)}$, for every $(d-1)$-tuple of integers $(i_1,\ldots,i_{d-1})\in \{1,\ldots,m\}^{d-1}$ such that $\max(i_1,\ldots,i_{d-1}) = m$, it holds that $S(i_1,\ldots,i_{d-1}) = 1$.
\end{claim}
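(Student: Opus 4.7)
The plan is to observe that, for the purposes of this claim, it is enough to know that a polynomially thin subset of $[0,1]^d$ contains no point of $\Pp$, and this can be handled with a single volume estimate plus the Poisson avoidance probability.

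The key geometric observation is the following. If $(i_1,\ldots,i_{d-1})\in\{1,\ldots,m\}^{d-1}$ satisfies $i_k = m$ for some $k\in \{1,\ldots,d-1\}$, then the box $[0,2^{-i_1}]\times\cdots\times[0,2^{-i_{d-1}}]\times(S'(i_1,\ldots,i_{d-1}),1]$ used to define $x(i_1,\ldots,i_{d-1})$ is contained in the slab $W_k := \{x \in [0,1]^d : x_k \le 2^{-m}\}$. Consequently, if $\Pp \cap W_k = \emptyset$, then this box contains no point of $\Pp$, so $x(i_1,\ldots,i_{d-1})$ is undefined and $S(i_1,\ldots,i_{d-1}) = 1$ by the convention in the construction. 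Letting $W := \bigcup_{k=1}^{d-1} W_k$, the single event $\{\Pp \cap W = \emptyset\}$ therefore simultaneously guarantees $S(i_1,\ldots,i_{d-1})=1$ for \emph{every} tuple with $\max(i_1,\ldots,i_{d-1}) = m$.

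It then remains to bound $\Pr[\Pp \cap W \ne \emptyset]$ using the Poisson property. Since $m \ge 6\log_2 n$, we have $2^{-m} \le n^{-6}$, so $\vol(W) \le (d-1)n^{-6}$. By property~(i) from \cref{sec:prelims}, $|\Pp\cap W|$ is Poisson with mean at most $(d-1)n^{-5}$, and hence $\Pr[\Pp \cap W \ne \emptyset] \le (d-1) n^{-5}$; this is well within the tolerance needed for the subsequent union bound in the proof of \cref{lemma: degree in poisson}. There is no real obstacle here: the whole argument reduces to the elementary containment above together with a Poisson avoidance probability, and the specific value $m = 2\lceil 3\log_2 n\rceil$ is chosen merely to make $n\cdot 2^{-m}$ comfortably polynomially small in $n$.
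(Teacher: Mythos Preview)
Your argument is sound and noticeably more elementary than the paper's: you bypass the exponential waiting-time structure entirely and reduce everything to the single observation that any box with some $i_k=m$ sits inside the slab $\{x_k\le 2^{-m}\}$, so a Poisson avoidance bound on a region of volume $O(n^{-6})$ finishes the job. This is a clean shortcut.

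However, note the quantitative discrepancy. Your bound is $\Pr[\Pp\cap W\ne\emptyset]\le (d-1)n^{-5}$, which is $1-O(n^{-5})$, not the $1-n^{-\omega(1)}$ asserted in the claim. You acknowledge this and correctly point out that $O(n^{-5})$ is ample for the application in \cref{lemma: degree in poisson} (which only needs $o(n^{-2})$), so nothing downstream breaks. But as a proof of the claim \emph{as stated}, it falls short.

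The paper achieves the genuinely superpolynomial bound by a different mechanism: it reduces (via monotonicity of $S$) to the ``axis'' tuples $(m,1,\dots,1),\dots,(1,\dots,1,m)$, writes $S(m,1,\dots,1)$ as a capped sum of independent exponentials $T_i'$ with rates $2^{2-d-i}n$, and observes that for $i>m/2$ these rates are below $1/n$, so each $T_i'$ individually exceeds $1$ with probability at least $e^{-1/n}$. The probability that \emph{all} $m/2\ge 3\log_2 n$ of them are below $1$ is then at most $(1-e^{-1/n})^{m/2}\le n^{-3\log_2 n}=n^{-\omega(1)}$. So the paper leverages the \emph{number} of independent trials (growing like $\log n$) rather than just the total volume; your volume argument cannot see this and is inherently limited to a polynomial bound.

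In short: your route is simpler and suffices for the paper's purposes, but to match the stated $n^{-\omega(1)}$ you would need to exploit the $\Theta(\log n)$ independent stages of the sweep, as the paper does.
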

\begin{proof}
    It is sufficient to show that $\Pr[S(m,1,\ldots,1) < 1] < n^{-\omega(1)}$: indeed, the minimum of 
    \begin{equation}\label{eq: 11m}
        S(m,1,\ldots,1),\; S(1,m,1,\ldots,1), \ldots,\; S(1,\ldots,1,m)  
    \end{equation}
    bounds from below each $S(i_1,\ldots,i_{d-1})$ with $\max(i_1,\ldots,i_{d-1}) = m$, and the $d-1$ random variables in~\eqref{eq: 11m} all have the same distribution.
    In order to bound $\Pr[S(m,1,\dots,1)<1]$ from above, we observe that there is a family of independent exponential random variables $(T_i')_{i=1}^m$ with rates $2^{2-d-i}n$, respectively, such that $S(i,1,\dots,1)=\min(1,T_1'+T_2'+\dots+T_i')$ for every $i \in \{1,\ldots,m\}$.
    Notice that $2^{1-d-m/2}n \le n^{-1}$ and, hence, the probability that $T_{m/2+1}',\dots,T_m'$ are all less than 1 is bounded from above by $(1-\e^{-1/n})^{m/2} = n^{-\omega(1)}$.
    Therefore, $\Pr[S(m,1\dots,1) < 1] = \Pr[T_1'+\dots+T_m' < 1] = n^{-\omega(1)}$, as desired.
\end{proof}
We are now sufficiently equipped to prove \cref{lemma: degree in poisson}.
\begin{proof}[Proof of \cref{lemma: degree in poisson}]
    Recall that we assume all points in $\Pp$ lie in $(0,1)^d$ and no two points share any coordinate. 
    Let $m=2\ceil{3\log_2 n}$. 
    There are at most $m^{d-1}$ points of the form $x(i_1,\dots,i_{d-1})$, where $i_1,\dots,i_{d-1}\le m$, so by \cref{claim: partition of the cube,claim: finite steps of max degree}, with probability $1-n^{-\omega(1)}$, we have
    \[|X|\le m^{d-1}+\sum_{1 \le i_1,\dots,i_{d-2} \le m} |X(i_1,\dots,i_{d-2})|,\]
    where 
    $X(i_1,\dots,i_{d-2}) := \bigcup_{j=1}^m (X\cap \Lambda_j)$ with $\Lambda_j = \Lambda_j(i_1,\ldots,i_{d-2})$ defined to be the set 
    \begin{equation*}
        \big(R(i_1-1,\dots,i_{d-2}-1,j-1)\setminus R(i_1,\dots,i_{d-2},j)\big)\times \big(S'(i_1,\ldots,i_{d-2},j), S(i_1,\ldots,i_{d-2},j)\big].
    \end{equation*}

We will study each $|X(i_1,\dots,i_{d-2})|$ separately. Fix any $(i_1,\dots,i_{d-2})\in \{1,\dots,m\}^{d-2}$, and recall the definition $T(i_1,\dots,i_{d-2},j)=S(i_1,\dots,i_{d-2},j)-S'(i_1,\dots,i_{d-2},j)$. There is a family of independent exponential random variables $(T_j')_{j=1}^m$ with rate $n$ which may be coupled with the process so that $T_j' \ge 2^{-(i_1+\dots+i_{d-2}+j)}T(i_1,\dots,i_{d-2},j)$ for each $j \in \{1,\dots,m\}$.
        By Bernstein's inequality (\cref{thm:bernstein}),
        there exists an absolute constant $c > 1$ such that $\Pr[T_1'+\dots+T_m' > cm/n] < n^{-3}$.
                        Moreover, conditionally on $S(i_1,\dots,i_{d-2},j)$ and $S'(i_1,\dots,i_{d-2},j)$ for $j\in \{1,\dots,m\}$,
    the random variables $\big(|\Pp \cap \Lambda_j| \big)_{j=1}^m$ are independent Poisson variables with rate  
                            $$
        (2^{d-1}-1)2^{-(i_1+\dots+i_{d-2}+j)} T(i_1,\dots,i_{d-2},j) n
        \le 2^{d-1} T_j' n.
    $$
            Hence, $|X(i_1,\dots,i_{d-2})|$ is stochastically dominated by a Poisson random variable with parameter $\mu:=\sum_{j=1}^m 2^{d-1} T_j' n$.
    By~\eqref{eq:standard}, conditionally on the event $\mu \le 2^{d-1}cm$ (which was shown above to happen with probability at least $1-n^{-3}$), the probability that $|X(i_1,\dots,i_{d-2})| > 2^d cm$ is at most $2^{-m/2}\le n^{-3}$.
    The lemma then follows by a union bound over all choices of $(i_1,\dots,i_{d-2}) \in \{1,\ldots,m\}^{d-2}$.
\end{proof}

\subsection{Number of triangles incident to each vertex.}\label{subsec:triangles}
Now, we prove \cref{lemma: few triangles}.
We first give a separate proof for $d=2$, where we actually do not need any randomness. Similar observations have been made by various other authors~\cite{H-PS03,PT03}. \begin{proof}[Proof of \cref{lemma: few triangles} for $d=2$]
    Fix any point set $P$ containing points with distinct first coordinates and distinct second coordinates.
    Fix any $p \in P$.
    It suffices to show the number of edges of $G$ in the neighbourhood $N(p)$ is $O(\deg(p))$.
    Then, with the help of \cref{lemma: max degree}, the conclusion is verified.
    
    First of all, fix a coordinate system centered at $p$ and look at other points from left to right.
    Then, the $y$-coordinates of the neighbours of $p$ in the first and in the third quadrant form two decreasing sequences, while the $y$-coordinates of its neighbours in the second and in the fourth quadrant form two increasing sequences; see \cref{figure: four quadrants}.
    Thus, every point in $N(p)$ has at most two common neighbours with $p$ in its quadrant, so there are at most $|N(p)|=\deg(p)$ such edges.
    
    Since no edge goes between opposite quadrants, we are left to bound from above the number of edges between any pair of neighbouring quadrants. 
    Due to symmetry, it suffices to consider the first and the fourth quadrant. 
    Every point in one of these quadrants has at most one neighbour with smaller $x$-coordinate in the other quadrant, so the number of edges between the two quadrants is $O(|N(p)|)=O(\deg(p))$.
    Applying the same argument to the other three pairs of neighbouring quadrants finishes the proof.
\end{proof}
\begin{figure}
    \centering
    \definecolor{cqcqcq}{rgb}{0.7529411764705882,0.7529411764705882,0.7529411764705882}
    \begin{tikzpicture}[scale=0.6,line cap=round,line join=round,x=1cm,y=1cm]
        \clip(-15,-4.6) rectangle (12.178965932646557,4.6);
        \draw [line width=0.8pt] (-3,0)-- (4,0);
        \draw [line width=0.8pt] (-3,0)-- (-3,5);
        \draw [line width=0.8pt] (-3,0)-- (-3,-5);
        \draw [line width=0.8pt] (-3,0)-- (-10,0);
        \draw [line width=0.4pt] (0,5)-- (0,-5);
        \draw [line width=0.4pt] (-10,2)-- (4,2);
        
        \draw [line width=0.4pt, dashed] (3,1)-- (3,0);
        \draw [line width=0.4pt, dashed] (2,-1)-- (2,0);
        \draw [line width=0.4pt, dashed] (1,-2)-- (1,0);
        
        \begin{scriptsize}
            \draw [fill=white] (-3,0) circle (2.5pt);
            \draw [fill=black] (-2,5) circle (2.5pt);
            \draw [fill=black] (-1.5,4) circle (2.5pt);
            \draw [fill=cqcqcq] (-1,3) circle (2.5pt);
            \draw [color=black] (0,2)-- ++(-2.5pt,-2.5pt) -- ++(5pt,5pt) ++(-5pt,0) -- ++(5pt,-5pt);
            \draw [fill=cqcqcq] (3,1) circle (2.5pt);
            \draw [fill=black] (-3.5,4.5) circle (2.5pt);
            \draw [fill=cqcqcq] (-4,3.5) circle (2.5pt);
            \draw [fill=cqcqcq] (-6,1.5) circle (2.5pt);
            \draw [fill=black] (-7.5,0.5) circle (2.5pt);
            \draw [fill=cqcqcq] (2,-1) circle (2.5pt);
            \draw [fill=cqcqcq] (1,-2) circle (2.5pt);
            \draw [fill=cqcqcq] (-0.5,-2.5) circle (2.5pt);
            \draw [fill=black] (-1.5,-4) circle (2.5pt);
            \draw [fill=black] (-5,-1.5) circle (2.5pt);
            \draw [fill=black] (-4,-3.5) circle (2.5pt);
            \draw [fill=black] (-8.5,-0.5) circle (2.5pt);
            
            \draw[color=black] (-3.3,0.3) node {\large{$p$}};
            \draw[color=black] (3-3.3,2.3) node {\large{$q$}};
                    \end{scriptsize}
    \end{tikzpicture}
    \caption{The solid points are neighbours of $p$ and the grey points are the common neighbours of $p$ and $q$.}
    \label{figure: four quadrants}
\end{figure}
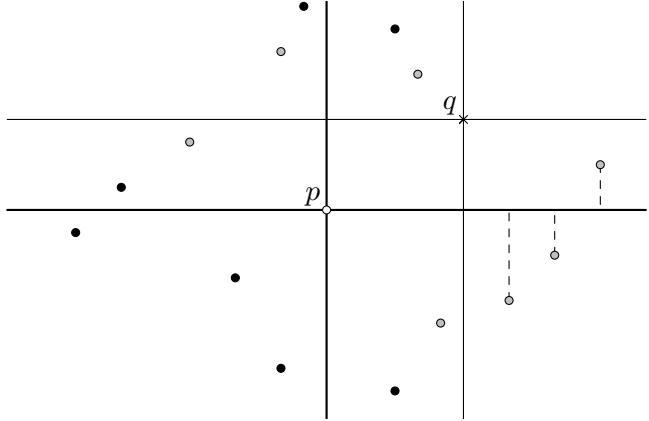

In the rest of this section, we prove \cref{lemma: few triangles} for $d \ge 3$.
Fix \begin{equation}\zeta := 2^{2d+1}d\frac{\log_2\log_2 n}{n}.\label{eq:zeta}\end{equation}
A pair of two points $x,y \in \mb{R}^d$ is said to be \emph{far} if $\vol(\mr R[x,y]) > \zeta$, and \emph{close} otherwise.
We categorize the edges of the box-Delaunay graph into {\em far edges} (if the two points form a far pair) and {\em close edges} (if the two points form a close pair).
Then, \cref{lemma: few triangles} is a consequence of \cref{lemma: max degree} and the following two statements.

\begin{lemma}\label{lemma: edges of far pairs}
    Fix $d \ge 2$.
    Let $P \subseteq [0,1]^d$ be a uniformly random set of $n$ points, and let $G$ be the box-Delaunay graph of $P$.
    Then, whp, every vertex in $G$ is incident to $O((\log n)^{1+o(1)})$ far edges.
\end{lemma}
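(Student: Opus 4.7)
My plan is to Poissonise, fix a vertex, and analyse the exploration process from \cref{lemma: degree in poisson} more carefully, exploiting the fact that the definition of $\zeta$ in \eqref{eq:zeta} makes large empty rectangles extremely rare: for any $x, y \in [0,1]^d$ with $\vol(\mr R[x,y]) > \zeta$, one has $\Pr[\mr R[x,y]\cap \Pp=\emptyset] \le e^{-n\zeta} = (\log n)^{-c_d}$ for some constant $c_d = c_d(d)$ that is much larger than $d-1$ (in fact $c_d = \Theta(2^{2d}d)$).

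First, I would Poissonise, using \eqref{eq:Pois}, so that it suffices to prove the statement with failure probability $o(n^{-1/2})$ in the Poissonised model. Fix a vertex $v \in \Pp$ and, by the $2^d$-fold symmetry of orthants, restrict attention to the positive orthant with $v = o$ (union bounding over orthants and vertices at the end). Applying the exploration process of \cref{lemma: degree in poisson}, every neighbour of $v$ in this orthant is either an anchor $x(i_1,\dots,i_{d-1})$ or a residual Poisson point in some stratum $\Lambda_j(i_1,\dots,i_{d-2})$. For an anchor to correspond to a far edge we need $\vol(\mr R[o,x(i_1,\dots,i_{d-1})]) > \zeta$, and the bound $x(\cdot)_k \le 2^{1-i_k}$ for $k < d$ combined with $x(\cdot)_d = S(i_1,\dots,i_{d-1})$ forces
\[
S(i_1,\dots,i_{d-1}) > \zeta \cdot 2^{\sum_k i_k - d + 1}.
\]

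The heart of the argument is to iteratively apply the case split $\{S(i) > \tau\} \subseteq \{T(i) > \tau/2\} \cup \{S'(i) > \tau/2\}$ together with the recursion $S(i) = S'(i) + T(i)$: this shows that each far anchor can be ``charged'' to some exceptional event of the form $\{T(j_1,\dots,j_{d-1}) > (n\zeta/2^d) \cdot 2^{\sum_k j_k}/n\}$ for some $j \preceq i$, with bounded multiplicity per charge. Since the $T(i_1,\dots,i_{d-1})$'s may be coupled to independent exponentials (as in the proof of \cref{lemma: degree in poisson}), the resulting exceptional indicators are independent Bernoullis with success probability $e^{-n\zeta/2^d} = (\log n)^{-C_d}$ for a large constant $C_d = C_d(d)$. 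A Chernoff bound (\cref{thm:chernoff}) on the sum over the $O((\log n)^{d-1})$ relevant cells then yields
\[
\Pr\sqb{\text{number of exceptional events at } v \ge \log n} \le \paren{\frac{e(\log n)^{d-1-C_d}}{\log n}}^{\log n} = n^{-\omega(1)},
\]
which is more than enough to survive a union bound over all $n$ vertices and $2^d$ orthants. The residual far neighbours inside each stratum $\Lambda_j$ are controlled analogously, using that $|\Pp\cap \Lambda_j|$ is Poisson with mean proportional to $T(i_1,\dots,i_{d-2},j)$, so an abundance of residual far neighbours again forces some underlying $T$ to be exceptional.

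The main obstacle is to rigorously carry out the ``charging'' step described above: one must iterate the decomposition $S(i) = S'(i) + T(i)$ down to the root $(1,\dots,1)$ of the dyadic grid, and verify that each far anchor is charged to a single exceptional $T$-event with suitably controlled multiplicity, so that the number of far anchors is dominated by the number of exceptional events (up to a factor of at most $(\log n)^{o(1)}$). This requires carefully tracking how the threshold $\tau$ halves at each step of the recursion while the cell index $\sum_k i_k$ decreases by at least $1$, verifying that only the ``tight'' recursion (where $\sum_k$ decreases by exactly $1$ per step) contributes to first order, and handling the ``non-tight'' contributions by noting that they require a strictly stronger exceptional event whose probability is correspondingly smaller.
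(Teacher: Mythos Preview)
Your approach differs substantially from the paper's, and as written it has real gaps.

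The paper does \emph{not} reuse the exploration process from \cref{lemma: degree in poisson}. Instead, it works with a direct $d$-dimensional dyadic decomposition: for each $I$, let $\mc R_I$ be the collection of boxes $R(i_1,\dots,i_d) = \prod_k [2^{-i_k}, 2^{1-i_k})$ with $i_1+\dots+i_d=I$. The key step (a short claim inside the proof of \cref{lemma: edges of far pairs in Poisson}) shows that with probability $1-O(n^{-3})$ at most $2^{I+3}\log_2 n/n$ boxes in $\mc R_I$ are empty. Any far neighbour $x$ with $\mr R[o,x]\cap \Pp=\emptyset$ forces $R(i_1+1,\dots,i_d+1)$ to be empty, so $x$ must lie in one of a small family of ``viable'' boxes whose total volume at each relevant weight $I$ is $O((\log n)/n)$; a Poisson tail bound then gives $O(\log n)$ far neighbours per weight, and summing over the $O(\log\log n)$ relevant weights finishes. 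This avoids the exploration process entirely and needs no charging argument.

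Your proposal has two genuine problems. First, the claim that the exceptional indicators $\{T(j_1,\dots,j_{d-1}) > \cdots\}$ can be coupled to \emph{independent} Bernoullis is not justified. The coupling used in the proof of \cref{lemma: degree in poisson} produces independent exponentials only along a single fibre (fixed $i_1,\dots,i_{d-2}$, varying $j$); across different $(d-1)$-tuples the sweep regions overlap and the $T$'s are correlated. Indeed, one can check that $S(i)$ equals the minimum $d$-th coordinate of $\Pp$ in the box $\prod_{k<d}[0,2^{-i_k}]\times[0,1]$, so the events $\{S(i)>\tau_i\}$ are emptiness events for nested boxes through the origin and are \emph{positively} correlated---a Chernoff bound on their sum is not available as stated. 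Second, the charging step, which you yourself flag as ``the main obstacle'', is not carried out, and the multiplicity control is genuinely unclear: a single exceptional $T(j)$ makes $S(j)$ large, and this propagates via $S'(i)\ge S(j)$ to every $i\succ j$, so one exceptional event can in principle push many anchors $x(i)$ over the ``far'' threshold. Bounding the number of far anchors by the number of exceptional $T$-events (even up to a $(\log n)^{o(1)}$ factor) requires a real argument beyond the halving recursion $\{S>\tau\}\subseteq\{T>\tau/2\}\cup\{S'>\tau/2\}$.

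Both issues may be fixable with additional work, but the paper's empty-box-counting argument sidesteps them entirely and is considerably simpler.
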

\begin{lemma}\label{lemma: few triangles from close pairs}
    Fix $d \ge 3$.
    Let $P \subseteq [0,1]^d$ be a uniformly random set of $n$ points, and let $G$ be the box-Delaunay graph of $P$. 
        Then, whp, every vertex in $G$ is incident to $O((\log n)^{2d-3+o(1)})$ triangles formed by three close edges.
\end{lemma}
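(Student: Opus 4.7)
The plan is to prove the stronger bound $(\log n)^{d+o(1)}$ per vertex, which suffices since $d \le 2d-3$ for $d \ge 3$. Fix a vertex $v \in P$. For each close triangle $\{v,x,y\}$, relabel the vertices so that $\vol(\mr R[v,y]) < \vol(\mr R[v,x])$ (well-defined almost surely); then
\[
\#\{\text{close triangles at }v\} = \sum_{x \in C(v)} |D_{v,x}|,
\]
where $C(v) := \{w \in P\setminus\{v\} : \{v,w\}\text{ is close}\}$ and $D_{v,x}$ is the set of $y \in P\setminus\{v,x\}$ such that $\{v,x,y\}$ is a close triple with $\vol(\mr R[v,y]) < \vol(\mr R[v,x])$. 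The plan is to show whp $|C(v)| \le (\log n)^{d-1+o(1)}$ for all $v \in P$, and whp $|D_{v,x}| \le (\log n)^{1+o(1)}$ for all close pairs $\{v,x\}$; together these give the desired bound.

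First, I would compute $\mathbb{E}[|C(v)|\mid v]$ via a direct change of variables into exponential coordinates, obtaining $\mathbb{E}[|C(v)|\mid v] = O(n\zeta(\log n)^{d-1}) = O((\log n)^{d-1}\log\log n)$. A Chernoff bound (\cref{thm:chernoff}) then gives $|C(v)| = (\log n)^{d-1+o(1)}$ with probability $1-n^{-\omega(1)}$, which survives a union bound over $v \in P$.

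Second, for a fixed close pair $\{v,x\}$ with $V := \vol(\mr R[v,x]) \le \zeta$, I would estimate the volume of the set $U \subseteq [0,1]^d$ of $y$ satisfying $\vol(\mr R[v,y]) < V$ and $\vol(\mr R[x,y]) \le \zeta$. Writing $L := -\log_2\zeta$ and using a dyadic shape decomposition (each $y$ assigned an integer shape vector $\sigma(y-v) \in \mathbb{Z}^d$), the elementary inequality $|x_i - y_i| \le 2\max(|x_i - v_i|, |y_i - v_i|)$ translates the close constraint on $\{x,y\}$ into $\sum_i \max(\sigma(x-v)_i, \sigma(y-v)_i) \le -L + O(1)$. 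For a given shape of $x-v$, the number of valid shapes of $y-v$ is controlled by a lattice-point count in a slice of an $\ell^1$-ball, and summing the per-shape volumes over the $O(1)$ sign configurations of $(x-v, y-v)$ yields $\mathbb{E}[|D_{v,x}| \mid v,x] = O(n\zeta) = O(\log\log n)$, uniformly over close pairs. The uniformity is a geometric cancellation: when $V \ll \zeta$, the number of valid shapes of $y-v$ grows polynomially in $\log_2(\zeta/V)$ while the per-shape box volume decays exponentially. A Chernoff tail bound then yields $|D_{v,x}| = O(\log n/\log\log n) = (\log n)^{1+o(1)}$ with probability $1-n^{-3}$, surviving a union bound over the $O(n^2)$ close pairs.

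Combining the two estimates gives $\#\{\text{close triangles at }v\} \le |C(v)| \cdot \max_{x \in C(v)} |D_{v,x}| \le (\log n)^{d+o(1)} \le (\log n)^{2d-3+o(1)}$ whp for all $v \in P$. The main obstacle is the case analysis needed to establish the uniform bound $\mathbb{E}[|D_{v,x}|] = O(\log\log n)$: the ``even closer'' constraint $\vol(\mr R[v,y]) < V$ and the close pair constraint $\vol(\mr R[x,y]) \le \zeta$ must be combined carefully across all orthant configurations of $(v, x, y)$ to produce the geometric cancellation above---neither constraint alone suffices.
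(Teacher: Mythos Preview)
Your decomposition $\#\{\text{close triangles at }v\}=\sum_{x}|D_{v,x}|$ and the two bounds you seek on $|C(v)|$ and $|D_{v,x}|$ are exactly the paper's strategy. The paper sums over close \emph{neighbours} of $v$ (using \cref{lemma: max degree}) rather than all close pairs, but this is the same order $(\log n)^{d-1+o(1)}$; and the paper's \cref{lemma: few triangles from close pairs in Poisson} is precisely your $|D_{v,x}|$ estimate. The only substantive difference is that the paper proves the weaker $\vol(U)=O(\zeta(\log n)^{d-2})$, hence $|D_{v,x}|=O((\log n)^{d-2+o(1)})$, which already yields the stated $(\log n)^{2d-3+o(1)}$.

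There is a real gap in your argument for the stronger $\vol(U)=O(\zeta)$. The inequality $|x_i-y_i|\le 2\max(|x_i-v_i|,|y_i-v_i|)$ is an \emph{upper} bound on $|x_i-y_i|$; it only shows that $\sum_i\max(\sigma(x-v)_i,\sigma(y-v)_i)\le -L+O(1)$ \emph{implies} $\vol(\mr R[x,y])\le\zeta$, not the converse you need to confine $y$ to a small set of shape classes. To get a shape constraint from $\vol(\mr R[x,y])\le\zeta$ you need a \emph{lower} bound on $|x_i-y_i|$, and none exists when $y_i\approx x_i$: a pair $\{x,y\}$ can be close because one coordinate of $x-y$ is tiny while $\sigma(x-v)$ and $\sigma(y-v)$ differ wildly in that coordinate. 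The paper confronts exactly this obstruction by splitting off the region $\Lambda'$ where $|y_k-x_k|>x_k/2$ for every $k$ (there one has $|x_k-y_k|\ge\max(x_k,|y_k|)/4$, a genuine lower bound, and the shape count gives $\vol(\Lambda')=O(\zeta)$), and treating the complementary slabs $\Lambda_k=\{|y_k-x_k|\le x_k/2\}$ separately, \emph{dropping} the $\{x,y\}$ constraint there and accepting the looser $\vol(\Lambda_k)=O(\zeta(\log n)^{d-2})$. Your ``geometric cancellation'' may well be real, but proving it requires a sharper analysis of the $\Lambda_k$ slabs that uses both constraints simultaneously; the dyadic argument as you have written it does not supply this. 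If you instead fall back to the paper's bound on $|D_{v,x}|$, your decomposition recovers exactly the lemma as stated.
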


To deduce \cref{lemma: few triangles} for $d\ge 3$, note that \cref{lemma: max degree,lemma: edges of far pairs} imply that whp every vertex is incident to $O((\log n)^{d+o(1)})$ triangles with at least one far edge, and \cref{lemma: few triangles from close pairs} ensures that whp every vertex is incident to $O((\log n)^{2d-3+o(1)})$ triangles with three close edges.

We now prove \cref{lemma: edges of far pairs,lemma: few triangles from close pairs}.
The following lemma is a Poissonised version of \cref{lemma: edges of far pairs}. (One can deduce \cref{lemma: edges of far pairs} from this statement in the same way in which we deduced \cref{lemma: max degree} from \cref{lemma: degree in poisson}; we omit the details.)
\begin{lemma}\label{lemma: edges of far pairs in Poisson}
    Fix $d \ge 2$.
    Let $\Pp$ be a Poisson point process with intensity $n$ on the space $[0,1]^d$.
    Then, with probability $1-O(n^{-2})$, the number of points $x \in \Pp$ such that $(o,x)$ is a far pair and $\mr R[o,x]\cap \Pp=\emptyset$ is $O(\log n\log\log n)$.
\end{lemma}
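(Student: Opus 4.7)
The plan is to adapt the exploration process from the proof of \cref{lemma: degree in poisson}, reusing the recursively defined exploration points $x(I)$ for $(d-1)$-tuples $I=(i_1,\ldots,i_{d-1}) \in \{1,\ldots,m\}^{d-1}$, the increments $T(I)$, the coordinates $S(I), S'(I)$, and the random regions $\Lambda_j(i_1,\ldots,i_{d-2})$. By \cref{claim: partition of the cube,claim: finite steps of max degree}, up to an event of probability $n^{-\omega(1)}$, every far contributor is either an exploration point $x(I)$ or lies in one of the regions $\Lambda_j$, so it suffices to bound each contribution.

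First I would bound the number of \emph{far} exploration points. If $x(I)$ is far then, since $x_k(I)\le 2^{-i_k}$ for $k\le d-1$, we must have $S(I)>\zeta\cdot 2^{w(I)}$ where $w(I):=i_1+\cdots+i_{d-1}$. Conditional on the past, $T(I)$ is exponential of rate $n/2^{w(I)}$, so on the (overwhelmingly likely) event---established as in \cref{lemma: degree in poisson} using Bernstein's inequality---that $nS'(I)/2^{w(I)}\le n\zeta/2$ for every $I$, this occurs with conditional probability at most $e^{-n\zeta/2}=(\log n)^{-c_1}$ for a large constant $c_1=c_1(d)$. Stochastically dominating by $\mathrm{Bin}(m^{d-1},(\log n)^{-c_1})$ and applying Chernoff gives that the number of far exploration points exceeds $C\log n\log\log n$ with probability at most $n^{-\omega(1)}$, for a sufficiently large constant $C$.

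Next I would bound the far contributors in the random regions $\Lambda_j(i_1,\ldots,i_{d-2})$. For such a point $x$ to be far, the coordinate bound $\prod_{k<d}x_k\le 2^{d-1-I'}$ (with $I':=i_1+\cdots+i_{d-2}+j$) forces $x_d>s_0(I'):=\zeta\cdot 2^{I'-d+1}$, and so in particular $S(I')>s_0(I')$. By the exponential tail of $T(I')$, this occurs for each $I'$ with conditional probability at most $e^{-n\zeta/2^{d-1}}=(\log n)^{-c_2}$ for a large constant $c_2=c_2(d)$, so an analogous Chernoff bound yields at most $\log n$ such ``bad'' tuples $I'$ with probability $1-O(n^{-2})$. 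Conditional on each bad $I'$ and on the exploration, the count of points of $\Pp$ in the upper portion $\Lambda_j\cap\{x_d>s_0(I')\}$ is Poisson with mean $O(n\zeta/2^{d-1})=O(\log\log n)$ (using the memorylessness of $T(I')$ conditioned on $T(I')>s_0(I')-S'(I')$); summing over the $O(\log n)$ bad tuples and invoking standard Poisson tail bounds gives $O(\log n\log\log n)$ such points in total, with probability $1-O(n^{-2})$.

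The main obstacle is the Chernoff-type argument in the first step: the events ``$x(I)$ is far'' for different $I$ are not mutually independent, so some care is needed to couple them---via the exploration filtration, under which each conditional probability is uniformly bounded by $e^{-n\zeta/2}$---to independent Bernoulli random variables before applying the usual concentration.
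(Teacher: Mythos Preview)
Your approach of reusing the exploration process from \cref{lemma: degree in poisson} is natural, but there is a genuine gap in the first step. You assert that the event ``$S'(I)\le \zeta\,2^{w(I)}/2$ for every $I$'' holds with probability $1-O(n^{-2})$, citing Bernstein's inequality as in \cref{lemma: degree in poisson}. This is not correct: in that lemma, Bernstein controls a \emph{sum} of normalised increments; here you need a uniform-in-$I$ bound, which is a max-type statement. Already for the smallest nontrivial $I$ (say $I=(2,1,\dots,1)$), the condition $S'(I)\le \zeta\,2^{w(I)-1}$ asks a single exponential random variable to stay within $\Theta(\log\log n)$ times its mean, and this fails with probability $e^{-\Theta(n\zeta)}=(\log n)^{-\Theta(1)}$, far larger than $n^{-2}$. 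On the complementary event the conditional probability $\Pr[x(I)\text{ far}\mid\text{past}]$ can equal $1$ (whenever $S'(I)\ge \zeta\,2^{w(I)}$), so the filtration coupling with independent Bernoullis that you propose cannot be carried out. Exactly the same issue recurs in step two when you bound $\Pr[S(I')>s_0(I')\mid\text{past}]$ via the tail of $T(I')$, and again when you invoke memorylessness; both require $S'(I')<s_0(I')$, which you have no $O(n^{-2})$-probability control over.

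For comparison, the paper avoids these correlation issues entirely by abandoning the exploration process and using a \emph{static} dyadic decomposition in all $d$ coordinates: it groups the boxes $R(i_1,\dots,i_d)=\prod_k[2^{-i_k},2^{-i_k+1})$ by weight $I=\sum_k i_k$, shows (via genuine Poisson independence of disjoint boxes) that for each relevant $I$ the number of empty boxes at that weight is $O(2^I(\log n)/n)$ with probability $1-O(n^{-3})$, and then bounds the far contributors weight by weight. No filtration or coupling is required.
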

\begin{proof}
    We may and will assume $n$ is sufficiently large, no point of $\Pp$ has any of its coordinates equal to 0 or 1, and no two points of $\Pp$ have the same value of any coordinate, whenever needed.

    First, we divide the points $x\in (0,1)^d$ into regions according to the likelihood that $\mr R[o,x]\cap \Pp=\emptyset$. For every $i_1,\dots,i_d \ge 1$, let $R(i_1,\dots,i_d)$ be the box $\prod_{k}[2^{-i_k}, 2^{-i_k+1})$.
    For every $I \ge d$, let $\mc{R}_I$ be the set of all boxes $R(i_1,\dots,i_d)$ such that $i_1+\dots+i_d = I$.

    Now, for any $x\in (0,1)^d$, note that $x$ lies in a unique box $R(i_1,\dots,i_d)$, namely $i_k=\ceil{\log_2 x_k^{-1}}$. 
    For convenience, we call $i_1+\dots+i_d=\sum_{k=1}^d \ceil{\log_2 x_k^{-1}}$ the {\em weight} of $x$. 
    Observe that if $(o,x)$ is a far pair, then $\prod_k 2^{-i_k+1} > \zeta$, i.e., the weight of $x$ is less than $\log_2\zeta^{-1} + d$.
    
    We say that $R(i_1,\dots,i_d)$ is \emph{empty} if $R(i_1,\dots,i_d) \cap \Pp=\emptyset$.
    Note that if $\mr R[o,x]\cap \Pp=\emptyset$, then, in particular, $R(i_1+1,\dots,i_d+1)$ is empty. So, to prove \cref{lemma: edges of far pairs in Poisson}, it will be helpful to show that there are typically not too many empty boxes.
    \begin{claim}\label{cl:3.9}
        For every $I < \log_2\zeta^{-1} + 2d$, with probability $1-O(n^{-3})$, the number of empty boxes in $\mc{R}_I$ is at most $2^{I+3}\log_2 n / n$.
    \end{claim}
        \begin{claimproof}[Proof of \cref{cl:3.9}]
        Write $s:=\ceil{2^{I+3}\log_2 n/n}$.
        Observe that there are $\binom{I-1}{d-1}\le I^d$ boxes $R(i_1,\dots,i_{d})$ in $\mc{R}_I$ and they are all disjoint.
        For any $s$ of these boxes, the total volume is $s/2^{I}$ and, thus,
        the probability that all of them are empty is $\exp(-ns2^{-I})$.
        Hence, the probability that at least $s$ of the boxes in $\mc R_I$ are empty is at most $p:=I^{ds}\cdot \exp(-ns2^{-I})$.
        Note that the factor ``$I^{ds}$'' does not significantly affect this expression: indeed, using that $I < \log_2\zeta^{-1}+2d \le \log_2 n$ (recalling the definition of $\zeta$ in \cref{eq:zeta}), we have 
        \[
            I^{ds} \le (\log_2 n)^{ds}
            = \exp(ds\log\log_2 n)
            \le \exp(ds\log_2\log_2 n)=\exp(ns\cdot \zeta/2^{2d+1})\le \exp(ns/2^{I+1}).
        \]
        This means $p \le \exp(-ns/2^{I+1})$.
        Since $s\ge 2^{I+3}\log_2 n/n$, we deduce that $p\le  \exp(-ns/2^{I+1}) \le \exp(-4\log_2 n) < n^{-3}$.
    \end{claimproof}

    Next, we observe that, with probability $1-O(n^{-2})$, there is no $x \in \Pp$ of weight at most $\log_2 n-2\log_2\log_2 n$ such that $\mr R[o,x]\cap \Pp=\emptyset$.
    To see this, note that such an $x$ would imply the existence of an empty box in $\mc{R}_I$ for some $I \le \log_2 n-2\log_2\log_2 n + d$.
    One can check that this also implies $I < \log_2\zeta^{-1}+2d$.
    However, in this range, $2^{I+3}\log_2 n / n = o(1)$ and, by \cref{cl:3.9} and a union bound, this happens with probability $O((\log_2 n)n^{-3}) = O(n^{-2})$.

    Finally, fix any integer $I \in (\log_2 n-2 \log_2\log_2 n,\, \log_2\zeta^{-1} + d]$.
    We show that, with probability $1-O(n^{-3})$, there are $O(\log_2 n)$ points $x \in \Pp$ of weight $I$ such that $\mr R[o,x]\cap \Pp=\emptyset$.
    Then, the lemma statement will follow from the discussion above and a union bound over all $I$ in the given range.
    
    Say that a box $R(i_1,\dots,i_d)$ is \emph{$I$-viable} if $R(i_1+1,\dots,i_d+1)$ is empty and $i_1+\dots+i_d = I$. Recall that if $\mr R[o,x]\cap \Pp=\emptyset$ for some $x$ of weight $I$, then $x$ lies in some viable box $R(i_1,\dots,i_d)$.
    
    We first reveal $R\cap \Pp$ for each $R\in \mc{R}_{I+d}$. By \cref{cl:3.9}, with probability $1-O(n^{-3})$, there are most $2^{I+d+3}\log_2 n/n$ empty boxes in $\mc{R}_{I+d}$.
    If this is the case, then the total volume of all viable boxes $R(i_1,\dots,i_d)$ is at most $(2^{I+d+3}\log_2 n/n)\cdot 2^{-I}=2^{d+3}\log_2 / n$. 
    By revealing which boxes are $I$-viable, we have revealed no information about the points of $\Pp$ inside those boxes. 
    Thus, conditionally on the set of $I$-viable boxes, the number of weight-$I$ points $x\in \Pp$ with $\mr R[o,x]\cap \Pp=\emptyset$, which is at most the number of points of $\Pp$ in the viable boxes, is stochastically dominated by a Poisson distribution with parameter $2^{d+3}\log_2 n$.
    By~\eqref{eq:standard}, the probability that there are more than $2^{d+4}\log_2 n$ such points is bounded from above by $2^{-2^{d+3}\log_2 n/2} \le n^{-3}$, as desired.
\end{proof}

Next, the following lemma can be viewed as a Poissonised version of \cref{lemma: few triangles from close pairs}.

\begin{lemma}\label{lemma: few triangles from close pairs in Poisson}
    Fix $d \ge 3$ and let $x \in (-1,1)^d$ be such that $\vol(\mr R[o,x]) \le \zeta$.
    Let $\Pp$ be a Poisson point process with intensity $n$ on the space $[-1,1]^d$.
    Then, with probability $1-O(n^{-3})$, the number of points $y \in \Pp$ such that $\vol(\mr R[o,y]) \le \vol(\mr R[o,x])$ and $\vol(\mr R[x,y]) \le \zeta$ is $O((\log n)^{d-2+o(1)})$.
\end{lemma}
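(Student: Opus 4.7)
My plan is to reduce the statement to a deterministic bound on the volume of the set
\[ A := \{y \in [-1,1]^d : \vol(\mr R[o,y]) \le v,\; \vol(\mr R[x,y]) \le \zeta\}, \]
where $v := \vol(\mr R[o,x])$. Since $\Pp$ is a homogeneous Poisson process of intensity $n$, the count in the statement equals $|\Pp \cap A|$, a Poisson random variable with mean $\mu := n \vol(A)$. I aim to prove the deterministic bound $\vol(A) = O(\zeta (\log n)^{d-2})$, which gives $\mu = O(n\zeta (\log n)^{d-2}) = O((\log n)^{d-2} \log\log n) = (\log n)^{d-2+o(1)}$. Setting $M := \max(e^2 \mu, 10 \log n) = O((\log n)^{d-2+o(1)})$, a standard Poisson Chernoff bound (along the lines of \cref{thm:chernoff}) then yields $\Pr[|\Pp \cap A| > M] \le n^{-3}$.

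The heart of the argument is the volume bound. For each dimension $k$, the triangle inequality $|y_k| + |y_k - x_k| \ge |x_k|$ prevents both $|y_k| \le |x_k|/2$ and $|y_k - x_k| \le |x_k|/2$ from holding simultaneously, so I would partition $\{1, \ldots, d\}$ into three subsets according to the position of $y_k$:
\begin{itemize}
\item $K_0 := \{k : |y_k| \le |x_k|/2\}$, where $y_k$ is close to $0$ and thus $|y_k - x_k| = \Theta(|x_k|)$;
\item $K_x := \{k : |y_k - x_k| \le |x_k|/2\}$, where $y_k$ is close to $x_k$ and thus $|y_k| = \Theta(|x_k|)$;
\item $D := \{1, \ldots, d\} \setminus (K_0 \cup K_x)$, the ``diagonal'' coordinates where $y_k$ is far from both $0$ and $x_k$, and thus $|y_k| = \Theta(|y_k - x_k|)$.
\end{itemize}
Since $d$ is a constant, the number of such partitions is $O(1)$, so it suffices to bound $\vol(A)$ restricted to each partition.

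For a fixed partition, I would parametrize by $u_k := |y_k| \in [|x_k|/2, 1]$ for $k \in D$ and substitute the three approximations above into the two product constraints. Writing $\eta := \zeta/v \ge 1$, these reduce to
\[ \prod_{k \in K_0} |y_k| \le V_0(u) := O\left(\frac{\prod_{k \in K_0 \cup D} |x_k|}{\prod_{k \in D} u_k}\right), \qquad \prod_{k \in K_x} |y_k - x_k| \le V_x(u) := O\left(\frac{\eta \prod_{k \in K_x \cup D} |x_k|}{\prod_{k \in D} u_k}\right). \]
The standard estimate that the subset of $\prod_k [-R_k, R_k]$ on which $\prod_k|z_k| \le W$ has volume $O(W (\log(\prod_k R_k / W))^{m-1})$ then bounds the $K_0$- and $K_x$-contributions by $O(V_0 (\log V_0^{-1})^{|K_0|-1})$ and $O(V_x (\log V_x^{-1})^{|K_x|-1})$, respectively. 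The key algebraic identity $V_0(u) V_x(u) = O(\zeta \prod_{k \in D} |x_k| / \prod_{k \in D} u_k^2)$ (which follows because $K_0, K_x, D$ partition $\{1,\dots,d\}$ and $\eta v = \zeta$), combined with the coordinate-wise integral $\int_{|x_k|/2}^{1} du_k/u_k^2 = O(1/|x_k|)$, gives $\int \prod_{k \in D} du_k \cdot V_0 V_x = O(\zeta)$. The logarithmic factors contribute at most $(\log n)^{|K_0| + |K_x| - 2} \le (\log n)^{d-2}$, yielding the desired bound $\vol(A) = O(\zeta (\log n)^{d-2})$.

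The main obstacle I anticipate is making rigorous the approximations $|y_k - x_k| = \Theta(|x_k|)$ and $|y_k| = \Theta(|x_k|)$ in the ``boundary'' regime where $|y_k|$ or $|y_k - x_k|$ is within a constant factor of $|x_k|/2$, and the separate treatment of partitions in which $V_0$ or $V_x$ already exceeds the product of the allowed coordinate ranges (so the corresponding constraint is automatic and does not generate a logarithmic factor). Both issues should be handled cleanly by upgrading the ternary partition to a finer dyadic decomposition of each dimension according to the scales of $|y_k|$ and $|y_k - x_k|$, and summing the resulting geometric series; this introduces extra bookkeeping but no genuinely new ideas.
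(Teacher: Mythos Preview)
Your reduction to a deterministic volume bound and your overall decomposition are correct, and the strategy matches the paper's: both proofs show $\vol(A)=O(\zeta(\log n)^{d-2})$ and then apply a Poisson tail bound. The organisation of the decomposition differs, however. The paper covers $A$ by $d+1$ pieces: for each $k$ the slab $\Lambda_k=\{y:|y_k-x_k|\le x_k/2,\ \vol(\mr R[o,y])\le\zeta\}$ (bounded directly via \cref{lemma: computations for volumes}, using only one of the two constraints), together with the ``all far from $x$'' region $\Lambda'=\{y:|y_k-x_k|>x_k/2\ \forall k\}\cap A$, which is bounded by $O(\zeta)$ via a dyadic argument on both constraints. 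Your proposal instead uses the full ternary partition $(K_0,K_x,D)$ per coordinate and integrates both constraints simultaneously; this is more symmetric and in fact sharper (carried through carefully it yields $\vol(A)=O(\zeta)$, not just $O(\zeta(\log n)^{d-2})$), at the cost of heavier bookkeeping. The paper's route is shorter because it deliberately throws away the second constraint on $\Lambda_k$ and accepts the extra $(\log n)^{d-2}$.

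One caveat on your write-up: the line ``the logarithmic factors contribute at most $(\log n)^{|K_0|+|K_x|-2}$'' is not justified as stated. The logarithm in the $K_0$-bound is $\log\bigl(\prod_{K_0}|x_k|/V_0\bigr)\asymp\log\bigl(\prod_{D}u_k/\prod_{D}|x_k|\bigr)$, which can be as large as $\log(1/v)$ and hence far exceed $\log n$ when $v=\vol(\mr R[o,x])$ is tiny (the lemma imposes no lower bound on $v$). This does not break your argument---keeping the logarithms inside the integral and using $\int_{1/2}^{\infty}(\log s)^m s^{-2}\,ds=O_m(1)$ recovers the bound, as does your proposed dyadic fallback (which is precisely what the paper does for $\Lambda'$)---but you should not pull the log factors out of the integral as constants.
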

Before proving \cref{lemma: few triangles from close pairs in Poisson}, we show how to deduce \cref{lemma: few triangles from close pairs} from it.
\begin{proof}[Proof of \cref{lemma: few triangles from close pairs}]
Recall the deduction of \cref{lemma: max degree} from \cref{lemma: degree in poisson}. In much the same way (though, union bounding over pairs of points, instead of single points), by using \cref{lemma: few triangles from close pairs in Poisson}, we see that, whp, for any two points $p,x \in P$ forming a close pair, there are only $O((\log n)^{d-2+o(1)})$ points $y \in P$ such that $pxy$ forms a triangle, $\vol(\mr R[p,y]) \le \vol(\mr R[p,x])$ and $\vol(\mr R[x,y]) \le \zeta$ is $O((\log n)^{d-2+o(1)})$.
    Also, by \cref{lemma: max degree}, whp the maximum degree of $G$ is $O((\log n)^{d-1})$.
    Now, fix any point $p \in P$. 
    Any triangle $pxy$ formed by three close edges is counted either for the pair $(p,x)$ or for the pair $(p,y)$.
    In total, there are $O((\log n)^{d-1}) \cdot O((\log n)^{d-2+o(1)}) = O((\log n)^{2d-3+o(1)})$ such triangles, as desired.
            \end{proof}

Now, we are left to prove \cref{lemma: few triangles from close pairs in Poisson}. The remaining ingredient is the following simple lemma on the volume of the region in $(-1,1)^d$ containing the points $x$ satisfying $\vol(\mr{R}[o,x])\le t$.

\begin{lemma} \label{lemma: computations for volumes}
    Fix $d \ge 1$.
    Let $\Lambda = \{x \in (-1,1)^d: \prod_k |x_k| \le t\}$ where $t \in [0,1]$.
    Then, $\vol(\Lambda) = O(t (\log \frac{1}{t})^{d-1})$.
\end{lemma}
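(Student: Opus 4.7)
The claim is a standard volume computation; I will give it as a clean induction on $d$, after reducing to the positive orthant by symmetry.

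First, since $\Lambda$ is symmetric under sign flips of any coordinate, $\vol(\Lambda) = 2^d V_d(t)$, where $V_d(t) := \vol(\{x \in (0,1)^d : x_1 x_2 \cdots x_d \le t\})$. So it suffices to prove that $V_d(t) = O_d(t (\log(1/t))^{d-1})$. (We may assume $t\in (0,1)$, as both sides are continuous and the cases $t=0,1$ are trivial.)

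The plan is to induct on $d$. The base case $d=1$ is immediate: $V_1(t) = t$, and $(\log(1/t))^{0} = 1$. For the inductive step, by Fubini,
\[
V_d(t) = \int_0^1 V_{d-1}\!\left(\tfrac{t}{x_d}\right) dx_d.
\]
Split this integral at $x_d = t$. On the interval $(0,t]$, we have $t/x_d \ge 1$, so $V_{d-1}(t/x_d) = 1$; this contributes exactly $t$. On the interval $(t,1]$, the inductive hypothesis gives $V_{d-1}(t/x_d) \le C_{d-1} \cdot \tfrac{t}{x_d} \cdot (\log(x_d/t))^{d-2}$, so the contribution is bounded by
\[
C_{d-1}\, t \int_t^1 \frac{(\log(x_d/t))^{d-2}}{x_d}\, dx_d = C_{d-1}\, t \int_0^{\log(1/t)} u^{d-2}\, du = \frac{C_{d-1}}{d-1}\, t\, (\log(1/t))^{d-1},
\]
where we substituted $u = \log(x_d/t)$. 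Adding the two contributions yields $V_d(t) = O_d(t (\log(1/t))^{d-1})$, completing the induction.

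There is no substantive obstacle: the only care needed is in splitting the integral at $x_d = t$ so that the inductive hypothesis is applied only where $t/x_d < 1$ (so that $\log(x_d/t)$ is positive), and in noting that $(\log(1/t))^{d-1} \ge 1$ for $t$ small so that the dominant term absorbs the lower-order contribution $t$ from the first interval.
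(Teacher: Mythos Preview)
Your induction-on-$d$ approach via Fubini is clean and genuinely different from the paper's proof, which instead uses a dyadic decomposition: it covers $\Lambda$ by the boxes $R(i_1,\dots,i_d) = \{x : 2^{-i_k} \le |x_k| < 2^{-i_k+1}\ \forall k\}$ with $i_1+\cdots+i_d \ge \log_2(1/t)$, and then sums $\sum_{I \ge \log_2(1/t)} \binom{I-1}{d-1} 2^{d-I}$. Your approach is arguably more elementary and even yields the exact formula $V_d(t) = t\sum_{j=0}^{d-1}(\log(1/t))^j/j!$ if one tracks the recursion carefully.

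There is one small gap, though. The inductive hypothesis $V_{d-1}(s) \le C_{d-1}\, s(\log(1/s))^{d-2}$ cannot hold uniformly on $(0,1)$: for $d-1 \ge 2$ the right-hand side tends to $0$ as $s \to 1^-$, while $V_{d-1}(s) \to 1$. In your integral over $(t,1]$, the argument $s = t/x_d$ ranges all the way up to $1$ as $x_d \downarrow t$, so the bound you invoke is not available near that endpoint. The fix is easy: either strengthen the inductive statement to $V_{d-1}(s) \le C_{d-1}\, s(1+\log(1/s))^{d-2}$, which \emph{does} hold uniformly on $(0,1)$ and propagates through exactly the same computation, or split the integral at $x_d = et$ instead of $x_d = t$ so that $t/x_d \le 1/e$ on the second piece and a uniform inductive bound applies. (This is the same reason the lemma itself is only meaningful for $t$ bounded away from $1$; the paper's own proof also implicitly restricts to $t \le 2^{-10d}$.)
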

\begin{proof}
For $i_1,\dots,i_d \ge 1$, define $R(i_1,\dots,i_d):=\{x \in \mb{R}^d: \forall k\in\{1,\dots,d\}, 2^{-i_k}\le |x_k|<2^{-i_k+1}\}$,~a~union of $2^d$ boxes. If $R(i_1,\dots,i_d) \cap \Lambda \neq \emptyset$, then $i_1+\dots+i_d \ge \log_2 t^{-1}$.
    This means that $\Lambda$ lies in the union of $R(i_1,\dots,i_d)$ such that $i_1+\dots+i_d \ge \log_2 t^{-1}$.
    So
    \begin{equation} \nonumber
        \vol(\Lambda)
        \le \sum_{I \ge \log_2 t^{-1}} \sum_{i_1+\dots+i_d=I} \vol(R(i_1,\dots,i_d))
        = \sum_{I \ge \log_2 t^{-1}} \binom{I-1}{d-1} 2^{d-I}\le \sum_{I \ge \log_2 t^{-1}} I^{d-1} 2^{d-I}.
    \end{equation}
    If $t\le 2^{-10d}$, say, then this series decays exponentially fast. Hence, 
    \[\vol(\Lambda) = O((\log_2 t^{-1})^{d-1} 2^{d-\log_2 t^{-1}})=O(t (\log \tfrac{1}{t})^{d-1}),\]
    as desired.
\end{proof}

Now, we prove \cref{lemma: few triangles from close pairs in Poisson}.
The proof comes down to estimating the area of the region where $y$ could possibly lie. 
To do this, we cover said region with $d+1$ auxiliary regions, which are easier to analyse.
For $k\in \{1,\ldots,d\}$, the $k$-th region consists of points $y$ with $|y_k-x_k|\le x_k/2$ while $\vol(\mathrm{R}[o,y])\le \zeta$. 
The last two conditions combined imply that $\prod_{i\neq k} |y_i|\le 2\zeta/x_k$ and, for every $y_k\in [x_k/2, 3x_k/2]$, the $(d-1)$-dimensional volume of the latter region is controlled by \cref{lemma: computations for volumes}.
The last region consists of points $y$ with $|y_k-x_k| > x_k/2$ for all $k\in \{1,\ldots,d\}$ while $\vol(\mathrm{R}[x,y])\le \zeta$ and $\vol(\mathrm{R}[o,y])\le \vol(\mathrm{R}[o,x])\le \zeta$.
Estimating its area is the more technical part of the proof, and is based on computations using a discrete approximation of the coordinates of $x$. 
However, there is a high-level geometric intuition why the above region should have small area: 
it is obtained from a ``star-shaped'' neighbourhood, which mostly stays close to the axis hyperplanes, by cutting away slabs centred at these hyperplanes.

\begin{proof}[Proof of \cref{lemma: few triangles from close pairs in Poisson}]
    We may and will assume that no point in $\Pp$ has a coordinate of $-1$, $0$ or $1$.
    Due to symmetry, we also assume that $x \in (0,1)^d$.
    Define $\Lambda$ to be the potential region for $y$, that is,
    $$
        \Lambda:=\left\{ y \in (-1,1)^d: \vol(\mr R[o,y]) \le \vol(\mr R[o,x]), \text{ } \vol(\mr R[x,y]) \le \zeta \right\}.
    $$
    It suffices to show that $\vol(\Lambda) = O(\zeta\cdot (\log n)^{d-2})=O((\log n)^{d-2+o(1)}/n)$: then, the lemma will follow by\footnote{This is the only place we use the assumption $d\ge 3$.} \eqref{eq:standard}. 
    To this end, consider
    $$
        \Lambda_k := \left\{ y \in (-1,1)^d: |y_k-x_k| \le x_k/2, \text{ } \vol(\mr R[o,y])\le \zeta  \right\}\quad \text{ for each } k \in \{1,\dots,d\},
    $$
    and $$
        \Lambda' := \left\{ y \in (-1,1)^d: \vol(\mr R[o,y]) \le \vol(\mr R[o,x]), \text{ } \vol(\mr R[x,y]) \le \zeta,\text{ } \forall k \in \{1,\dots,d\},\text{ } |y_k-x_k| > x_k/2 \right\}.
    $$
    It is easy to see that $\Lambda \subseteq (\bigcup_k\Lambda_k)\cup \Lambda'$ (recall that we are assuming $\vol(\mr R[o,x])\le \zeta$, so $\Lambda$ only contains $y$ with $\vol(\mr R[o,y])\le \zeta$).

    We first show that $\vol(\Lambda_k) = O(\zeta \cdot (\log n)^{d-2})$.
    Note that the set $\{y_k: |y_k-x_k|\le x_k/2\}$ is an interval of length $x_k$.
    Fix any $y_k$ in this interval and observe that $|y_k| \ge x_k/2$.
    Then, $\zeta \ge \prod_\ell |y_\ell| \ge \prod_{\ell\neq k} |y_\ell| \cdot x_k/2$, that is, $\prod_{\ell\neq k} |y_\ell| \le 2\zeta / x_k$.
    By \cref{lemma: computations for volumes}, the $(d-1)$-dimensional volume of the set of points $(y_\ell)_{\ell\ne k}\in (-1,1)^{d-1}$ satisfying this latter condition is
    \[\begin{cases}O\bigg(\frac{\zeta}{x_k} \bigg(\log \frac{x_k}{\zeta}\bigg)^{d-2}\bigg)&\text{ if }2\zeta/x_k\le 1,\\
    2^{d-1}&\text{ if }2\zeta/x_k> 1.\end{cases}\]
    If $2\zeta/x_k\le 1$, then 
    \[\vol(\Lambda_k) = x_k \cdot O\bigg(\frac{\zeta}{x_k} \bigg(\log \frac{x_k}{\zeta}\bigg)^{d-2}\bigg) = O(\zeta \cdot (\log n)^{d-2}),\]
    where we used that $x_k/\zeta\le 1/\zeta = n^{1-o(1)}$.
        On the other hand, if $2\zeta/x_k> 1$, then $x_k \le 2\zeta$, so $\vol(\Lambda_k)=2^{d-1} x_k=O(\zeta)$.

    In the rest of the proof we show that $\vol(\Lambda') = O(\zeta)$.
    For $k \in \{1,\dots,d\}$, write $i_k := \ceil{\log_2 x_k^{-1}}$, so $2^{-i_k} \le x_k < 2^{-i_k+1}$.
    Let $I:= i_1+\dots+i_d$, so $\vol(\mr R[o,x]) \le \zeta$ implies that $I \ge \log_2 \zeta^{-1}$. For $j_1,\dots,j_d\ge 1$, let $R(j_1,\dots,j_d):=\{x \in \mb{R}^d: 2^{-j_k}\le |x_k|<2^{-j_k+1}\,\, \forall k=1,\dots,d\}$.
    Similarly to the proof of \cref{lemma: computations for volumes}, we are interested in the $R(j_1,\dots,j_d)$ that intersect $\Lambda'$.
    
    Suppose $y \in R(j_1,\dots,j_d) \cap \Lambda'$.
    Using $|y_k-x_k| > x_k/2$, we know that 
    \begin{equation*}
        |y_k - x_k| = 
        \begin{cases}
            x_k - y_k\ge |y_k|, &\text{if } y_k\le x_k/2,\\
            y_k - x_k\ge y_k/3, &\text{if } y_k\ge 3x_k/2.
        \end{cases}
    \end{equation*}
        Hence, $|x_k-y_k|> \max(x_k,|y_k|)/4\ge 2^{-\min(i_k,j_k)-2}$ and
    \[
    \zeta \ge \vol(\mr R[x,y])=\prod_{k=1}^d |x_k-y_k|> 2^{-(\min(i_1,j_1)+\dots +\min(i_d,j_d))-2d}
    \]
    which implies that
    \begin{equation}\label{eq:RHS1}
     \min(i_1,j_1)+\dots +\min(i_d,j_d) > \log_2 \zeta^{-1} - 2d. 
     \end{equation}
            In addition, we know that 
    \begin{equation}\nonumber 
    2^{-(j_1+\dots+j_d)} \le \vol(\mr R[o,y]) \le \vol(\mr R[o,x]) < 2^{-(i_1+\dots+i_d) + d},
    \end{equation}
    which implies that
    \begin{equation}\label{eq:RHS2'}
    j_1+\dots+j_d \ge i_1+\dots+i_d - d=I-d.    
    \end{equation}
    Let $\mc{J}$ be the set of tuples of positive integers $(j_1,\dots,j_d)$ satisfying \Cref{eq:RHS1,eq:RHS2'}.
    The discussion above indicates that $\Lambda' \subseteq \bigcup_{(j_1,\dots,j_d)\in \mc{J}} R(j_1,\dots,j_d)$.

    For every integer $J \ge I-d$, define $\mc{J}_J$ to be the set of tuples $(j_1,\dots,j_d) \in \mc{J}$ such that $j_1+\dots+j_d = J$.
    \begin{claim}\label{cl:other}
    For every $J\ge I-d$, we have
    $|\mc{J}_J|=O((J - \log_2 \zeta^{-1}+3d)^d)$.
    \end{claim}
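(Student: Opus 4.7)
The plan is to reparametrize each tuple $(j_1,\dots,j_d)\in\mathcal{J}_J$ as a pair $(\vec w,\vec\sigma)$, where $w_k:=|j_k-i_k|\in\mathbb{Z}_{\ge 0}$ and $\sigma_k\in\{+1,-1\}$ records the sign of $j_k-i_k$ (breaking ties arbitrarily when $j_k=i_k$). Since $j_k=i_k+\sigma_k w_k$, the tuple $(j_k)$ is recovered from $(\vec w,\vec\sigma)$, so a counting upper bound reduces to bounding the number of admissible $(\vec w,\vec\sigma)$ pairs; because there are only $2^d=O(1)$ sign vectors for each fixed $\vec w$, the task comes down to bounding the number of admissible $\vec w$.

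The key step is to show that $\sum_k w_k=O(N)$, where $N:=J-\log_2\zeta^{-1}+3d$. Set $T:=\log_2\zeta^{-1}-2d$ and $M_1:=I-T$, and write $S_+:=\{k:\sigma_k=+1\}$, $S_-:=\{k:\sigma_k=-1\}$. For $k\in S_+$ we have $\min(i_k,j_k)=i_k$, while for $k\in S_-$ we have $\min(i_k,j_k)=i_k-w_k$; thus $\sum_k\min(i_k,j_k)=I-\sum_{k\in S_-}w_k$, and \eqref{eq:RHS1} becomes $\sum_{k\in S_-}w_k<M_1$. Separately, $\sum_k j_k=J$ rewrites as $\sum_k\sigma_k w_k=J-I$, which combined with the previous bound yields $\sum_{k\in S_+}w_k=\sum_{k\in S_-}w_k+(J-I)<M_1+(J-I)=J-T$. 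The hypothesis $J\ge I-d$ gives $M_1\le N$ (indeed, $N-M_1=(J-I)+d\ge 0$), and clearly $J-T=N-d\le N$; hence $\sum_k w_k<2N$.

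The claim then follows by direct enumeration: the number of vectors $\vec w\in\mathbb{Z}_{\ge 0}^d$ with $\sum_k w_k<2N$ is $\binom{2N+d-1}{d}=O(N^d)$, and each such $\vec w$ is paired with at most $2^d$ sign vectors, giving $|\mathcal{J}_J|=O(N^d)$ with the implicit constant depending only on $d$. I do not anticipate a serious obstacle; the content lies in the observation that the two defining constraints of $\mathcal{J}_J$ naturally control $\sum_{k\in S_-}w_k$ and $\sum_{k\in S_+}w_k$ separately by $O(N)$. A naive enumeration of $(j_k)$ with $\sum_k j_k=J$ alone would yield only the much weaker $O(J^{d-1})$ bound with $J$ potentially far exceeding $N$; the reparametrization via $|j_k-i_k|$ is what converts the minimum-sum constraint into a genuine bound at the relevant scale $N$.
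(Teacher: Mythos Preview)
Your proof is correct. Both your argument and the paper's hinge on controlling the deviations $j_k-i_k$ using \eqref{eq:RHS1} together with the constraint $\sum_k j_k=J$, so the core idea is the same. The bookkeeping differs slightly: the paper shows, via a two-case contradiction for each coordinate, that $|j_k-i_k|\le N$ for every $k$, which immediately gives $O(N^d)$ choices; you instead split into positive and negative parts and bound the \emph{total} deviation $\sum_k|j_k-i_k|<2N$, then finish with a stars-and-bars count. Your route is arguably cleaner (no case analysis), and the bound $\sum_k w_k<2N$ is formally stronger than the per-coordinate bound, though both collapse to the same $O(N^d)$ conclusion.
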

    Note that the above bound makes sense because $J - \log_2\zeta^{-1}+3d \ge I-\log_2\zeta^{-1}+2d \ge 2d$.
    \begin{claimproof}[Proof of \cref{cl:other}]
        It suffices to show $|j_k-i_k| \le J - \log_2 \zeta^{-1}+3d$ for all $k \in \{1,\dots,d\}$.
    Suppose for the purpose of contradiction that $|j_k-i_k| > J - \log_2 \zeta^{-1} + 3d$ for some $k$.
    
    On the one hand, if $j_k > i_k + J - \log_2\zeta^{-1} + 3d$, then 
    \[\sum_{\ell\neq k} j_\ell = J-j_k < -i_k+\log_2 \zeta^{-1}-3d = \sum_{\ell\neq k}i_\ell - I + \log_2\zeta^{-1} -3d.\]
        This means $\sum_{\ell} \max(i_\ell-j_\ell,0) \ge \sum_{\ell\neq k} \max(i_\ell-j_\ell,0) > I-\log_2\zeta^{-1}+3d$, so 
    \[
        \sum_{\ell=1}^d \min(i_\ell,j_\ell)
        =\sum_{\ell=1}^d i_\ell -\sum_{\ell=1}^d \max(i_\ell-j_\ell,0)
        =I -\sum_{\ell=1}^d \max(i_\ell-j_\ell,0)<\log_2\zeta^{-1}-2d.
    \]
    This contradicts~\eqref{eq:RHS1}.
        On the other hand, if $i_k > j_k+J-\log_2\zeta^{-1}+3d$ ($ > j_k$), it holds that 
    \begin{align*}
        \sum_{\ell=1}^d \min(i_\ell,j_\ell) 
        &= \sum_{\ell=1}^d i_\ell -\sum_{\ell=1}^d \max(i_\ell-j_\ell,0)\le I - (i_k-j_k)\\
        &<I - (J-\log_2\zeta^{-1} +3d)\le I-(I-\log_2\zeta^{-1}+2d)=\log_2\zeta^{-1}-2d,    
    \end{align*}
    which also contradicts~\eqref{eq:RHS1}.
    \end{claimproof}
    
            Given the above claim, we have
    \begin{equation}\label{eq:vol}
    \vol(\Lambda') \le \sum_{J \ge I-d} |\mc{J}_J|\cdot 2^{d-J}
        = O\bigg(\sum_{J \ge I-d} (J - \log_2 \zeta^{-1}+3d)^d \cdot 2^{d-J}\bigg).    
    \end{equation}
    Note that the terms of the last sum in~\eqref{eq:vol} decay exponentially with $J$. Therefore, the entire sum is equal, up to a multiplicative factor depending only on $d$, to its first term.
            Similarly, among all $I \ge \log_2\zeta^{-1}$, the expression $(I-\log_2\zeta^{-1}+2d)^d\cdot 2^{-I}$ is dominated, up to a multiplicative factor depending only on $d$, by the value of the expression for $I = \lceil \log_2\zeta^{-1}\rceil$. By combining~\eqref{eq:vol} and the last two observations, we see $\vol(\Lambda') = O\left( \zeta \right)$, as desired.
\end{proof}

\section{A sharp upper bound on the independence number in the 2-dimensional case}\label{sec:d=2}

In this section, we prove \cref{eq:alpha-2}. As discussed in \cref{sec:outline}, this completes the proof of \cref{thm:d=2}.

The approach is as follows. 
Let $k$ be a sufficiently large multiple of $n\log \log n/\log n$, and expose the first coordinate $p_1$ of every point $(p_1,p_2)\in P$ (from here on, we will only use the randomness of the second coordinate $p_2$). Designate any $k$ of the points of $P$ (identified by their first coordinate) as being \emph{marked}, so the remaining $n-k$ points are \emph{unmarked}. By a union bound, it suffices to show that the probability that the $k$ marked points form an independent set in the Hasse diagram $H(P)$ is $o(1/\binom{n}{k})$.

To this end,
we will reveal the binary expansions of the points in $P$ bit by bit. 
At each step, having revealed $i$ bits, the points of $P$ are partitioned into $2^i$ ``levels''. We will be interested in ``consecutive pairs'' of marked points on each level, which have ``few'' unmarked points lying between them.

More precisely, the possible $2^i$ outcomes of the first $i$ bits partition the $n$ points in $P$ into $2^i$ different \emph{$i$-levels}. We say a point $(w_1,w_2)$ {\em lies between} $(p_1,p_2)$ and $(q_1,q_2)$ (at step $i$) if all three of these points are on the same $i$-level and $p_1 < w_1 < q_1$.
Two marked points $(p_1,p_2)$ and $(q_1,q_2)$ (with $p_1<q_1$) are said to be {\em consecutive after $i$ steps} if they are on the same $i$-level and no other {\em marked} point lies between them.

Set $t:=\floor{(1/2)\log_2 n}$ (this is the total number of bits that we will expose for the second coordinates of the points in $P$). We assume that $n/k$ is of the form $2^r$, and let $s := \floor{r/2}$.
For every $\ell \in \{0,1,\dots,s\}$, pick $\gamma_\ell := (1+1/r)^\ell 2^{2+r-\ell}$ and, for every $i \in \{0,1,\dots,t\}$, define $I_{\ell,i}$ to be the set of pairs $(p,q)$ such that:
\begin{itemize}
    \item $p=(p_1,p_2),q=(q_1,q_2)$ are consecutive after $i$ steps and $p_1 < q_1$, and 
    \item at most $\gamma_\ell$ points $w=(w_1,w_2)$ in $P$ lie between $p$ and $q$ at step $i$.
    \end{itemize}
Also, define the multiset \[I_\ell = \bigcup_{\ell\le i\le t/2+\ell} I_{\ell,i}.\]

The idea is that $I_\ell$ describes the set of all pairs of consecutive points whose ``distance'' is at most some threshold, over the entire duration of the bit-revelation process (here, we are using the word ``distance'' with its meaning from the sketch in \cref{subsubsec:sharper}). 
The following lemma, essentially proved by induction on $\ell$, gives us an extremely-high-probability bound on the sizes of the multisets $I_\ell$. Roughly speaking, it says, that the number of consecutive pairs at a given distance typically scales proportionally with the distance.

\begin{lemma}\label{lemma: short intervals d=2}
Let $r$ be such that $r=\Theta(\log \log n)$, and let $k=n/2^r$. As above, let $t=\floor{(1/2)\log_2 n}$ and $s = \floor{r/2}$. 
Then, with probability at least $1 - \exp(-k t/2^r)$, for every $\ell \in [0,s]$, it holds that 
    \[
        |I_\ell| \ge N_\ell
            := \bigg(1-\frac{1}{r}\bigg)^{\ell} \frac{kt}{8\cdot 2^{\ell}}\ge \frac{kt}{16\cdot 2^{\ell}}.
    \]
                \end{lemma}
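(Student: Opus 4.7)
The plan is to proceed by induction on $\ell$, with the base case handled deterministically and each inductive step coupling a ``survival map'' with a bounded-differences concentration inequality.

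For the \emph{base case} $\ell = 0$, I would prove $|I_0| \geq N_0 = kt/8$ deterministically, with no randomness needed. At each step $i \in \{0, \ldots, t/2\}$, the $k$ marked points are distributed among at most $2^i \leq 2^{t/2} \leq \sqrt{n}$ levels; since $k = n/2^r \gg \sqrt{n}$, this yields at least $k/2$ consecutive marked pairs in total. Every $P$-point lies between at most one such pair, so the total count of between-points is at most $n$. A Markov-type averaging then forces at least $k/4$ of these pairs to have at most $4n/k = \gamma_0$ between-points, giving $|I_{0, i}| \geq k/4$. Summing over $i$ yields $|I_0| \geq (t/2)(k/4) \geq kt/8$.

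For the \emph{inductive step}, assuming $|I_\ell| \geq N_\ell$, the key device is a survival map that sends each $(p, q) \in I_{\ell, i}$ to itself in $I_{\ell+1, i+1}$, provided two independent-ish events occur: (S1) $p$ and $q$ share the same $(i+1)$-th bit of their second coordinate (probability $1/2$), which keeps them on the same $(i+1)$-level and hence consecutive; and (S2) at most $\gamma_{\ell+1}$ of the $m \leq \gamma_\ell$ between-points also share that bit. Given (S1), the count in (S2) is $\mathrm{Bin}(m, 1/2)$ with mean $m/2 \leq \gamma_\ell/2 = \gamma_{\ell+1}/(1+1/r)$. When $m \leq \gamma_{\ell+1}$ condition (S2) is automatic; otherwise, Chernoff gives failure probability $\exp(-\Omega(m/r^2)) \leq 1/r$, using $m > \gamma_{\ell+1} \geq 2^{r/2+1} \gg r^2 \log r$ (as $r = \Theta(\log \log n)$). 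Hence each pair survives with probability at least $(1 - 1/r)/2$, so $\Ex[|I_{\ell+1, i+1}|] \geq (1 - 1/r)|I_{\ell, i}|/2$ conditional on the state at step $i$, and summing gives $\Ex[|I_{\ell+1}|] \geq (1 - 1/r)|I_\ell|/2 \geq N_{\ell+1}$.

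To upgrade this expectation bound to a high-probability statement, I would exploit the crucial structural fact that at each step $i$, distinct consecutive pairs have disjoint between-point sets and share at most one marked endpoint with a neighbouring pair. Thus the survival indicators $(Y_{p,q})_{(p,q) \in I_{\ell, i}}$ depend on $(i+1)$-bits through an $O(1)$-Lipschitz dependence graph, and an Azuma bounded-differences argument -- or simply splitting pairs into two ``independent halves'' on each level -- yields sub-Gaussian concentration of $|I_{\ell+1, i+1}|$ around its mean with variance $O(|I_{\ell,i}|)$. Summing over $i$, applying Cauchy--Schwarz, and then union-bounding over $\ell \in \{0, \ldots, s\}$ would then deliver the required $1 - \exp(-kt/2^r)$ bound (the arithmetic works because $|I_\ell| \geq N_s \geq kt/2^{r/2+4}$ is a factor of $2^{r/2}/r^2 \gg 1$ larger than $kt/2^r$).

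The hard part will be the concentration: one must carefully set up a Doob filtration (revealing the second-coordinate binary digits in an order that isolates the dependence) so that the bounded-differences argument works uniformly across all $i$ and all $\ell$, with the failure probability as small as $\exp(-kt/2^r)$. A subtle point is that the same binary digit of a given point contributes to $I_{\ell, i}$ for multiple $i$ simultaneously, so the martingale increments must be bounded carefully. The slack factor $(1 - 1/r)^\ell$ baked into $N_\ell$ -- as well as the $(1 + 1/r)$ multiplicative slack in the thresholds $\gamma_\ell$ -- is tuned precisely to absorb both the accumulated Chernoff error in (S2) and the Gaussian fluctuation error across the $s = \Theta(\log \log n)$ inductive steps.
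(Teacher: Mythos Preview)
Your base case and survival-map mechanics match the paper exactly. The gap is in the concentration step --- which you correctly flag as the crux, but which the paper dispatches with an observation that sidesteps all the difficulty you anticipate.

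The key point you miss is that the survival events $\mathcal E_{(p,q)}$, for $(p,q)\in I_{\ell-1,i-1}$, are genuinely \emph{mutually independent}, not merely weakly correlated. Order the pairs within each $(i-1)$-level by first coordinate. The only randomness that $\mathcal E_{(p,q)}$ could share with all preceding events is the $i$-th bit of the left endpoint $p$; but $\mathcal E_{(p,q)}$ is in fact independent of that bit, since (i) $q$'s bit matches $p$'s with probability $1/2$ regardless of $p$'s bit, and (ii) by symmetry the number of between-points whose $i$-th bit equals $0$ has the same distribution as the number whose $i$-th bit equals $1$. This lets the paper couple the entire survival process --- over all pairs and all steps $i$ --- with an i.i.d.\ sequence of $\mathrm{Bernoulli}((1-2^{-r})/2)$ trials, reducing everything to $\Pr\big[\mathrm{Bin}(N_{\ell-1},(1-2^{-r})/2) < N_\ell\big]$ and a one-line Chernoff bound. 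Crucially, only the \emph{lower} bound $|I_{\ell-1}|\ge N_{\ell-1}$ is ever used.

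Your Azuma/bounded-differences plan, by contrast, carries a variance proxy that scales with the \emph{actual} size $|I_\ell|$, which can be $\Theta(kt)$ rather than $\Theta(N_\ell)$; plugging this in does not give the stated $\exp(-kt/2^r)$. The ``two independent halves'' idea is closer to the mark, but as stated it still leaves the two halves correlated through shared endpoints, and using only one half costs a factor of~$2$ that you cannot afford. There is also a related quantitative slip: your survival-probability bound $(1-1/r)/2$ equals the ratio $N_{\ell+1}/N_\ell$ exactly, leaving zero slack for any concentration argument to bite on. You must retain the much sharper $(1-2^{-r})/2$ that your own (S2) computation actually delivers --- the $(1-1/r)^\ell$ factor in $N_\ell$ is the budget for concentration fluctuations, not for the (negligible) Chernoff error in~(S2).
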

\begin{proof}
    We first claim that $|I_0| \ge kt/8 = N_0$ holds deterministically.
    Indeed, for every integer $i \in [0,t/2]$, suppose we have revealed the first $i$ bits in the expansion of the second coordinates, thus determining the $i$-levels. On a level with $k'\ge 1$ marked points, there are $k'-1$ consecutive pairs of marked points, so the total number of consecutive pairs is at least $k-2^i\ge k/2$ (using that $i\le t/2\le (1/4)\log_2n$).
    Also, since $2^{r+2}\cdot k/4=n$, for at most $k/4$ consecutive pairs $(p,q)$, there are at least $2^{r+2}$ unmarked points lying between $p$ and $q$.
    So, the rest of the pairs of consecutive marked points are in $I_{0,i}$, so $|I_{0,i}| \ge k/4$.
    In total, $|I_0| \ge k/4\cdot t/2=kt/8$, as claimed.

        In the rest of the proof, we show that, for every $\ell \in [1,s]$, 
    \begin{equation} \label{eq: many intervals}
         \Pr\Big[ |I_\ell|<N_\ell\text{ and } |I_{\ell-1}|\ge N_{\ell-1} \Big] \le \exp(-kt/r^22^{\ell+6}).
    \end{equation}
    To see that this suffices, suppose that \cref{eq: many intervals} holds. Then
    \[
        \Pr\Big[|I_\ell|<N_{\ell}\text{ for some }\ell\in [0,s]\Big]
        \le \sum_{\ell=1}^s \exp(-kt/r^2 2^{\ell+6}) \le \exp(-kt/r^2 2^{s+7})\le \exp(-kt/2^r),
    \]
    as desired. (Here, we used that $s\le r/2$, $r=\omega(1)$ and $kt = \omega(2^r)$.)
        
    We are left to prove \cref{eq: many intervals}.
    To this end, we reveal the binary expansions of the second coordinates of the points in $P$, bit by bit. 
    Suppose we have revealed the first $i-1$ bits of the second coordinates of all $n$ points in $P$, for some $i\in [\ell, t/2+\ell]$.
    Fix any pair $(p,q) \in I_{\ell-1,i-1}$, and let $\mc{E}_{(p,q)}$ be the event that $(p,q) \in I_{\ell,i}$ (defined in the conditional probability space given our revealed information). 
    This event is dependent on the $i$-th bits of the second coordinates of $p,q$ and the $i$-th bits of each of the {\em unmarked} points between $p$ and $q$.
    Note that $p,q$ are on the same $i$-level with probability $1/2$ and, on this event, any unmarked point that was between $p$ and $q$ after $i-1$ steps
    remains between $p$ and $q$ 
        after $i$ steps with probability $1/2$ as well.
    Thus, a standard Chernoff bound (\cref{thm:chernoff}) implies $$
        \Pr[\mc{E}_{(p,q)}] 
        \ge \frac{1}{2} \bigg(1-\mathbb P\bigg(\mathrm{Bin}\bigg(\gamma_{\ell-1}, \frac{1}{2}\bigg) 
        > \frac{\gamma_{\ell-1}}{2} + \frac{\gamma_{\ell-1}}{2r}\bigg)\bigg)
        = \frac{1 - \exp(-\Omega(\gamma_{\ell-1}/r^2))}{2}
                \ge \frac{1-2^{-r}}{2}.
    $$
    Here, we used that $\gamma_{\ell-1}> 2^{r-\ell}\ge 2^{r/2}=\omega(r^3)$, as $r=\omega(1)$.
    
    We now claim that (conditionally on our revealed outcomes of the first $i-1$ bits of each point in $P$) the events $\mc{E}_{(p,q)}$ for $(p,q)\in I_{\ell-1,i-1}$ are mutually independent. 
    To see this, first note that we can partition the pairs $(p,q)\in I_{\ell-1,i-1}$ according to their $(i-1)$-level. 
    The points on different $(i-1)$-levels are clearly independent from each other, so we just need to show that, within each $(i-1)$-level, the corresponding $\mc E_{(p,q)}$ are mutually independent. 
    Consider any $(i-1)$-level and observe that, for two consecuitve pairs $(p,q),(p',q')$ on this level, either $p_1<q_1<p'_1<q'_1$ or $p'_1<q'_1<p_1<q_1$.
    Thus, we may order the pairs $(p,q)\in I_{\ell-1,i-1}$ on this level by the first coordinate of the left-most vertex in each pair. 
    Hence, it suffices to show that, for any $(p,q)$ on this $(i-1)$-level, if we consider all the pairs $(p',q')$ preceding $(p,q)$ (according to our first-coordinate ordering), and we condition on any joint outcome of the corresponding events $\mc{E}_{(p',q')}$, then this conditioning does not affect the probability of $\mc{E}_{(p,q)}$.
    The reason this is true is that our conditioning does not affect any of the points defining $\mc{E}_{(p,q)}$ except possibly $p$, and $\mc{E}_{(p,q)}$ is actually independent of $p$: indeed, irrespectively of the value of the $i$-th bit of $p_2$, the probability that the $i$-th bit of $q_2$ is equal to it is $1/2$.
    
    Now, consider the following thought experiment. 
    Having revealed the first $(i-1)$ digits of each point in $P$, we do \emph{not} immediately reveal the $i$-th digit of all the points in $P$ (thereby immediately revealing whether all the events $\mc{E}_{(p,q)}$ hold). 
    Instead, we consider each of the events $\mc{E}_{(p,q)}$, for $(p,q)\in I_{\ell-1,i-1}$, one by one (in some arbitrary order) and, for each such event, we check whether it holds. 
    We have proved that each of these checks succeeds with probability at least $(1-2^{-r})/2$, conditional on previous checks.

In this way, we can couple the events $\mc{E}_{(p,q)}$ (as they arise over all steps $i\in [\ell,t/2+\ell]$ of the binary revelation process) with an infinite sequence of independent Bernoulli trials (i.e., coin flips), each of which succeeds with probability $(1-2^{-r})/2$. 
Specifically, each time we check whether an event $\mc{E}_{(p,q)}$ occurs, we inspect the next Bernoulli trial in our infinite sequence and, if it succeeds, then $\mc{E}_{(p,q)}$ occurs.

If $|I_{\ell-1}|\ge N_{\ell-1}$, then over the course of the binary revelation process we inspect at least $N_{\ell-1}$ of our Bernoulli trials. If moreover $|I_\ell|<N_\ell$, then fewer than $N_\ell$ of the first $N_{\ell-1}$ Bernoulli trials that we inspect actually succeed. 
So, by a Chernoff bound (\cref{thm:chernoff} with $\mu=((1-2^{-r})/2)N_{\ell-1}$ and $\varepsilon=(r(1-2^{-r}))^{-1}$),
\begin{align*}
        \Pr\Big[ |I_\ell| < N_\ell\text{ and } |I_{\ell-1}| \ge N_{\ell-1} \Big] 
        \le&\, \Pr\left[ \mathrm{Bin}\bigg(N_{\ell-1}, \frac{1-2^{-r}}{2}\bigg) < N_\ell \right] \\
        \le&\, \Pr\left[ \mathrm{Bin}\bigg(N_{\ell-1}, \frac{1-2^{-r}}{2}\bigg) < \frac{1-2^{-r}}{2} N_{\ell-1}-\frac{N_{\ell-1}}{2r} \right] \\
        \le&\, \exp\left( \frac{1}{2r^2(1-2^{-r})^2}\cdot \frac{1-2^{-r}}{2} N_{\ell-1} \right) 
        \le\, \exp\left(-\frac{kt}{r^2 2^{\ell+6}} \right),
\end{align*}
which shows~\eqref{eq: many intervals} and finishes the proof.
\end{proof}

We now use \cref{lemma: short intervals d=2} to deduce the desired upper bound on $\alpha(H(P))$ in \cref{eq:alpha-2}.
\begin{proof}[Proof of \cref{eq:alpha-2}]
Recall that $H(P)$ is the Hasse diagram of $P$, a set of $n$ uniformly random points in $[0,1]^2$.
Set $r$ to be the largest integer satisfying $2^r\cdot r \le \log_2 n / 2^{60}$ (so $2^r=\Theta(\log n/\log \log n)$).
In addition, recall that $k = n/2^r$, and recall that we have exposed the first coordinate of every point in $P$, and identified $k$ marked points among the points in $P$. Define the event $$\cF = \{\text{the $k$ marked points span no edge in } H(P)\};$$ our objective is to show that $\Pr[\cF] \le o(1/\binom nk)$.

Note that $\binom{n}{k}\le (\e n/k)^k=(\e\cdot 2^r)^k = o(\e^{kr})$, so in fact it suffices to prove that $\Pr[\cF] \le \e^{-kr}$. This will be our objective for the rest of the proof.

Fix any $i \in [0,t/2+s]$, and suppose we have revealed the first $i$ bits in the binary expansion of the points in $P$. 
This determines the sets $I_{0,i},I_{1,i},\dots,I_{s,i}$ (which we defined immediately before \cref{lemma: short intervals d=2}); they satisfy $I_{0,i} \supseteq I_{1,i}\supseteq \dots \supseteq I_{s,i}$.

Define $w_i := \sum_{\ell=0}^s |I_{\ell,i}|2^{\ell}$; we call this the {\em score of bit $i$}.
For each pair $(p,q) \in I_{0,i}$, let $\cG_{(p,q)}$ be the event that, for some $j<r$, both $p$ and $q$ are on the same $(i+j)$-level with no unmarked points between them, but that $p$ and $q$ are on different $(i+r)$-levels, with $p_2<q_2$.
Crucially, if $\cG_{(p,q)}$ occurs, then we are able to detect that $p$ and $q$ form an edge of $H(P)$ while revealing only $r$ additional bits of the second coordinates of the points in $P$.
\begin{claim} \label{claim: probability in terms of scores}
    Conditional on our revealed outcomes of the first $i$ bits of each point in $P$, the probability that none of the events $\cG_{(p,q)}$ occur, among $(p,q) \in I_{0,i}$, is at most $\exp(-w_i/2^{r+50})$.
\end{claim}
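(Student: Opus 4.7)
The plan is to lower bound $\Pr[\mathcal{G}_{(p,q)}]$ for each $(p,q) \in I_{0,i}$ by the probability of a carefully chosen sub-event, extract a subfamily of these events that are mutually independent, and then apply a product bound.

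For each $(p,q)\in I_{0,i}$, let $\ell^*=\ell^*(p,q)$ denote the largest index in $\{0,\dots,s\}$ for which $(p,q)\in I_{\ell^*,i}$, so the set $B_{p,q}$ of unmarked points lying between $p$ and $q$ at step $i$ has size at most $\gamma_{\ell^*}$. I would define a sub-event $\mathcal{G}'_{(p,q)}\subseteq \mathcal{G}_{(p,q)}$ by imposing: (i) after revealing the next $r-\ell^*-1$ bits, $p$ and $q$ still lie on the same level; (ii) no point of $B_{p,q}$ lies between $p$ and $q$ at step $i+r-\ell^*-1$; (iii) among the remaining $\ell^*+1$ bits (up to step $i+r$), the expansions of $p_2$ and $q_2$ disagree in at least one position; and (iv) $p_2<q_2$. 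Since the bits of distinct points in $P$ are independent, a direct computation gives
\[
\Pr[\mathcal{G}'_{(p,q)}] \ge 2^{-(r-\ell^*-1)}\cdot (1-2^{-(r-\ell^*-1)})^{\gamma_{\ell^*}}\cdot (1-2^{-(\ell^*+1)})\cdot \tfrac{1}{2} \ge c_0\cdot 2^{\ell^*-r}
\]
for an absolute constant $c_0>0$, using that $\gamma_{\ell^*}\cdot 2^{-(r-\ell^*-1)}\le 8 e^{1/2}$ keeps the middle factor bounded away from $0$.

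Next, the event $\mathcal{G}_{(p,q)}$ is measurable only with respect to the bits of $p$, $q$, and the points in $B_{p,q}$. Hence two consecutive pairs on the same $i$-level that share no marked endpoint depend on disjoint sets of bits, and the same holds across different $i$-levels. The consecutive marked pairs on each $i$-level form a path (two pairs share an endpoint iff they are adjacent in this path), so one of the two alternating selections along this path accounts for at least half of the weight $\sum 2^{\ell^*(p,q)}$ on that level. Taking this heavier alternation on each $i$-level produces a subfamily $\mathcal{I}\subseteq I_{0,i}$ of pairs without shared endpoints (for which the events $\mathcal{G}'_{(p,q)}$ are mutually independent) satisfying
\[
\sum_{(p,q)\in \mathcal{I}} 2^{\ell^*(p,q)} \ge \tfrac{1}{2}\sum_{(p,q)\in I_{0,i}} 2^{\ell^*(p,q)} \ge \tfrac{w_i}{4},
\]
where the last inequality uses the identity $w_i = \sum_{(p,q)\in I_{0,i}}(2^{\ell^*(p,q)+1}-1)$.

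Finally, using mutual independence and $1-x\le e^{-x}$,
\[
\Pr\bigg[\bigcap_{(p,q)\in I_{0,i}}\overline{\mathcal{G}_{(p,q)}}\bigg]
\le \prod_{(p,q)\in \mathcal{I}}\paren{1-\Pr[\mathcal{G}'_{(p,q)}]}
\le \exp\bigg(-c_0\sum_{(p,q)\in \mathcal{I}}2^{\ell^*(p,q)-r}\bigg)
\le \exp\bigg(-\frac{c_0\, w_i}{2^{r+2}}\bigg),
\]
which is at most $\exp(-w_i/2^{r+50})$ since $c_0$ is a fixed positive absolute constant. The main technical subtlety will be justifying the mutual independence of the selected sub-events (confirming that the additional bits revealed after step $i$ really are disjoint across pairs in $\mathcal{I}$) and verifying that the alternating selection preserves a constant fraction of the weighted sum $\sum 2^{\ell^*(p,q)}$, and hence a constant fraction of $w_i$.
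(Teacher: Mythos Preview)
Your argument is correct. The lower bound on $\Pr[\mathcal G'_{(p,q)}]$, the identity $w_i=\sum_{(p,q)\in I_{0,i}}(2^{\ell^*(p,q)+1}-1)$, and the alternating selection on each $i$-level all check out, and the final product bound gives the claimed $\exp(-w_i/2^{r+50})$ with room to spare.

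The paper's proof differs mainly in how independence is obtained. Rather than passing to an alternating subfamily, it asserts that \emph{all} the events $\mathcal G_{(p,q)}$ for $(p,q)\in I_{0,i}$ are mutually independent, invoking the ``independence from the left endpoint'' observation used in the proof of \cref{lemma: short intervals d=2} (the distribution of the next bits of $q$ relative to $p$ is unaffected by conditioning on $p$'s bits, so one can process the pairs left-to-right along each level). This lets the paper keep every pair in $I_{0,i}$ and telescope $\prod_\ell (1-2^{-r+\ell-49})^{|I_{\ell,i}|-|I_{\ell+1,i}|}$ directly. Your route trades a factor of~$4$ in the exponent for a more transparent independence argument (disjoint supports), which is a perfectly good trade given the slack in the constant~$2^{50}$.
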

\begin{claimproof}
    Fix any $\ell\in [0,s]$ and any pair $(p,q) \in I_{\ell,i} \setminus I_{\ell+1,i}$ (with the convention that $I_{s+1,i}=\emptyset$).
    We first show that $\Pr[\mc{G}_{(p,q)}] \ge 2^{-r+\ell-49}$.
    To this end, we first expose the next $r-\ell-1$ bits in the binary representations of the second coordinates of all points between $p$ and $q$ at step $i$.
   The points $p$ and $q$ end up in the same $(i+r-\ell-1)$-level with probability $2^{-r+\ell+1}$.
    Conditionally on this event, consider all the unmarked points that lay between $p$ and $q$ at step $i$. The probability that none of these unmarked points remain between $p$ and $q$ at step $i+r-\ell-1$ is at least 
    is at least $(1-2^{-r+\ell+1})^{\gamma_\ell} \ge (1-2^{-r+\ell+1})^{2^{3+r-\ell}}\ge \e^{-32}$.
    Here, we used that $2^{-r+\ell}\le 2^{-r/2}=o(1)$ and $1-x \ge \e^{-2x}$ when $x=o(1)$.
    If we expose one additional bit then, with probability $1/4$, the second coordinate of $q$ dominates the second coordinate of $p$, meaning that $p,q$ form an edge in $H(P)$.
        Hence, $\Pr[\mc{G}_{(p,q)}] \ge 2^{-r+\ell+1}\cdot \e^{-32}/4\ge 2^{-r+\ell-49}$, as desired.

    By similar reasoning as in the proof of \cref{lemma: short intervals d=2}, the events $\mc G_{(p,q)}$, for all $(p,q) \in I_{0,i}$, are mutually independent.
    So, using that $\e^{-x} \le 1-x$, the probability that no $\mc{G}_{(p,q)}$ occurs is at most 
    $$
      \begin{aligned}
        \prod_{\ell=0}^{s} \left( 1-2^{-r+\ell-49} \right)^{|I_{\ell,i}|-|I_{\ell+1,i}|}
        \le&\, \exp\left(-\sum_{\ell=0}^s 2^{-r+\ell-49} \left(|I_{\ell,i}|-|I_{\ell+1,i}|\right) \right) \\
        =&\, \exp\left( -2^{-r-49}|I_{0,i}| - \sum_{\ell=1}^s 2^{-r+\ell-50} |I_{\ell,i}|\right) \\
        \le &\, \exp\left( -\sum_{\ell=0}^s 2^{-r+\ell-50} |I_{\ell,i}|\right) = \exp\left(-\frac{w_i}{2^{r+50}}\right),
      \end{aligned}
    $$
    as claimed.
\end{claimproof}

We continue with the proof of \cref{eq:alpha-2}. Let $\mc E$ be the event that the conclusion of \cref{lemma: short intervals d=2} holds, i.e., that $|I_\ell|\ge kt/(16\cdot 2^\ell)$ for each $\ell\in [0,s]$. Then, $\Pr[\mc E]\ge 1-\exp(-kt/2^r)$, and when $\mc E$ holds, we have
\[\sum_{i=0}^{t/2+s} w_i \ge \sum_{\ell=0}^{s} |I_\ell|2^\ell \ge (s+1)kt/16\]
Moreover, for every $j\in \{1,\ldots,r\}$, define the event
\[
    \cE_j := \bigg\{ \sum_{i\le t/2+s,\; i\equiv j\hspace{-0.7em}\mod r} w_i \ge \frac{(s+1)kt}{16r}\ge \frac{k t}{2^5}\bigg\}
\]
and note that, by the pigeonhole principle, $\cE\subseteq \bigcup_{j=1}^r\cE_j$.

Fix any $j\in \{1,\ldots,r\}$. 
To estimate $\Pr[\mc{F} \cap \mc{E}_j]$, we reason similarly to the proof of \cref{lemma: short intervals d=2}, coupling with an infinite sequence of independent Bernoulli trials.

This time, we consider an infinite sequence of independent Bernoulli trials, which each \emph{fail} with probability $\rho:=\exp(-1/2^{r+50})$. Let $\ol{\mc G}_i$ be the event that none of the $\cG_{(p,q)}$ occur, among all $(p,q) \in I_{0,i}$. 
We can interpret $\ol{\mc G}_i$ as the event that, starting with the first $i$ bits in the binary representation of the second coordinate of each marked point, we do not manage to detect an edge of $H(P)$ by revealing the next $r$ bits.

Now, we consider an auxiliary (simplified) revelation process: imagine that we consider each $i\equiv j\hspace{-0.2em}\mod r$ in turn and, for each such $i$, we reveal $w_i$ and then we reveal whether $\ol{\mc G}_i$ occurs. By \cref{claim: probability in terms of scores}, $\ol{\mc G}_i$ occurs with (conditional) probability at most $\exp(-w_i/2^{r+50})=\rho^{w_i}$. So, we can couple our auxiliary process with our infinite sequence of Bernoulli trials so that, at the moment we check whether $\ol{\mc G}_i$ occurs, we inspect the next $w_i$ Bernoulli trials in our infinite sequence and, if they all fail\footnote{one should not necessarily ascribe any meaning to the individual Bernoulli trials; they are just a device to simulate probabilities of the form $\exp(-w_i/2^{r+50})$.} (which happens with probability $\rho^{w_i}$), then $\ol{\mc G}_i$ occurs.

If $\mc E_j$ occurs, then over the course of the binary revelation process we inspect at least $kt/2^5$ of our Bernoulli trials. If this happens but $\mc F$ does not occur, then the first $kt/2^5$ of the trials that we inspect all fail. That is to say, $\Pr[\mc{F} \cap \mc{E}_j] \le \exp(-(kt/2^{5})/2^{r+50})=\exp(-kt/2^{r+55})$.

Summing over all $j$, we obtain that 
$$
    \Pr[\mc{F}] 
    \le \Pr[\mc{E}^c] + \sum_{j=1}^{r} \Pr[\mc{F}\cap \mc{E}_j]
    \le \exp\left(-\frac{kt}{2^r}\right) + r\cdot \exp\left(-\frac{kt}{2^{r+55}}\right)
    \le n\cdot \exp\left(-\frac{kt}{2^{r+55}}\right).
$$
Now, using that $2^r\cdot r \le \log_2 n / 2^{60}$, $t = \floor{(1/2)\log_2 n}\ge \frac{1}{4}\log_2 n$ and $\exp(kr) \ge n$, we acquire $$
    \Pr[\mc{F}] \le n\cdot \exp\left(-k\cdot \frac{\log_2 n}{4}\cdot \frac{2^{60}r}{\log_2 n} 2^{-55} \right) 
    < n\cdot \exp(-2kr) \le \exp(-kr).
$$
This finishes the proof.
\end{proof}

\section{A general upper bound on the independence number}\label{sec:d>2}
In this section, we prove \cref{eq:alpha}. As described in \cref{sec:outline}, this completes the proof of \cref{thm:d>2}.

\newcommand{\dd}{r}

For a positive integer $r$ and two points $p,q\in [0,1]^\dd$, we write $p\prec q$ if all the coordinates of $p$ are less than the corresponding coordinates of $q$. Our proof of \cref{eq:alpha} mainly comes down to the following technical lemma, which we will prove by induction.

\begin{lemma} \label{lem:inductive}
Fix a positive integer $\dd$, and suppose $k,n$ are sufficiently large with respect to $\dd$, satisfying $k\le n$. Consider any $T\in[1, \sqrt{\log k}\,]$ and $Q \in [1,k^{1/\dd}/4]$, and fix a set $\Omega$ of size $Q$.
    \begin{itemize}
        \item Fix a labelling $f: \{1,\ldots,n\} \to \Omega$.
        \item Fix a set $X \subseteq \{1,\ldots,n\}$ of size $k$.
        \item Let $p_1,\dots,p_n$ be $n$ uniformly random points in $[0,1]^{\dd}$.
    \end{itemize}
We say that a point $p_i$ has \emph{level} $f(i)$, and we say that $p_i$ is \emph{marked} if $i\in X$.
Let $X^2_f$ be the set of pairs $(i,j) \in X^2$ where $p_i \prec p_j$ and $f(i)=f(j)$. For a pair $(i,j)\in X^2_f$, let 
    \[
        \on{Box}_{f}(i,j)=\{\ell\in \{1,\ldots,n\}\;:\;f(i)=f(\ell)=f(j)\text{ and } p_i \prec p_\ell \prec p_j\}
    \]
    be the set of indices of points strictly between $p_i$ and $p_j$ and on the same level. 
    
    Then, with probability at least $1-\exp(-17^{-\dd}kT)$, there is a set $\mathcal{X}\subseteq X^2_f$ of $8^{-\dd}k$ disjoint pairs $(i,j)$, each of them satisfying
    \begin{equation}\label{eq:box}
        |\on{Box}_{f}(i,j)|<\frac{8n}{k}\left(\frac{1}{8}\floor{\frac{\log k}{8\dd^{2}T\log\log k}}\right)^{1-\dd}.
    \end{equation}
    and $\on{Box}_f(i,j)\cap X=\emptyset$.
    \end{lemma}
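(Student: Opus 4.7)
The plan is to proceed by induction on the dimension $r$. For the base case $r=1$, within each of the $Q \le k/4$ levels $\omega$, sort the $|X\cap f^{-1}(\omega)|$ marked points by coordinate and take consecutive pairs. This produces at least $(k-Q)/2 \ge 3k/8$ disjoint pairs in $X^2_f$ whose boxes contain no marked points. Within each level, the boxes of distinct consecutive pairs are disjoint subsets of $\{1,\ldots,n\}$, so their total size is at most $n$; hence at most $k/8$ of these pairs have box size $\ge 8n/k$, leaving at least $k/4 \ge k/8$ pairs that satisfy the conclusion deterministically (consistent with the bound since the factor $(M/8)^{1-r}$ equals $1$ when $r=1$).

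For the inductive step ($r\ge 2$), I would reveal the $r$-th coordinate of every point in base-$L$ digits, for a power of $2$ value $L$ chosen so that $L^M \le k^{1/(r(r-1))}$ (which is admissible because $Q \le k^{1/r}/4$). Writing $\tilde f_t:\{1,\ldots,n\}\to\Omega\times\{0,\ldots,L-1\}^t$ for the labeling refined by the first $t$ such digits, I would iterate over $t = 0, 1, \ldots, M-1$. At each step $t$, apply the inductive hypothesis in dimension $r-1$ to the first $r-1$ coordinates (which remain fully random) with labeling $\tilde f_t$ and a suitable parameter $T' \le T$, obtaining with high probability a family $\mc Y_t$ of at least $k/8^{r-1}$ disjoint pairs satisfying the $(r-1)$-dim conclusion. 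The crucial dimension-reduction observation is that whenever $(i,j)$ lie on the same $\tilde f_t$-level, any $\ell$ with $p_i\prec p_\ell\prec p_j$ (in all $r$ coordinates) and $f(\ell)=f(i)$ automatically has $p_{\ell,r}$ lying in the same $L^{-t}$-ary interval as $p_{i,r}, p_{j,r}$, hence on the same $\tilde f_t$-level; and moreover $p'_i \prec p'_\ell \prec p'_j$ on the first $r-1$ coordinates. This gives the containment $\on{Box}_f(i,j)\subseteq\on{Box}^{(r-1)}_{\tilde f_t}(i,j)$, which transfers the marked-point-disjointness condition and a weak box-size bound to the $r$-dimensional side for free.

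To upgrade the $(r-1)$-dim box exponent $2-r$ to the target $r$-dim exponent $1-r$, I would exploit the remaining randomness of the $r$-th coordinates of the pair endpoints: revealing the $(t+1)$-st base-$L$ digit of $p_{i,r}$ and $p_{j,r}$, with probability $\Theta(1/L)$ the two digits differ by exactly one with $p_{i,r}<p_{j,r}$, which forces $|p_{i,r}-p_{j,r}|\le 2L^{-(t+1)}$ and typically shrinks $\on{Box}_f(i,j)$ (viewed as the subset of the $(r-1)$-dim box whose $r$-th coordinates land in the sub-interval $(p_{i,r},p_{j,r})$) by a factor of order $L$. Further bit-revelation amplifies this gain. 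Summing across $t$ and applying Chernoff concentration yields $\Omega(k)$ good pairs, from which a greedy matching extracts the required $8^{-r}k$ disjoint ones. The main obstacle is the delicate calibration of parameters: $L$, $M$, and $T'$ must be tuned so that the box bound shrinks by exactly a factor of order $M/8$ per dimension step, while the accumulated error probability --- from $M$ independent applications of the $(r-1)$-dim hypothesis (each of failure $\exp(-17^{-(r-1)}kT')$) combined with the Chernoff terms for filtering --- must fit within the target $\exp(-17^{-r}kT)$. The precise constants $17$ and $8$ are dictated by these competing constraints, with the gap factor $17/8$ providing the slack needed to absorb the union bound across the $M$ steps.
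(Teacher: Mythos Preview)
Your approach is essentially the paper's: induction on $r$, with the base case handled by consecutive pairs plus pigeonhole, and the inductive step by revealing the $r$-th coordinate in base-$L$ digits, applying the $(r-1)$-dimensional hypothesis at each digit step, and using the next digit to shrink the box by a factor of order $L$. The paper takes $L=\lfloor \log k/(8r^2 T\log\log k)\rfloor$ (no need for $L$ to be a power of $2$) and uses the same $T$ throughout (no auxiliary $T'$); the number $M$ of steps is chosen so that $QL^M$ just exceeds $k^{1/(r-1)}/4$, which ensures $M\ge 8LT$ and drives the Chernoff bound.

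The one place your sketch is genuinely looser is the phrase ``typically shrinks $\on{Box}_f(i,j)$ by a factor of order $L$''. As written, this conflates two sources of randomness: the next digit of the endpoints $i,j$ (marked) and the next digits of the points inside the box (mostly unmarked). Since different pairs $(i,j)$ can share unmarked points in their $(r-1)$-dimensional boxes, you do not automatically get the independence across pairs that Chernoff requires. The paper resolves this by a two-stage revelation: first reveal the $(t+1)$-st digit of every \emph{unmarked} point, which deterministically identifies a set $S'_{i,j}\subseteq\{0,\ldots,L-1\}$ of size at least $L/4$ of ``good'' digit values $s$ (those for which the sub-boxes at digits $s$ and $s+1$ each contain at most a $4/L$-fraction of the $(r-1)$-dimensional box). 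Only then are the $(t+1)$-st digits of the marked endpoints revealed; the pair succeeds iff $\delta_{t+1}(p_i)\in S'_{i,j}$ and $\delta_{t+1}(p_j)=\delta_{t+1}(p_i)+1$, an event of probability at least $|S'_{i,j}|/L^2\ge 1/(4L)$ that depends only on the digits of $i$ and $j$, hence is independent across the disjoint pairs in $\mathcal Y_t$. Your ``further bit-revelation amplifies this gain'' line is not how the factor of $L$ is obtained and should be dropped. Finally, disjointness is maintained incrementally by pruning $\mathcal Y_t$ against the already-collected pairs $\mathcal X_t$, so the greedy matching at the end is unnecessary.
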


Recalling the sketch in \cref{subsubsec: d dimensions}, we say that the pairs $(i,j)\in \mc X$ in the conclusion of \cref{lem:inductive} are ``$r$-dimensional suitable pairs''. For $r=d$, and appropriate choices of $k=n/(\log n)^{d-1+o(1)}$ and $T=\Theta(\log \log n)$, it is not hard to see that \cref{eq:box} implies that $|\on{Box}_{f}(i,j)|$ is actually empty, meaning that $p_i$ and $p_j$ form an edge of the Hasse diagram of the points $p_1,\dots,p_n$. That is to say, \cref{eq:alpha} is a fairly immediate consequence of the $r=d$ case of \cref{lem:inductive} (we will see the details of this deduction momentarily).

For the inductive step, we need to show how to find $r$-dimensional suitable pairs among random points in $[0,1]^r$, equipped with the ability to find $(r-1)$-dimensional suitable pairs among random points in $[0,1]^{r-1}$ (arbitrarily partitioned into ``levels''). We do this by first revealing the first $r-1$ coordinates of our random points (all at once), then gradually revealing their $r$-th coordinates in an $L$-ary digit-by-digit fashion (for appropriate $L$). For each $i$, knowing the first $i-1$ digits of every point naturally partitions the points into $L^{i-1}$ ``levels''. Induction provides us with $(r-1)$-dimensional suitable pairs; with the randomness of the $i$-th digit of the $r$-th coordinate of all the marked points, we study how these $(r-1)$-dimensional suitable pairs can lead to $r$-dimensional suitable pairs. (Recall from the discussion in \cref{subsubsec: d dimensions} that we only use the randomness of the $i$-th digits of the \emph{marked} points, in order to overcome certain independence issues).
\begin{proof}[Proof of \cref{eq:alpha}]Set $k=(10^5 d)^{10^5 d}n((\log\log n)^2/\log n)^{d-1}$; we will show that whp $\alpha(H(P))< k$. It is convenient to view $P$ as a set of $n$ random \emph{labelled} points $p_1,\dots,p_n\in [0,1]^d$ so, even before we expose the positions of the points, we can refer to them by their index in $\{1,\dots,n\}$.

We proceed by a union bound over all $\binom n k$ subsets  of $k$ vertices. So, fix any subset $X\subseteq \{1,\dots,n\}$ of size $k$.
In the language of \cref{lem:inductive}, with $\dd = d$ and $\Omega=\{1\}$,
we need to show that, 
with probability at least $1-o\left(1/\binom{n}{k}\right)$, there is a pair $(i,j) \in X^2_f$ such that $p_i \prec p_j$ and $\on{Box}_f(i,j)=\emptyset$.
This follows from \cref{lem:inductive} applied with $T=100^{d}\log\log n$ and the inequality $\binom n k\le (\e n/k)^k$.
\end{proof}

Now, we prove \cref{lem:inductive}.

\begin{proof}[Proof of \cref{lem:inductive}]
    We proceed by induction on $\dd$. If $\dd=1$, the desired statement
    actually holds with probability 1. Indeed, in a layer with $k'\ge 2$ marked points, we can sort the marked points in increasing order and take $\lfloor k'/2\rfloor \ge k'/2-1$ disjoint consecutive pairs (taking the first and second points, then the third and fourth points, then the fifth and sixth points, and so on). Since there are $Q$ different levels, in this way we obtain a collection $\mc Y$ of at least $k/2-Q\ge k/4$ disjoint pairs $(i,j)\in X^2$ such that the vertices in each pair lie on the same layer and satisfy $p_i \prec p_j$. 
    
    Since the pairs in $\mc Y$ are consecutive, the sets $\on{Box}_f(i,j)$, for $(i,j)\in \mc Y$, are disjoint and contain no marked points. By the pigeonhole principle, at least $k/8$ of them contain no more than $8n/k$ unmarked points, and we can take $k/8$ such sets as our desired collection $\mc X$.
    
    Now, fix $\dd\in \{2,\ldots,d\}$ and suppose that the statement is true for $\dd-1$.
    We will prove it for $\dd$. To this end, we need several definitions.
    \begin{itemize}
    \item For $i \in \{1,\ldots,n\}$, let $q_i \in [0,1]^{\dd-1}$ be the projection of $p_i$ onto the first $\dd-1$ coordinates.
        Note that $q_1,\dots,q_n$ can be viewed as $n$ uniformly random points in $[0,1]^{\dd-1}$.
    \item Let 
        \begin{equation}\label{eq: define L}
            L=\floor{\frac{\log k}{8\dd^{2}T\log\log k}} \in \left[2,\, \frac{\log k}{8\dd(\dd-1)T\log\log k} \right].
        \end{equation}
    Here, we are using that $T\le \sqrt{\log k}$ and that $k$ is sufficiently large compared to $\dd\le d$.
    \item For each $m\ge 0$, let $\delta_{m}:[0,1]^{\dd}\to\{0,\dots,L-1\}$ be the function that maps a point $p \in [0,1]^{\dd}$ to the $m$-th digit in the $L$-ary expansion of its $\dd$-th coordinate (this is uniquely defined except on a subset of $[0,1]$ of measure zero).
    \item For each $m\ge 0$, let $\Omega_{m}=\Omega\times\{0,\dots,L-1\}^{m}$ and, for $i\in \{1,\ldots,n\}$,
    let 
    \[f_{m}(i)=(f(i),\delta_{1}(p_i),\dots,\delta_{m}(p_i)).\] 
    Crucially, for each $m$, the labelling $f_m$ is independent from the projections $q_1,\dots,q_m$ (because for a uniformly random point in $[0,1]$, its last coordinate is independent from its other coordinates).
    \item Define $M=\min\{m\in\mb N:QL^m>k^{1/(\dd-1)}/4\}$. Using that $Q \le k^{1/\dd}/4$ and \cref{eq: define L}, we have
        \begin{align}\label{eq:I-L}
            M & > \frac{\log(k^{1/(\dd-1)-1/\dd})}{\log L}
            = \frac{\log k}{\dd(\dd-1)\log L} 
            \ge \frac{\log k}{\dd(\dd-1)\log \log k}
                        \ge 8LT.
        \end{align}
    \end{itemize}
    For $m<M$, note that $|\Omega_m|\le QL^m\le k^{1/(\dd-1)}/4$. For every such $m$, define $\on{Box}_{f_m}'(i,j)$ as the set of indices $\ell \in \{1,\ldots,n\}$ such that $f_m(i)=f_m(\ell)=f_m(j)$ and $q_i \prec q_\ell \prec q_j$ (this is the counterpart of $\on{Box}_f(i,j)$ with respect to the projected points $q_1,\dots,q_n$ and the labelling $f_m$).
    Let $\mathcal{E}_{m}$ be the event that there does \emph{not} exist a collection of $8^{-(\dd-1)}k$ disjoint pairs $(i,j) \in X^2$, each satisfying $q_i \prec q_j$, $f_m(i)=f_m(j)$, $\on{Box}_{f_{m}}'(i,j)\cap X=\emptyset$ and
        \[
        |\on{Box}_{f_{m}}'(i,j)|<\frac{8n}{k}\left(\frac{8}{L}\right)^{(\dd-1)-1}.
    \]
    By the inductive assumption (i.e.\ the $(\dd-1)$-case of the lemma) applied for the points $q_1,\dots,q_n$ and the labelling $f_m$, we obtain $\Pr[\mathcal{E}_{m}]\le\exp\left(-17^{-(\dd-1)}kT\right)$.
    So, using that $M = O(\log k)$, we have that
    \begin{equation}\label{eq:not-abort}
        \Pr[\mathcal{E}_{0}\cup\dots\cup\mathcal{E}_{M-1}]
        \le M\exp(-17^{-(\dd-1)}kT)\le\exp(-17^{-\dd}kT)/2.
    \end{equation}
    
    Now, consider the following recursive algorithm that attempts to find
    our desired set $\mathcal{X}$ by ``exploring consecutive digits of the $L$-ary expansion of the $\dd$-th coordinate''.
    \begin{enumerate}
        \item Let $\mathcal{X}_{0}=\emptyset$.
        \item For each $m\in\{0,\dots,M-1\}$, we define $\mathcal{X}_{m+1}$ as
        follows.
        \begin{enumerate}
            \item If $\mathcal{E}_{m}$ occurs, abort the entire algorithm. 
                Otherwise, let $\mathcal{Y}_{m}$ be a collection of $8^{-(\dd-1)}k$ disjoint pairs $(i,j) \in X^2$, each satisfying $q_i \prec q_j$, $f_m(i)=f_m(j)$ and
                                \[
                    |\on{Box}_{f_{m}}'(i,j)|<\frac{8n}{k}\left(\frac{8}{L}\right)^{(\dd-1)-1}.
                \]
            \item Let $\mathcal{Y}_{m}'$ be obtained from $\mathcal{Y}_{m}$ by deleting
            every pair which intersects a pair in $\mathcal{X}_{m}$. 
            In~particular, if $|\mathcal{X}_{m}|<8^{-\dd}k$, then 
            $|\mathcal{Y}_{m}'|\ge |\mathcal{Y}_{m}|-2|\mathcal{X}_{m}|\ge 8^{-\dd}k$.
            \item For every $(i,j)\in\mathcal{Y}_{m}'$, let $S_{i,j}\subseteq\{0,\dots,L-1\}$
            be the set of all $s\in\{0,\dots,L-1\}$ such that
                \[
                \Big|\Big\{\ell\in \on{Box}_{f_{m}}'(i,j):\delta_{m+1}(p_\ell)=s\Big\}\Big|
                    \le\frac{4}{L}|\on{Box}_{f_{m}}'(i,j)|.
                \]
            Hence, $|S_{i,j}|\ge 3L/4$.
            \item Let $S_{i,j}'$ be the set of $s \in \{0,\dots,L-2\}$ such that $\{s,s+1\} \subseteq S_{i,j}$.
                We know that $$|S_{i,j}'| \ge |S_{i,j}|-1-(L-|S_{i,j}|) \ge L/4.$$
            \item We obtain $\mathcal{X}_{m+1}$ from $\mathcal{X}_{m}$ by adding
            all pairs $(i,j)\in\mathcal{Y}_{m}'$ for which $\delta_{m+1}(i)=\delta_{m+1}(j)-1\in S_{i,j}'$.
                For each such pair, we are guaranteed to have $p_i \prec p_j$ and 
                \[
                    |\on{Box}_{f}(i,j)|\le2\cdot \frac{4}L|\on{Box}_{f_{m}}(i,j)|
                    <\frac{8n}{k}\left(\frac{8}{L}\right)^{\dd-1}.
                \]
        \end{enumerate}
        \item In the end, we set $\mc{X} := \mc{X}_{M}$.
    \end{enumerate}
    Note that, if the algorithm does not abort, then it produces a set $\mathcal{X}$ satisfying the properties required in the lemma except that $|\mathcal{X}|$ might not be large enough (recall that we need $|\mathcal{X}_{M}|\ge 8^{-\dd}k$). 
    Recalling \cref{eq:not-abort}, we already know that the algorithm is unlikely to abort, so we artificially set $|\mathcal{X}|=\infty$ when it does. For the rest of the proof, our objective is to show that
    \[
        \Pr[|\mathcal{X}|<8^{-\dd}k]\le\exp\left(-17^{-\dd}kT\right)/2.
    \]
    Now, we explain how to couple the above algorithm with a sequence of Bernoulli trials in such a way that, whenever $|\mathcal{X}|<8^{-\dd}k$, an atypically small number of our Bernoulli trials must have succeeded
    (which allows us to conclude by applying a standard Chernoff bound).
    
    To this end, first note that, for each $m \in \{0,\dots, M-1\}$, to determine the set $\mathcal{Y}_{m}'$,
    we only need to reveal $q_1,\dots,q_n$ and $f_{m}(1),\dots,f_m(n)$ (that is,
    we reveal the first $\dd-1$ coordinates of each point and the first $m$ digits in the $L$-ary expansion of the last coordinate of each point). 
    As mentioned in the description of the algorithm, if we do not already have $|\mathcal{X}_{m}|\ge 8^{-\dd}k$ (which would directly yield $|\mathcal{X}|\ge 8^{-\dd}k$), then we have $|\mathcal{Y}_{m}'|\ge 8^{-\dd}k$. 
    Next, to determine the sets $S_{i,j}$ and $S_{i,j}'$, we need to further
    reveal $\delta_{m+1}(p_i)$ for $i \in \{1,\ldots,n\}\setminus X$, (that is, reveal the $(i+1)$-th
    digit in the $L$-ary expansion of the last coordinate of each unmarked point).
    Conditionally on the revealed information, the values of $\delta_{m+1}(p_i)$ for $i\in X$ are still uniformly random, so each of the (at least $8^{-\dd}k$) pairs $(i,j)\in\mathcal{Y}_{m}'$ contributes to $\mathcal{X}_{m+1}$ with probability at least $(|S_{i,j}'|/L)\cdot(1/L)\ge 1/(4L)\ge 2T/M$
    (where the last inequality uses \cref{eq:I-L}).
    
    The above considerations hold for all $m\in\{0,\dots,M-1\}$, so the event $|\mathcal{X}|<8^{-\dd}k$ can be coupled with a subset of the event that, among $M\cdot8^{-\dd}k$ independent $\on{Bernoulli}(2T/M)$
    trials, at most $8^{-\dd}k$ succeed. 
    By a Chernoff bound (e.g. \cref{thm:chernoff} with $\mu=2\cdot8^{-\dd}kT$ and $\varepsilon=1/2$) this occurs with probability at most
    \[
        \exp\left(-2\cdot 8^{-\dd}kT / 8\right) = \exp\left(-\frac{8^{-\dd}kT}{4}\right)\le\exp\left(-17^{-\dd}kT\right)/2,
    \]
    which finishes the induction step and the proof.
\end{proof}

\begin{remark} \label{remark: improvable}
    In our proof of \cref{lem:inductive}, it was convenient to consider only \emph{disjoint} pairs. Actually, replacing our elementary calculations with a more sophisticated concentration inequality (specifically, \emph{Suen's inequality}, see for example \cite[Theorem~2.22]{RandomGraph00}), we can tolerate some intersections between our pairs, which allows us to consider a larger family of pairs at each step, and therefore slightly improve the bounds in \cref{lem:inductive}.
    Specifically, for $\dd\ge 3$ we can maintain $\Omega(k\log\log n)$ pairs $(i,j) \in X^2_f$ such that $|\on{Box}_{f}(i,j)|$ is small, with the property that 
        every $i$ participates in $O(\log\log n)$ such pairs. With this idea (and some other minor adjustments to the proof), one can show that
    whp $\alpha(H(P))$ is $O(n(\log\log n)^d/(\log n)^{d-1})$ (i.e., one can improve on \cref{eq:alpha} by a factor of $(\log\log n)^{d-2}$). 
    Actually, we can improve this by one additional $\log \log n$ factor: while it is not clear how to incorporate the ideas in \cref{lemma: short intervals d=2} in \emph{all} of the inductive steps, we can at least incorporate these ideas in the $\dd=2$ step of the induction, which would improve the upper bound to $O(n(\log\log n/\log n)^{d-1})$.
    
    Since we believe that this is still sub-optimal for all $d\ge 3$, we omit the details of these improvements.
\end{remark}

\paragraph{\textbf{Acknowledgements.}} 
Jin was supported by SNSF grant 200021-228014.
Kwan was supported by ERC Starting Grant ``RANDSTRUCT'' No. 10107677. Lichev was supported by the Austrian Science Fund (FWF) grant No. 10.55776/ESP624.

\bibliographystyle{plain}
\bibliography{Bib}

\end{document}